\theoremstyle{plain}
\newtheorem{thm}{Theorem}[section]
\newtheorem{lem}[thm]{Lemma}
\newtheorem{cor}[thm]{Corollary}
\newtheorem{rem}[thm]{Remark}
\def\doubleprod#1#2{\ooalign{$#1\prod$\cr$#1\coprod$\cr}}
\DeclareMathOperator*{\Rprod}{\mathpalette\doubleprod\relax}
\def\dsp{ \displaystyle }
\def\rhoco{ \rho_{ \chi_1 } }
\def\rhoct{ \rho_{ \chi_2 } }
\def\rhoca{ \rho_{ \chi_a } }
\def\rhocb{ \rho_{ \chi_b } }
\def\rhocj{ \rho_{ \chi_j } }
\def\rhocobar{ \rho_{ \bar{ \chi_1 } } }
\def\rhoctbar{ \rho_{ \bar{ \chi_2 } } }
\def\rhocabar{ \rho_{ \bar{ \chi_a } } }
\def\rhocbbar{ \rho_{ \bar{ \chi_b } } }
\def\tauco{ \tau_{ \chi_1 } }
\def\taucj{ \tau_{ \chi_j } }
\def\taucobar{ \tau_{ \bar{ \chi_1 } } }
\def\taucjbar{ \tau_{ \bar{ \chi_j } } }
\def\taucjm{ \tau_{ \chi_j }^{ ( 1 ) } }
\def\taucjbarm{ \tau_{ \bar{ \chi_j } }^{ ( 1 ) } }
\def\taucoz{ \tau_{ \chi_1 }^{ (0) } }
\def\tauctz{ \tau_{ \chi_2 }^{ (0) } }
\def\taucaz{ \tau_{ \chi_a }^{ ( 0 ) } }
\def\taucbz{ \tau_{ \chi_b }^{ ( 0 ) } }
\def\taucjz{ \tau_{ \chi_j }^{ ( 0 ) } }
\def\tauoz{ \tau_1^{ ( 0 ) } }
\def\tautz{ \tau_2^{ ( 0 ) } }
\def\taurz{ \tau_r^{ ( 0 ) } }
\def\muoz{ \mu_{ \chi_1 } ( 0 ) }
\def\mutz{ \mu_{ \chi_2 } ( 0 ) }
\def\muaz{ \mu_{ \chi_a } ( 0 ) }
\def\mubz{ \mu_{ \chi_b } ( 0 ) }
\def\mujz{ \mu_{ \chi_j } ( 0 ) }
\def\mutauoz{ \mu_{ \chi_1 } ( \tau_{ \chi_1 }^{ ( 0 ) } ) }
\def\mutautz{ \mu_{ \chi_2 } ( \tau_{ \chi_2 }^{ ( 0 ) } ) }
\def\mutauaz{ \mu_{ \chi_a } ( \tau_{ \chi_a }^{ ( 0 ) } ) }
\def\mutaubz{ \mu_{ \chi_b } ( \tau_{ \chi_b }^{ ( 0 ) } ) }
\def\mutaujz{ \mu_{ \chi_j } ( \tau_{ \chi_j }^{ ( 0 ) } ) }
\begin{document}
\title[Euler prod. of Absolute tensor prod.]{The Euler product expressions of the absolute tensor products of the Dirichlet $ L $-functions}
\author[H.~Tanaka]{Hidenori Tanaka}
\date{}
\subjclass[2010]{Primary~11M06}
\keywords{Dirichlet $ L $-function; Absolute tensor product(Kurokawa tensor product); Euler product}
\begin{abstract}
\noindent
In this paper, we calculate the absolute tensor square of the Dirichlet $ L $-functions and show that it is expressed as an Euler product over pairs of primes. The method is to construct an equation to link primes to a series which has the factors of the absolute tensor product of the Dirichlet $ L $-functions. This study is a generalization of Akatsuka's theorem on the Riemann zeta function, and gives a proof of Kurokawa's prediction proposed in 1992.
\end{abstract}
\maketitle
\section{\textbf{Introduction}} \label{Sec1}
In 1992 Kurokawa \cite{Kur} defined the absolute tensor products (Kurokawa tensor products). The definition is given by
\begin{align*}
( Z_1 \underset{ \mathbb{ F }_1 }{ \otimes } \cdots \underset{ \mathbb{ F }_1 }{ \otimes } Z_r ) ( s ) \coloneqq \Rprod_{ \rho_1 , \cdots , \rho_r \in \mathbb{ C } } (( s - \rho_1 - \cdots - \rho_r ))^{ \mu ( \rho_1 , \cdots , \rho_r ) }
\end{align*}
for some zeta functions $ Z_j ( s ) \, ( j = 1 , \cdots , r ) $, where the symbol $ \Rprod $, which was introduced by Deninger \cite{Den}, represents the zeta regularized product (see below) and the integer $ \mu ( \rho_1 , \cdots , \rho_r ) $ is defined by
\begin{align*}
\mu ( \rho_1 , \cdots , \rho_r ) \coloneqq \mu_1 ( \rho_1 ) \cdots \mu_r ( \rho_r ) \times \begin{cases}
1 & ( \Im ( \rho_1 ) , \cdots , \Im ( \rho_r ) \ge 0 ), \\
( - 1 )^{ r - 1 } & ( \Im ( \rho_1 ) , \cdots , \Im ( \rho_r ) < 0 ), \\
0 & ( \mbox{otherwise} ),
\end{cases}
\end{align*}
where $ \mu_j ( \rho ) $ denotes the order of $ \rho $ which is a zero of $ Z_j ( s ) $; now, we regard the poles of $ Z_j ( s ) $ as the zeros with negative orders in this paper. Here the zeta regularized products are defined by
\begin{align*}
\Rprod_{ n = 1 }^\infty (( s - a_n ))^{ b_n } \coloneqq \exp \left( - \underset{ w = 0 }{ \mathrm{Res} } \frac{ Z_{ \bm{ a }, \bm{ b } } ( w , s ) }{ w^2 } \right)
\end{align*}
where $ \bm{ a } \coloneqq \{ a_n \}_{ n = 1 }^\infty $ and $ \bm{ b } \coloneqq \{ b_n \}_{ n = 1 }^\infty $ are complex sequences such that $ Z_{ \bm{ a } , \bm{ b } } ( w , s ) \coloneqq \sum_{ n = 1 }^\infty b_n ( s - a_n )^{ - w } $ converges locally, uniformly and absolutely in some $ s $-region included in $ \mathbb{ C } - \bm{ a } $ for $ \Re ( w ) > C $ with some constant $ C \in \mathbb{ R }_{ > 0 } $ and is a meromorphic function of $ w $ at $ w = 0 $. If $ \bm{ b } \subset \mathbb{ Z } $ then $ \Rprod_{ n = 1 }^\infty (( s - a_n ))^{ b_n } $ is a meromorphic function of $ s $ in the whole $ \mathbb{ C } $ and has zeros only at $ s = a_n $. The integer $ b_n $ contributes to the order of $ a_n $. See \cite{KurWak1} for more details concerning the zeta regularized products. The factors of the zeta regularized products are derived from the summands of $ Z_{ \bm{ a } , \bm{ b } } ( w , s ) $, so we call $ Z_{ \bm{ a } , \bm{ b } } ( w , s ) $ the ``factors series'' in this paper.

In \cite{Kur} Kurokawa also predicted that the absolute tensor product of $ r $ arithmetic zeta functions which have the expression by the Euler product over primes would have the Euler product over $ r $-tuples $ ( p_1 , \cdots , p_r ) $ of primes. The validity of Kurokawa's prediction has been confirmed in some cases, for example, the cases of the Hasse zeta functions of finite fields by Koyama and Kurokawa \cite{KoyKur} for $ r = 2 $, by Akatsuka \cite{Aka1} for $ r = 3 $ and by Kurokawa and Wakayama \cite{KurWak2} for general $ r $. Also, the case of the Riemann zeta function for $ r = 2 $ was first proved by Koyama and Kurokawa \cite{KoyKur}, and then by Akatsuka \cite{Aka2} in a different way.

In this paper, according to Akatsuka's method in \cite{Aka2}, we will reach the Euler product expression of the absolute tensor product $ (L_{ \chi_1 } \underset{ \mathbb{ F }_1 }{ \otimes } L_{ \chi_2 } ) ( s ) $, where $ L_{ \chi_j } (s) \coloneqq L ( s , \chi_j ) \, ( j \in \mathbb{ Z }_{ >0 } ) $ denotes the Dirichlet $ L $-function corresponding to a primitive Dirichlet character $ \chi_j $ to the modulus $ N_j $ with $ N_j \in \mathbb{ Z }_{ \ge 2 } $. The key item which leads to our goal is an equation which links the ``factors series'' of $ ( L_{ \chi_1 } \underset{ \mathbb{ F }_1 }{ \otimes } \cdots \underset{ \mathbb{ F }_1 }{ \otimes } L_{ \chi_r } ) ( s ) $ to $ r $-tuples of prime numbers (see Theorem 4.1 below). We name such equation the ``key equation''. Letting $ r = 1 $, where $ r $ is a parameter in the ``key equation'', we obtain the zeta regularized product expression of $ L ( s , \chi_1 ) $ :
\begin{thm} \label{Thm1.1}
Let $ \rhoco $ denote the imaginary zeros of $ L ( s , \chi_1 ) $ counted with multiplicity, and let $ \taucoz $ be a possible real number with $ 0 < \taucoz < \frac{ 1 }{ 2 } $ and $ L \left( \frac{ 1 }{ 2 } \pm \taucoz , \chi_1 \right) = 0 $. Then $ L ( s , \chi_1 ) $ has the following expression :
\begin{align}
\begin{split}
L( s , \chi_1 ) = & \Rprod_{ \Im ( \rhoco ) \not = 0 } (( s - \rhoco )) \Rprod_{ n = 1 }^\infty \left( \left( s + 2n - \frac{ 3 + \chi_1 ( -1 ) }{ 2 } \right) \right) \\
&\times \left( s - \frac{ 1 }{ 2 } - \taucoz \right)^{ \mutauoz } \left( s - \frac{ 1 }{ 2 } + \taucoz \right)^{ \mutauoz } \left( s - \frac{ 1 }{ 2 } \right)^{ \muoz },
\end{split} \label{Thm1.1-1}
\end{align}
where $ \mutauoz $ and $ \muoz $ denote the order of $ \frac{ 1 }{ 2 } \pm \taucoz $ and the order of $ \frac{ 1 }{ 2 } $ respectively.
\end{thm}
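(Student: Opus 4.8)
My plan is to read \eqref{Thm1.1-1} as the assertion that the $\zeta$-regularized product over \emph{all} zeros of $L(s,\chi_1)$ reconstructs $L(s,\chi_1)$ itself; this is the $r=1$ instance of the absolute tensor product, in which the weight $\mu(\rhoco)$ collapses to the ordinary order $\mu_{\chi_1}(\rhoco)$ (for $r=1$ the sign $(-1)^{r-1}$ equals $1$ and the ``otherwise'' case cannot occur). Accordingly the four regularized factors on the right of \eqref{Thm1.1-1} partition the zeros into the complex zeros with $\Im(\rhoco)\neq0$, the trivial zeros, the real pair $\tfrac12\pm\taucoz$, and the central point $\tfrac12$. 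I would set up the completed function $\xi(s,\chi_1)=(N_1/\pi)^{(s+a)/2}\,\Gamma\!\big(\tfrac{s+a}{2}\big)\,L(s,\chi_1)$, where $a=\tfrac{1-\chi_1(-1)}{2}\in\{0,1\}$; it is entire of order $1$ and obeys $\xi(s,\chi_1)=W(\chi_1)\,\xi(1-s,\overline{\chi_1})$. The reflection $\rho\mapsto1-\bar\rho$ induced by this functional equation is what forces a real zero $\tfrac12+\taucoz$ to be accompanied by $\tfrac12-\taucoz$ of equal order $\mutauoz$ and fixes $\tfrac12$, which legitimizes the symmetric shape of the last three factors.

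For the trivial factor I would compute the relevant factors series in closed form. With $c=\tfrac{3+\chi_1(-1)}{2}=2-a$ and the reindexing $n\mapsto n-1$,
\[
\sum_{n=1}^\infty\big(s+2n-c\big)^{-w}=2^{-w}\,\zeta_H\!\Big(w,\tfrac{s+a}{2}\Big),
\]
a scaled Hurwitz zeta function. Expanding $2^{-w}\zeta_H(w,v)$ about $w=0$ using $\zeta_H(0,v)=\tfrac12-v$ and Lerch's formula $\tfrac{\partial}{\partial w}\zeta_H(0,v)=\log\Gamma(v)-\tfrac12\log(2\pi)$, I read off the residue of the series divided by $w^2$, and hence
\[
\Rprod_{n=1}^\infty\Big(\Big(s+2n-\tfrac{3+\chi_1(-1)}{2}\Big)\Big)=\frac{2\sqrt{\pi}\,\,2^{-(s+a)/2}}{\Gamma\!\big(\tfrac{s+a}{2}\big)}.
\]
Thus the trivial-zero product delivers precisely the reciprocal $\Gamma$-factor appearing in $\xi$, up to an explicit constant and an exponential in $s$ that will be matched at the end.

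The essential difficulty is the regularized product over the complex zeros. I would form the factors series
\[
Z(w,s)=\sum_{\Im(\rhoco)>0}(s-\rhoco)^{-w}+\sum_{\Im(\rhoco)<0}(s-\rhoco)^{-w},
\]
fixing a branch of $(s-\rhoco)^{-w}$ on each half-plane, and first verify absolute convergence for $\Re(w)>1$ from the Riemann--von Mangoldt count $N(T,\chi_1)=\tfrac{T}{\pi}\log\tfrac{N_1T}{2\pi e}+O(\log T)$. The main obstacle is then the meromorphic continuation of $Z(w,s)$ to a neighborhood of $w=0$ and the evaluation of $-\operatorname{Res}_{w=0}Z(w,s)/w^2$: I would obtain these, following Akatsuka's treatment of $\zeta$ in \cite{Aka2}, by writing $Z(w,s)$ as a contour integral of $(s-z)^{-w}\,(\xi'/\xi)(z,\chi_1)$ around the critical strip, deforming the contour, and controlling the resulting integrals via the standard bounds on $\xi'/\xi$ and the functional equation. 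The computation should identify $\Rprod_{\Im(\rhoco)\neq0}((s-\rhoco))$, together with the three real-zero factors, with the Hadamard product $e^{A_{\chi_1}+B_{\chi_1}s}\prod_\rho(1-s/\rho)e^{s/\rho}=\xi(s,\chi_1)$, again up to one further explicit exponential factor. I expect this continuation and residue computation to be where essentially all of the analytic work resides.

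Finally I would assemble the pieces. Since $L(s,\chi_1)=\xi(s,\chi_1)\,(\pi/N_1)^{(s+a)/2}/\Gamma\!\big(\tfrac{s+a}{2}\big)$, multiplying the complex-zero product (which reproduces $\xi$ up to an exponential) by the trivial-zero product (which reproduces $1/\Gamma(\tfrac{s+a}{2})$ up to an exponential) should return $L(s,\chi_1)$ provided all surviving exponential and constant factors cancel. I would confirm this cancellation by pinning down the Hadamard constant $B_{\chi_1}=-\sum_\rho\Re(1/\rho)$ forced by the functional equation, and combining it with the Lerch constants produced by the trivial factor and with the $(\pi/N_1)^{(s+a)/2}$ prefactor; matching the coefficients of $s$ and the constant terms then yields \eqref{Thm1.1-1}. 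Besides the continuation in the previous step, this bookkeeping of exponential normalizations is the second place where care is genuinely needed, since a single wrong constant or linear term would spoil the identity even though the underlying zero sets already agree.
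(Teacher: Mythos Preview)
Your proposal is plausible but takes a genuinely different route from the paper. You plan to compute each regularized factor separately---the trivial block via a Hurwitz--Lerch identity, the complex block via a contour integral of $(s-z)^{-w}(\xi'/\xi)(z,\chi_1)$ feeding into Hadamard's product---and then reconcile the surviving exponential and constant normalizations against $L(s,\chi_1)=\xi(s,\chi_1)(\pi/N_1)^{(s+a)/2}/\Gamma((s+a)/2)$. The paper instead proves a single identity at the level of the \emph{factors series} (Theorem~\ref{Thm5.4}): the full sum $\sum_{\Im(\rhoco)\neq0}(s-\rhoco)^{-w}+\sum_{n\ge1}(s+2n-\tfrac{3+\chi_1(-1)}{2})^{-w}+\cdots$ is shown to equal $-\Gamma(w)^{-1}\sum_{p,m}\chi_1(p^m)p^{-ms}(m\log p)^{w-1}\log p$. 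This comes not from a contour in the $z$-plane but from the Cram\'er-type function $l_{\chi_1}(t)=\sum_{\Re(\tauco)>0}e^{-\tauco t}$ and its Mellin transform along a rotated ray (the ``key equation'' of Theorem~\ref{Thm4.1} with $r=1$); the prime sum arises as residues at $t=im\log p$. Taking $\exp(-\operatorname{Res}_{w=0}(\cdot/w^2))$ of both sides then gives \eqref{Thm1.1-1} in one stroke, since $1/\Gamma(w)=w+O(w^2)$ collapses the right-hand side directly to the Euler product $\prod_p(1-\chi_1(p)p^{-s})^{-1}$. The payoff of the paper's method is that no Hadamard constant $B_{\chi_1}$, no Lerch constant, and no $(\pi/N_1)^{(s+a)/2}$ bookkeeping ever appears---the very step you flag as ``the second place where care is genuinely needed'' simply does not arise---and the same machinery extends verbatim to $r\ge2$, which is the paper's real target. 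Your approach would work for $r=1$ but does not obviously scale, and the constant-matching you anticipate is real work that the paper's route sidesteps entirely.
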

\begin{rem}
As Theorem \ref{Thm1.1}, define $ \taucjz $ by $ 0 < \taucjz < \frac{ 1 }{ 2 } $ and $ L \left( \frac{ 1 }{ 2 } \pm \taucjz , \chi_j \right) = 0 $ for $ j \in \mathbb{ Z }_{ > 0 } $. It is well known that the orders of $ \frac{ 1 }{ 2 } \pm \taucjz $ are equal and at most one. For convenience, if $ \mutaujz = 0 $ then we define $ \taucjz \coloneqq \frac{ 1 }{ 4 } $.
\end{rem}
Let $ \rho_{ \chi_j } $ denote the imaginary zeros of $ L ( s , \chi_j ) $.  From $ ( \ref{Thm1.1-1} ) $ and the definition of the absolute tensor products, we find that $ ( L_{ \chi_1 } \underset{ \mathbb{ F }_1 }{ \otimes } L_{ \chi_2 } ) ( s ) $ has the following expression :
\begin{align*}
& ( L_{ \chi_1 } \underset{ \mathbb{ F }_1 }{ \otimes } L_{ \chi_2 } ) ( s ) \\
& = \Rprod_{ \Im ( \rhoco ) , \Im ( \rhoct ) < 0 } (( s - \rhoco - \rhoct ))^{ - 1 } \Rprod_{ \Im ( \rhoco ) , \Im ( \rhoct ) > 0 } (( s - \rhoco - \rhoct )) \\
& \quad \times \prod_{ ( a , b ) \in \{ ( 1 , 2 ) , ( 2 , 1 ) \} } \left( \Rprod_{ \Im ( \rhoca ) > 0 , n \ge 1 } \left( \left( s - \rhoca + 2n - \frac{ 3 + \chi_b ( - 1 ) }{ 2 } \right) \right) \right. \\
& \phantom{ \quad \quad \times \prod_{ ( a , b ) \in \{ ( 1 , 2 ) , ( 2 , 1 ) \} } } \times \Rprod_{ \Im ( \rhoca ) > 0 } \left( \left( s - \rhoca - \frac{ 1 }{ 2 } - \taucbz \right) \right)^{ \mutaubz } \\
& \phantom{ \quad \quad \times \prod_{ ( a , b ) \in \{ ( 1 , 2 ) , ( 2 , 1 ) \} } } \times \Rprod_{ \Im ( \rhoca ) > 0 } \left( \left( s - \rhoca - \frac{ 1 }{ 2 } + \taucbz \right) \right)^{ \mutaubz } \\
& \phantom{ \quad \quad \times \prod_{ ( a , b ) \in \{ ( 1 , 2 ) , ( 2 , 1 ) \} } } \times \Rprod_{ \Im ( \rhoca ) > 0 } \left( \left( s - \rhoca - \frac{ 1 }{ 2 } \right) \right)^{ \mubz } \\
& \phantom{ \quad \quad \times \prod_{ ( a , b ) \in \{ ( 1 , 2 ) , ( 2 , 1 ) \} } } \times \Rprod_{ n \ge 1 } \left( \left( s + 2n - 2 - \frac{ \chi_a ( - 1 ) }{ 2 } - \taucbz \right) \right)^{ \mutaubz } \\
& \phantom{ \quad \quad \times \prod_{ ( a , b ) \in \{ ( 1 , 2 ) , ( 2 , 1 ) \} } } \times \Rprod_{ n \ge 1 } \left( \left( s + 2n - 2 - \frac{ \chi_a ( - 1 ) }{ 2 } + \taucbz \right) \right)^{ \mutaubz } \\
& \phantom{ \quad \quad \times \prod_{ ( a , b ) \in \{ ( 1 , 2 ) , ( 2 , 1 ) \} } } \times \left. \Rprod_{ n \ge 1 } \left( \left( s + 2n - 2 - \frac{ \chi_a ( - 1 ) }{ 2 } \right) \right)^{ \mubz } \right) \\
& \quad \times \Rprod_{ n_1 , n_2 \ge 1 } \left( \left( s + 2 n_1 + 2 n_2 - 3 - \frac{ \chi_1 ( -1 ) + \chi_2 (-1) }{ 2 } \right) \right) \\
& \quad \times \left( ( s - 1 - \taucoz - \tauctz ) ( s - 1 - \taucoz + \tauctz ) \right. \\
& \quad \quad \times \left. ( s - 1 + \taucoz - \tauctz ) ( s - 1 + \taucoz + \tauctz ) \right)^{ \mutauoz \mutautz } \\
& \quad \times \prod_{ ( a , b ) \in \{ ( 1 , 2 ) , ( 2 , 1 ) \} } \left( ( s - 1 - \taucaz )( s - 1 + \taucaz ) \right)^{ \mutauaz \mubz } \\
& \quad \times ( s - 1 )^{ \muoz \mutz }.
\end{align*}
Now, let $ p, \, q $ be primes and $ j, \, m, \, n $ be positive integers, and let $ \alpha $ be any fixed number with $ 0 < \alpha < 1 $. For the complex numbers $ \taucj $ with $ \rhocj = \frac{ 1 }{ 2 } + i \taucj $, we define $ \taucjm \coloneqq \min \{ \Re ( \taucj ) > 0 \} $; we fix $ \varepsilon_j $ arbitrarily with $ 0 < \varepsilon_j < \min \{ \taucjm , \taucjbarm \} $. Also, we define $ \varepsilon^{ ( r ) } \coloneqq \underset{ j \in \{ 1 , \cdots , r \} }{ \min } \{ \varepsilon_j \} $. Define that
\begin{align*}
&E_1 ( w , s , \{ \chi_j \}_{ j =1 }^2 ) \\
& \coloneqq - \frac{ i ( w + 1 ) }{ 2 \pi } \sum_{ p } \sum_{ m = 1 }^\infty \chi_1 ( p^m ) \chi_2 ( p^m ) p^{ - ms } ( m \log p )^{ w - 2 } ( \log p )^2 \\
& \quad + \frac{ i ( s - 2 ) }{ 2 \pi } \sum_{ p } \sum_{ m = 1 }^\infty \chi_1 ( p^m ) \chi_2 ( p^m ) p^{ - ms } ( m \log p )^{ w - 1 } ( \log p )^2, \\%
&E_2 ( w , s , \{ \chi_j \}_{ j = 1 }^2 ) \\
& \coloneqq - \frac{ i }{ 2 \pi } \sum_{ ( a , b ) \in \{ ( 1 , 2 ) , ( 2 , 1 ) \} } \sum_{ \scriptsize{ \begin{array}{c}
p , m , q , n \\
p^m \not = q^n
\end{array} } } \frac{ \chi_a ( p^m ) \chi_b ( q^n ) p^{ -m ( s - 1 ) } q^{ - n } ( m \log p )^w \log p }{ n ( m \log p - n \log q ) }, \\
&E_3 ( w , s , \{ \chi_j \}_{ j = 1 }^2 ) \\
& \coloneqq \frac{ 1 }{ 2 \pi } \sum_{ ( a , b ) \in \{ ( 1 , 2 ) , ( 2 , 1 ) \} } \sum_{ p , m , q , n } \frac{ \chi_a ( p^m ) \bar{ \chi_b } ( q^n ) p^{ - m ( s + \alpha ) } q^{ - n ( 1 + \alpha ) } }{ n ( m \log p + n \log q ) } ( m \log p )^w \log p, \\
&E_4 ( w , s , \{ \chi_j \}_{ j=1 }^2 ) \\
& \coloneqq - \frac{ 1 }{ 2 \pi } \sum_{ ( a , b ) \in \{ ( 1 , 2 ) , ( 2 , 1 ) \} } \sum_{ p , m , n } \frac{ \chi_a ( - 1 ) \chi_b ( p^m ) p^{ - m ( s + \alpha ) } q^{ - \alpha n \pi i } }{ n ( im \log p - n \pi ) } ( m \log p )^{ w - 2 } \log p, \\
&E_5 ( w , s , \{ \chi_j \}_{ j = 1 }^2 ) \\
& \coloneqq \frac{ i }{ 2 } \sum_{ ( a , b ) \in \{ ( 1 , 2 ) , ( 2 , 1 ) \} } \sum_{ p , m } \frac{ \chi_a ( p^m ) p^{ -m \left( s - \frac{ 1 - \chi_b ( -1 ) }{ 2 } \right) } }{ \sin ( im \log p ) } ( m \log p )^{ w - 1 } \log p, \\
&E_6 ( w , s , \{ \chi_j \}_{ j = 1 }^2 ) \\
& \coloneqq - \frac{ i }{ 2 \pi } \sum_{ ( a , b ) \in \{ ( 1 , 2 ) , ( 2 , 1 ) \} } \int_{ S^{ ( 2 ) } ( \pi \to 0 ) } \sum_{ p , m } \chi_a ( p^m ) p^{ - m ( s - u ) } ( m \log p )^w ( \log p ) \\
& \phantom{ \coloneqq - \frac{ i }{ 2 \pi } \sum_{ ( a , b ) \in \{ ( 1 , 2 ) , ( 2 , 1 ) \} } \int_{ S^{ ( 2 ) } ( \pi \to 0 ) } \sum_{ p , m } \chi_a ( p^m ) p^{ - m ( s - u ) } \qquad } \times \log L ( u , \chi_b ) du, \\
&E_7 ( w , s , \{ \chi_j \}_{ j = 1 }^2 ) \\
& \coloneqq \sum_{ ( a , b ) \in \{ ( 1 , 2 ) , ( 2 , 1 ) \} } \frac{ 1 }{ 2 \pi } \left( \log \left( \frac{ \chi_a ( -1 ) \Gamma ( 1 + \alpha ) N_a^\alpha G( \chi_a ) }{ ( 2 \pi )^{ 1 + \alpha } } \right) + \gamma + \log \left( \frac{ 2 \pi }{ N_a } + \frac{ \pi i }{ 2 } \right) \right) \\
& \phantom{ \log \left( \frac{ 2 \pi }{ N_a } + \frac{ \pi i }{ 2 } \right) } \times \sum_{ p , m } \chi_b ( p^m ) p^{ -m ( s + \alpha ) } ( m \log p )^{ w - 2 } \log p \\
& \quad + \sum_{ a = 1 }^2 \left( - \frac{ 1 + \alpha }{ 4 } \sum_{ p , m } \chi_a ( p^m ) p^{ - m ( s + \alpha ) } ( m \log p )^{ w - 1 } \log p \right. \\
& \quad \phantom{ - \frac{ 1 + \alpha }{ 4 } } + \frac{ i }{ 2 \pi } \sum_{ p , m } \chi_a ( p^m ) p^{ - m ( s + \alpha ) } ( m \log p )^{ w - 1 } \log p \\
& \quad \left. \phantom{ - \frac{ 1 + \alpha }{ 4 } } \times \int_0^\infty \frac{ 1 }{ e^u - 1 } \cdot \frac{ u + m ( \log p ) ( 1 - e^{ - \alpha u } ) }{ u + m \log p } du \right), \\
&E_8 ( w , s , \{ \chi_j \}_{ j = 1 }^2 ) \\
& \coloneqq \sum_{ ( a , b ) \in \{ ( 1 , 2 ) , ( 2 , 1 ) \} } \mutauaz \sum_{ p , m } \chi_b ( p^m ) p^{ - m \left( s - \frac{ 1 }{ 2 } - \taucaz \right) } ( m \log p )^{ w - 1 } \log p, \\
&E_9 ( w , s , \{ \chi_j \}_{ j = 1 }^2 ) \\
& \coloneqq \sum_{ ( a , b ) \in \{ ( 1 , 2 ) , ( 2 , 1 ) \} } \mutauaz \sum_{ p , m } \chi_b ( p^m ) p^{ - m \left( s - \frac{ 1 }{ 2 } + \taucaz \right) } ( m \log p )^{ w - 1 } \log p, \\
&E_{ 10 } ( w , s , \{ \chi_j \}_{ j = 1 }^2 ) \\
& \coloneqq \sum_{ ( a , b ) \in \{ ( 1 , 2 ) , ( 2 , 1 ) \} } \muaz \sum_{ p , m } \chi_b ( p^m ) p^{ - m \left( s - \frac{ 1 }{ 2 } \right) } ( m \log p )^{ w - 1 } \log p,
\end{align*}
where $ S^{ ( r ) } : = \left\{ \frac{ 1 + \alpha }{ 2 } \cos \varphi + i \varepsilon^{ ( r ) } \sin \varphi + \frac{ 1 - \alpha }{ 2 } \mid 0 \le \varphi \le \pi \right\} $ and $ \gamma $, $ \Gamma ( s ) $ and $ G ( \chi_j ) $ denote the Euler constant, the gamma function and the Gauss sum respectively, that is,
\begin{align*}
& \gamma \coloneqq \lim_{ K \to \infty } \left( \sum_{ k = 1 }^K \frac{ 1 }{ k } - \log K \right), \\
& \Gamma ( s ) \coloneqq \dsp{ \int_0^\infty e^{ - t } t^{ s - 1 } dt } \quad ( \Re ( s ) > 0 ), \\
& G ( \chi_j ) \coloneqq \sum_{ n = 1 }^{ N_j } \chi_j ( n ) e^{ \frac{ 2 \pi i }{ N_j } n }.
\end{align*}
Then, letting  $ r = 2 $ in the ``key equation'', we can deduce the Euler product expression of $ ( L_{ \chi_1 } \underset{ \mathbb{ F }_1 }{ \otimes } L_{ \chi_2 } ) ( s ) $ as follows :
\begin{thm} \label{Thm1.3}
In $ \Re ( s ) > 2 $ we have
\begin{align*}
( L_{ \chi_1 } \underset{ \mathbb{ F }_1 }{ \otimes } L_{ \chi_2 } ) ( s ) = \exp \left( \sum_{ k = 1 }^{ 10 } E_k ( s , \{ \chi_j \}_{ j = 1 }^2 ) \right) ,
\end{align*}
where $ E_k ( s , \{ \chi_j \}_{ j = 1 }^2 ) \coloneqq E_k ( 0 , s , \{ \chi_j \}_{ j = 1 }^2 ) $, that is,
\begin{align*}
&E_1 ( s , \{ \chi_j \}_{ j = 1 }^2 ) \\
& \coloneqq - \frac{ i }{ 2 \pi } \sum_{ p , m } \frac{ \chi_1 ( p^m ) \chi_2 ( p^m ) p^{ - ms } }{ m^2 } + \frac{ i ( s - 2 ) }{ 2 \pi } \sum_{ p , m } \frac{ \chi_1 ( p^m ) \chi_2 ( p^m ) p^{ - ms } \log p }{ m }, \\
&E_2 ( s , \{ \chi_j \}_{ j = 1 }^2 ) \\
& \coloneqq - \frac{ i }{ 2 \pi } \sum_{ ( a , b ) \in \{ ( 1 , 2 ) , ( 2 , 1 ) \} } \sum_{ \scriptsize{ \begin{array}{c}
p , m , q , n \\
p^m \not = q^n
\end{array} } } \frac{ \chi_a ( p^m ) \chi_b ( q^n ) p^{ - m ( s - 1 ) } q^{ -n } \log p }{ n ( m \log p - n \log q ) }, \\
&E_3 ( s , \{ \chi_j \}_{ j = 1 }^2 ) \\
& \coloneqq \frac{ 1 }{ 2 \pi } \sum_{ ( a , b ) \in \{ ( 1 , 2 ) , ( 2 , 1 ) \} } \sum_{ p , m , q , n } \frac{ \chi_a ( p^m ) \chi_b ( q^n ) p^{ - m ( s + \alpha )} q^{ - n ( 1 + \alpha ) } }{ n ( m \log p + n \log q ) } \log q, \\
&E_4 ( s , \{ \chi_j \}_{ j = 1 }^2 ) \\
& \coloneqq - \frac{ 1 }{ 2 \pi } \sum_{ ( a , b ) \in \{ ( 1 , 2 ) , ( 2 , 1 ) \} } \sum_{ p , m , n } \frac{ \chi_a ( -1 ) \chi_b ( p^m ) p^{ - m ( s + \alpha ) } e^{ - i \alpha n \pi } }{ m^2 n ( i m \log p - n \pi ) \log p }, \\
&E_5 ( s , \{ \chi_j \}_{ j = 1 }^2 ) \\
& \coloneqq \frac{ i }{ 2 } \sum_{ ( a , b ) \in \{ ( 1 , 2 ) , ( 2 , 1 ) \} } \sum_{ p , m } \frac{ \chi_a ( p^m ) p^{ - m \left( s - \frac{ 1 - \chi_b ( -1 ) }{ 2 } \right) } }{ m \sin ( i m \log p ) }, \\
&E_6 ( s , \{ \chi_j \}_{ j = 1 }^2 ) \\
& \coloneqq - \frac{ i }{ 2 \pi } \sum_{ ( a , b ) \in \{ ( 1 , 2 ) , ( 2 , 1 ) \} } \int_{ S^{ ( 2 ) } ( \pi \to 0 ) } \sum_{ p , m } p^{ - m ( s - u ) } ( \log p ) \chi_a ( p^m ) \log L ( u , \chi_b ) du, \\
&E_7 ( s , \{ \chi_j \}_{ j = 1 }^2 ) \\
& \coloneqq \frac{ 1 }{ 2 \pi } \sum_{ ( a , b ) \in \{ ( 1 , 2 ) , ( 2 , 1 ) \} } \left( \log \left( \frac{ \chi_a ( -1 ) \Gamma ( 1 + \alpha ) N_a^\alpha G ( \chi_a ) }{ ( 2 \pi )^{ 1 + \alpha } } \right) + \gamma + \log \left( \frac{ 2 \pi }{ N_a } + \frac{ \pi i }{ 2 } \right) \right) \\
& \phantom{ \frac{ 1 }{ 2 \pi } \sum_{ ( a , b ) \in \{ ( 1 , 2 ) , ( 2 , 1 ) \} } } \times \sum_{ p , m } \frac{ \chi_b ( p^m ) p^{ - m ( s + \alpha ) } }{ m^2 \log p } \\
& \quad + \sum_{ a = 1 }^2 \left( - \frac{ 1 + \alpha }{ 4 } \sum_{ p , m } \frac{ \chi_a ( p^m ) p^{ - m ( s + \alpha ) } }{ m } \right. \\
& \quad \left. \phantom{ + \sum_{ a = 1 }^2 } + \frac{ i }{ 2 \pi } \sum_{ p , m } \frac{ \chi_a ( p^m ) p^{ - m ( s + \alpha ) } }{ m^2 \log p } \int_0^\infty \frac{ 1 }{ e^u - 1 } \cdot \frac{ u + m ( \log p ) ( 1 - e^{ - \alpha u } ) }{ u + m \log p } du \right), \\
&E_8 ( s , \{ \chi_j \}_{ j = 1 }^2 ) \coloneqq \sum_{ ( a , b ) \in \{ ( 1 , 2 ) , ( 2 , 1 ) \} } \mutauaz \sum_{ p , m } \frac{ \chi_b ( p^m ) }{ m } p^{ - m \left( s - \frac{ 1 }{ 2 } - \taucaz \right) }, \\
&E_9 ( s , \{ \chi_j \}_{ j = 1 }^2 ) \coloneqq \sum_{ ( a , b ) \in \{ ( 1 , 2 ) , ( 2 , 1 ) \} } \mutauaz \sum_{ p , m } \frac{ \chi_b ( p^m ) }{ m } p^{ - m \left( s - \frac{ 1 }{ 2 } + \taucaz \right) }, \\
&E_{ 10 } ( s , \{ \chi_j \}_{ j = 1 }^2 ) \coloneqq \sum_{ ( a , b ) \in \{ ( 1 , 2 ) , ( 2 , 1 ) \} } \muaz \sum_{ p , m } \frac{ \chi_b ( p^m ) }{ m } p^{ - m \left( s - \frac{ 1 }{ 2 } \right) }.
\end{align*}
\end{thm}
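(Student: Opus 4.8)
The plan is to take the zeta-regularized product expression of $(L_{\chi_1}\underset{\mathbb F_1}{\otimes}L_{\chi_2})(s)$ displayed immediately before the statement as the starting point and to pass to logarithms. By the definition of $\Rprod$, the logarithm of each constituent regularized factor equals $-\underset{w=0}{\mathrm{Res}}\,w^{-2}Z_{\bm a,\bm b}(w,s)$, where $Z_{\bm a,\bm b}(w,s)=\sum_n b_n(s-a_n)^{-w}$ is its factors series; summing over all constituents, the theorem reduces to computing $-\underset{w=0}{\mathrm{Res}}\,w^{-2}Z(w,s)$ for the total factors series $Z(w,s)$ and identifying the outcome, in $\Re(s)>2$, with $\sum_{k=1}^{10}E_k(0,s)$.

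Following Akatsuka's method in \cite{Aka2}, the central task is to convert the sums over zeros inside $Z(w,s)$ into sums over primes. For the factors running over the nontrivial zeros $\rhoca$ I would use the contour representation $\sum_{\Im(\rhoca)>0}g(\rhoca)=\frac1{2\pi i}\oint g(z)\frac{L'}{L}(z,\chi_a)\,dz$ on a suitable rectangle, insert the Euler-product logarithmic derivative $\frac{L'}{L}(z,\chi_a)=-\sum_{p,m}\chi_a(p^m)(\log p)p^{-mz}$, and deform the contour through the critical strip along the arc $S^{(2)}$. The inner sums over the trivial-zero progressions $2n-\frac{3+\chi_b(-1)}2$ and over $n_1,n_2$ are evaluated in closed form, which is the source of the $\sin(im\log p)$ denominators, while the functional equation and the archimedean Euler factor bring in the constants $\gamma$, $G(\chi_a)$, $N_a$ and $\Gamma(1+\alpha)$. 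This conversion is precisely the content of the ``key equation'' (Theorem 4.1) specialized to $r=2$.

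Once $Z(w,s)$ is in prime-sum form, the ten functions should separate by provenance: $E_1$ from the diagonal terms $p^m=q^n$ of the double zero-sum; $E_2$ from the off-diagonal terms $p^m\ne q^n$; $E_3$ and $E_4$ from the reflected ($\bar\chi_b$, $e^{-i\alpha n\pi}$) contributions forced by the functional equation; $E_5$ from the trivial-zero progressions; $E_6$ from the residual $\log L(u,\chi_b)$ integral along $S^{(2)}$; $E_7$ from the archimedean factor together with the constants above; and $E_8,E_9,E_{10}$ from the real zeros $\frac12\pm\taucaz$ and the central point $\frac12$, whose factors series are single Dirichlet series in the primes. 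As a consistency check, summing $E_{10}$ by means of $\sum_{p,m}\frac{\chi_b(p^m)}{m}p^{-mz}=\log L(z,\chi_b)$ reassembles the clean factor $\prod_{(a,b)}L(s-\frac12,\chi_b)^{\muaz}$, confirming that the matching is correct; taking $-\underset{w=0}{\mathrm{Res}}\,w^{-2}(\cdot)$ of each piece then yields exactly $E_k(0,s)=E_k(s)$.

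The hard part will be the analytic justification rather than the bookkeeping. The sums over zeros are only conditionally convergent and must be controlled uniformly in $w$ on a punctured neighbourhood of $w=0$, and the interchange of the zero-sum with the prime-sum together with the contour deformation across the critical strip must be shown both to be meromorphic at $w=0$ and to create no spurious residues; in particular one must check that the off-diagonal double sum defining $E_2$ and the $\log L$ contour integral defining $E_6$ genuinely continue to $w=0$. One must also verify that $\Re(s)>2$ is exactly the region of absolute convergence of all ten series, and that the final sum $\sum_k E_k(s)$ is independent of the auxiliary parameter $\alpha$ (which appears individually in $E_3,E_4,E_7$ but cannot survive in the left-hand side), so that $\exp(\sum_k E_k(s))$ is well defined and equals the regularized product.
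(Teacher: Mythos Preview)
Your architecture is correct and matches the paper: the proof in Section~6.2 applies $-\operatorname*{Res}_{w=0}w^{-2}(\cdot)$ to both sides of the $r=2$ key equation (Theorem~6.4, which is Theorem~4.1 made explicit via Lemmas~6.1--6.3), and uses $\tfrac{1}{\Gamma(w)}=w+O(w^2)$ to read off $\sum_k E_k(0,s)$ from the right-hand side. Your identification of the provenance of each $E_k$ is also accurate.

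The one point where your sketch diverges from the paper is the mechanism for turning the \emph{double} zero-sum $\sum_{\rho_1,\rho_2}(s-\rho_1-\rho_2)^{-w}$ into primes. A contour integral against $\tfrac{L'}{L}(z,\chi_a)$ handles one zero-sum at a time but does not directly produce a bilinear one; iterating it would give a double contour whose analytic control near $w=0$ is delicate. The paper's device is instead the Cram\'er-type series $l_{\chi_j}(t)=\sum_{\Re\tau_{\chi_j}>0}e^{-\tau_{\chi_j}t}$ of Section~3: the product $l_{\chi_1}(t)\,l_{\chi_2}(t)$ automatically encodes $\sum_{\tau_1,\tau_2}e^{-(\tau_1+\tau_2)t}$, its Mellin transform $\tfrac{1}{\Gamma(w)}\int_0^{\infty e^{-i\theta}}e^{-zt}l_{\chi_1}(t)l_{\chi_2}(t)\,t^{w-1}\,dt$ is exactly the double factors series (Lemma~6.1), and the poles of $l_{\chi_j}$ at $t=im\log p$ (Theorem~3.2(iv)) yield the prime sums $E_k$ on the right (Lemma~6.2). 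The meromorphy at $w=0$ that you flag as the hard analytic point is then handled not term by term but globally for $L_\theta^{(1)}+L_\theta^{(2)}$ via Lemma~4.2 and Corollary~4.3, using the asymptotics of $l_{\chi_j}(t)$ at $t=0$ from Theorem~3.2(iii). This packaging is precisely what makes the $r\ge2$ case tractable and is the substance behind the Theorem~4.1 you invoke.
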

The proofs of Theorem \ref{Thm1.1} and Theorem \ref{Thm1.3} are given in Section 5 and Section 6 respectively. The contents of the other sections are as follows. In Section 2 some lemmas are proved which are made use of in Section 3 or later. In Section 3 a series is introduced which includes information on the zeros of the Dirichlet $ L $-functions and some properties of the series is shown. In Section 4 the ``key equation'' is deduced.
\section*{\textbf{Acknowledgements}}
\noindent
I really thank Shin-ya Koyama for his giving me the opportunity to study this theme and much useful advice and Ki-ichiro Hashimoto for his special support. I also thank Hirotaka Akatsuka for his showing me the beneficial information for this study.
\section{\textbf{Lemmas}} \label{Sec2}
In this section, we prove some lemmas which are used later. 
\begin{lem} \label{Lem2.1}
Let $ c \in \mathbb{ C } - \{ 0 \} $ and $ \delta \in \mathbb{ R }_{ > 0 } $ be any fixed numbers. \\
{\upshape (i)} Suppose that $ f( u ) $ satisfies $ f( u ) = O( 1 ) \, ( u \to 0 ) $,  $ O( u^{ - \delta } ) \, ( u \to \infty ) $ and is holomorphic on $ \mathbb{ C } - \{ 0 \} $. Define
\begin{align*}
F_1( z ) \coloneqq \int_0^\infty \frac{ f( u ) }{ u - c z } du \quad ( \Im ( c z ) < 0 ).
\end{align*}
Then $ F_1 ( z ) + f( c z ) \log z $ is a single-valued meromorphic function of $ z $ on $ \mathbb{ C } - \{ 0 \} $. \\
{\upshape (ii)} Suppose that $ f( u ) $ satisfies $ f( u ) = O( 1 ) \, ( u \to 0 ) $, $ O( u^{ 1 - \delta } ) \, ( u \to \infty ) $ and is holomorphic on $ \mathbb{ C } - \{ 0 \} $. Define
\begin{align*}
F_2 ( z ) \coloneqq \int_0^\infty \frac{ f( u ) }{ u^2 - ( c z )^2 } du \quad ( \Im ( c z ) < 0 ).
\end{align*}
Then $ F_2 ( z ) + \frac{ f( c z ) - f( - c z ) }{ 2 c z } \log z $ is a single-valued meromorphic function of $ z $ on $ \mathbb{ C } - \{ 0 \} $.
\end{lem}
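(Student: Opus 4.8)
The plan is to treat $F_1$ and $F_2$ as functions that are already holomorphic off a system of rays and to show that the stated correction terms exactly cancel the monodromy around the origin. As a first reduction, writing $w=cz$ and $\log z=\log w-\log c$, the term $f(cz)\log z$ differs from $f(w)\log w$ by the single-valued function $-f(w)\log c$, and likewise in (ii) the coefficient times $\log c$ is single-valued because $f$ is holomorphic on $\mathbb{C}-\{0\}$; since $w\mapsto cz$ is a biholomorphism of $\mathbb{C}-\{0\}$, it therefore suffices to argue for $c=1$, with the variable $w=cz$ winding once counterclockwise around the origin.

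For part (i), I would first observe that although $F_1$ is only defined for $\Im(cz)<0$, the integral in fact converges and is holomorphic on the whole slit plane $\{cz\notin[0,\infty)\}$: near $u=0$ the integrand is $O(1)$, near $u=\infty$ it is $O(u^{-1-\delta})$ by $f(u)=O(u^{-\delta})$, and $u-cz$ does not vanish on $[0,\infty)$ while $cz\notin[0,\infty)$. Thus $F_1$ has a single branch cut along the ray $cz\in[0,\infty)$, and the only obstruction to single-valuedness on $\mathbb{C}-\{0\}$ is the monodromy as $cz$ circles the origin once. During such a loop the pole $u=cz$ traverses a circle about the origin and crosses the contour $[0,\infty)$ exactly once, from below to above; by a contour-deformation (residue) argument --- equivalently the Sokhotski--Plemelj jump formula --- this crossing changes $F_1$ by $-2\pi i\,\mathrm{Res}_{u=cz}\frac{f(u)}{u-cz}=-2\pi i\,f(cz)$. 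Since $f(cz)\log z$ acquires $+2\pi i\,f(cz)$ under the same loop (as $\log z\mapsto\log z+2\pi i$), the sum has trivial monodromy; being holomorphic off the cut and continuous across it, it extends holomorphically by Morera, leaving only $z=0$ excluded, so it is single-valued and meromorphic on $\mathbb{C}-\{0\}$.

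Part (ii) follows the same template, the new feature being that $\frac{f(u)}{u^2-(cz)^2}$ has two poles $u=\pm cz$, so $F_2$ is holomorphic on $\{cz\notin\mathbb{R}\}$ (the weaker decay $f(u)=O(u^{1-\delta})$ now suffices, the integrand being $O(u^{-1-\delta})$ at infinity) and is cut along both the positive and negative $cz$-rays. As $w=cz$ winds once around the origin, the pole $u=cz$ crosses $[0,\infty)$ once (when $\arg w$ passes $0$) and $u=-cz$ crosses it once (when $\arg w$ passes $\pi$); checking that both crossings are upward, each contributes $-2\pi i$ times its residue, so the monodromy of $F_2$ is
\[
-2\pi i\left(\mathrm{Res}_{u=cz}+\mathrm{Res}_{u=-cz}\right)\frac{f(u)}{u^2-(cz)^2}=-2\pi i\cdot\frac{f(cz)-f(-cz)}{2cz},
\]
which is exactly cancelled by the $+2\pi i\cdot\frac{f(cz)-f(-cz)}{2cz}$ picked up by the correction term under $\log z\mapsto\log z+2\pi i$. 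The even symmetry in $z$ of the coefficient $\frac{f(cz)-f(-cz)}{2cz}$ is precisely what lets a single logarithm absorb the jumps across both rays at once, and vanishing monodromy together with Morera again yields a single-valued meromorphic function on $\mathbb{C}-\{0\}$.

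I expect the main obstacle to be the sign and orientation bookkeeping in the monodromy computation --- fixing the direction of each contour crossing and confirming that every crossing contributes $-2\pi i\,\mathrm{Res}$ rather than $+2\pi i\,\mathrm{Res}$ --- and, in part (ii), verifying that the two residue contributions combine into exactly the coefficient $\frac{f(cz)-f(-cz)}{2cz}$ of $\log z$, so that one logarithm accounts for both branch cuts simultaneously.
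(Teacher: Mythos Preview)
Your proposal is correct and follows essentially the same approach as the paper: both arguments compute the monodromy of $F_1$ (resp.\ $F_2$) around the origin by tracking how the pole $u=cz$ (resp.\ the poles $u=\pm cz$) crosses the contour $[0,\infty)$, obtaining a change of $-2\pi i f(cz)$ (resp.\ $-2\pi i\cdot\frac{f(cz)-f(-cz)}{2cz}$), and then observe that this is exactly cancelled by the monodromy of the $\log z$ correction term. The only cosmetic differences are that the paper carries out the contour deformation explicitly (defining auxiliary paths and invoking Cauchy's theorem) rather than citing Sokhotski--Plemelj by name, and does not make the preliminary reduction to $c=1$.
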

\begin{proof}[\underline{\textbf{Proof of Lemma \ref{Lem2.1}}}]
(i) If $ cz $ is in the fourth quadrant, then by Cauchy's theorem we have
\begin{align}
\lim_{ X \to \infty } \int_{ P_1 \cup P_2 } \frac{ f ( u ) }{ u - cz } du = 0, \label{PofLem2.1-1}
\end{align}
where
\begin{align*}
P_1 & \coloneqq \{ u \in \mathbb{ R } \, | \, 0 \le u \le X \}, \\
P_2 & \coloneqq \left\{ X e^{ i \varphi } \, \left| \, 0 \le \varphi \le \frac{ 3 \pi }{ 2 } \right. \right\} \cup \{ u \in \mathbb{ C } \, | \, \Re ( u ) = 0 , \, - X \le \Im ( u ) \le 0 \}
\end{align*}
for $ X \in \mathbb{ R }_{ >0 } $ and we go around the integral path in the counterclockwise direction. It follows from $ ( \ref{PofLem2.1-1} ) $ that
\begin{align}
F_1 ( z ) = - \lim_{ X \to \infty } \int_{ P_2 } \frac{ f ( u ) }{ u - cz } du. \label{PofLem2.1-2}
\end{align}
Since the integral path in the right-hand side of $ ( \ref{PofLem2.1-2} ) $ doesn't include the positive real axis, $ ( \ref{PofLem2.1-2} ) $ remains holomorphic while $ cz $ moving from the fourth quadrant into the first one across that axis. Therefore, $ ( \ref{PofLem2.1-2} ) $ gives the analytic continuation of $ F_1 ( z ) $ with $ cz $ in the first quadrant. On the other hand, when $ cz $ is in the first quadrant, by Cauchy's theorem we have
\begin{align*}
\lim_{ X \to \infty } \int_{ P_1 \cup P_2 } \frac{ f ( u ) }{ u - c z } du = 2 \pi i f ( c z ).
\end{align*}
From this and $ ( \ref{PofLem2.1-2} ) $ it follows that
\begin{align}
F_1 ( z ) = - 2 \pi i f( cz ) + \int_0^\infty \frac{ f ( u ) }{ u - cz } du. \label{PofLem2.1-3}
\end{align}
The right-hand side of $ ( \ref{PofLem2.1-3} ) $ is a holomorphic function of $ z $ if $ cz $ isn't on the non-negative real axis, so we find that $ F_1 ( z ) $ changes by $ - 2 \pi i f ( cz ) $ when $ cz $ moves counterclockwise around the origin, making one complete circuit. Therefore $ F_1 ( z ) + f ( cz ) \log z $ is unchanged by the analytic continuation around the origin，so a single-valued on $ \mathbb{ C } - \{ 0 \} $. \\
(ii)　When $ cz $ moves counterclockwise around the origin, the poles $ u = cz $ and $ u = - cz $ of the integrand of $ F_2 ( z ) $ cross the positive real axis by $ cz $ being moved from the fourth quadrant to the first one and by from the second one to the third one respectively. Therefore, we can show (ii) in a similar way as (i).
\end{proof}
\begin{rem} \label{Rem2.2}
If $ \Im ( cz ) > 0 $ then we have
\begin{align*}
F_2 ( z ) = - \pi i \frac{ h ( c z ) }{ c z } + \int_0^\infty \frac{ h ( u ) }{ u^2 - ( c z )^2 } du.
\end{align*}
We use this in the proof of Lemma \ref{Lem2.6}.
\end{rem}
Define that
\begin{alignat}{3}
& H ( t ) && \coloneqq \frac{ 1 }{ t } \int_0^\infty \frac{ 1 }{ e^u - 1 } \cdot \frac{ u - i t ( 1 - e^{ - \alpha u } ) }{ u - i t } du & \quad & ( \Re ( t ) < 0 ), \label{HDef} \\
& I_j ( t ) && \coloneqq \frac{ 1 }{ t } \int_0^\infty \frac{ u e^{ \frac{ 1 + \chi_j ( -1 ) }{ 4 } u } \left( \frac{ u }{ 2 } \cos \frac{ t }{ 2 } - t \sin \frac{ t }{ 2 } \right) }{ ( e^u - 1 )( u^2 + 4 t^2 ) } du && ( \Re ( t ) > 0 ), \label{IDef} \\
& J_j ( t ) && \coloneqq I_j ( t ) + \frac{ \log t }{ 4 \sin \frac{ t }{ 2 } } \label{JDef}
\end{alignat}
for $ j \in \mathbb{ Z }_{ >0 } $. For these functions, we show the following two lemmas : Lemma 2.4 and Lemma 2.6.
\begin{rem} \label{Rem2.3}
In the following, it is found that $ H ( t ) $, $ I_j ( t ) $ and $ J_j ( t ) $ has the analytic continuations, and let the same symbols denote those continuations respectively.
\end{rem}
\begin{lem} \label{Lem2.4}
{\upshape (i)} $ H ( t ) $ has the following asymptotic behavior at $ t = 0 $ :
\begin{align*}
H ( t ) = - \frac{ e^{ - i ( \alpha + \frac{ 1 }{ 2 } ) t } }{ 2  \sin \frac{ t }{ 2 } } \log t + O ( 1 ).
\end{align*}
{\upshape (ii)} $ H ( t ) + \frac{ e^{ - i ( \alpha + \frac{ 1 }{ 2 } ) t } }{ 2  \sin \frac{ t }{ 2 } } \log t $ is a single-valued meromorphic function on $ t \in \mathbb{ C } $. \\
{\upshape (iii)} $ H ( t ) $ has the simple pole at $ t = 2 n \pi \ ( n \in \mathbb{ Z } - \{ 0 \} ) $ with residue
\begin{align*}
\omega_n e^{ - \omega_n \cdot 2 \alpha \pi i } \left( \arg t - \frac{ 1 - \omega_n }{ 2 } \right),
\end{align*}
where $ \omega_n : = \frac{ n }{ | n | } $.
\end{lem}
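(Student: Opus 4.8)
The plan is to recognize $H$ as a Cauchy-type integral to which the mechanism of Lemma~\ref{Lem2.1}(i) applies, with $c=i$ and $z=t$. First I would write
\begin{align*}
H(t)=\frac{1}{t}\int_0^\infty\frac{f_t(u)}{u-it}\,du,\qquad f_t(u):=\frac{u-it(1-e^{-\alpha u})}{e^u-1},
\end{align*}
and record three features of $f_t$: the apparent pole of $1/(e^u-1)$ at $u=0$ is cancelled by the vanishing of the numerator there, so $f_t(u)=O(1)$ as $u\to0$; the factor $1/(e^u-1)$ forces exponential decay as $u\to+\infty$; and $f_t$ is holomorphic on $\mathbb{C}$ except for simple poles at $u=2\pi in\ (n\in\mathbb{Z}\setminus\{0\})$ coming from $1/(e^u-1)$. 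Since $cz=it$ has $\Im(it)=\Re(t)$, the hypothesis $\Im(cz)<0$ of Lemma~\ref{Lem2.1} holds exactly on the domain $\Re(t)<0$, and the only $t$-dependence outside the kernel $1/(u-it)$ is affine in $t$ (hence branch-free); equivalently one splits $u-it(1-e^{-\alpha u})=(u-it)+it\,e^{-\alpha u}$ to isolate the genuinely $t$-independent numerator $e^{-\alpha u}/(e^u-1)$, at the cost of a regularization at $u=0$.

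For (i) and (ii) I would rerun the contour-rotation argument from the proof of Lemma~\ref{Lem2.1}(i): it shows that $\int_0^\infty f_t(u)/(u-it)\,du+f_t(it)\log t$ is single-valued, so the multivalued part of the integral is $-f_t(it)\log t$. Now
\begin{align*}
f_t(it)=\frac{it-it(1-e^{-i\alpha t})}{e^{it}-1}=\frac{it\,e^{-i\alpha t}}{e^{it}-1},
\end{align*}
and using $e^{it}-1=2i\,e^{it/2}\sin\frac t2$ and multiplying by $1/t$ gives the coefficient $-\dfrac{e^{-i(\alpha+\frac12)t}}{2\sin\frac t2}$ of $\log t$ in $H(t)$; this is exactly the term added back in the statement, so that $H(t)+\dfrac{e^{-i(\alpha+\frac12)t}}{2\sin\frac t2}\log t$ is single-valued and meromorphic (its only finite singularities being those in (iii)), giving (ii). Part (i) then follows once one checks that this single-valued remainder is in fact regular at $t=0$: the potential simple pole in $t$ coming from the behaviour of the $u$-integration near $u=0$ must be shown to cancel, after which the remainder is $O(1)$ and the displayed asymptotic drops out.

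For (iii) the poles of $H$ at $t=2n\pi$ arise from the boundary term produced when the kernel pole $u=it$ crosses the contour: that term is proportional to $f_t(it)=\dfrac{it\,e^{-i\alpha t}}{e^{it}-1}$, which is itself meromorphic in $t$ with simple poles precisely at $t=2n\pi\ (n\ne 0)$ where $e^{it}=1$ — equivalently, these are inherited from the fixed poles $u=2\pi in$ of $1/(e^u-1)$. I would compute the residue there from $\mathrm{Res}_{t=2n\pi}\tfrac{1}{e^{it}-1}=\tfrac1i$, and combine it with the contribution of the $\log t$-term (whose factor $1/(2\sin\frac t2)$ also has a simple pole at $t=2n\pi$), keeping careful track of the branch of $\log t$, i.e. of $\arg t$, and of which half-plane the crossing occurs in; the sign $\omega_n=n/|n|$ records the latter, and the combination is arranged so that the $\log|t|$-parts cancel and the stated $\arg t$-dependent residue remains.

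The main obstacle is precisely this analytic continuation past the infinitely many fixed poles $u=2\pi in$: Lemma~\ref{Lem2.1} as stated assumes $f$ holomorphic on $\mathbb{C}\setminus\{0\}$, so I must upgrade its contour argument to a meromorphic numerator and account exactly for the residue contributions accumulated as $it$ winds around the origin. Pinning down the precise $\arg t$-dependence and the uniform sign bookkeeping in (iii), together with the honest regularization needed to separate the divergent $u\to0$ pieces in the splitting above and to verify regularity at $t=0$ in (i), is where the real work lies; parts (i) and (ii) are comparatively direct consequences of the $\log t$-coefficient computation $-f_t(it)/t$.
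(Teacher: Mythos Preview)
Your approach is essentially the paper's: both identify the multivalued part of $H$ via the mechanism of Lemma~\ref{Lem2.1}(i), compute $\tfrac{1}{t}f_t(it)=\tfrac{ie^{-i\alpha t}}{e^{it}-1}=\tfrac{e^{-i(\alpha+\frac12)t}}{2\sin\frac t2}$ to obtain the $\log t$-coefficient, and read off (iii) from the poles of this coefficient together with the branch of $\log t$. So the architecture is right.

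Two corrections to your diagnosis of where the work lies. First, the poles of $1/(e^u-1)$ at $u=2\pi in$ are not the obstacle you make them out to be: they sit on the imaginary $u$-axis and never meet the integration contour $[0,\infty)$. The monodromy statement of Lemma~\ref{Lem2.1}(i) only concerns the simple pole $u=it$ crossing the positive real axis, and that local crossing argument needs no global holomorphy of $f$ away from the contour; a small deformation near the crossing point suffices. The paper simply invokes Lemma~\ref{Lem2.1}(i) here without comment, and that is legitimate.

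Second, the step you defer---regularity of the single-valued remainder at $t=0$---is the actual content of (i), and you give no mechanism for it. Since $\tfrac{1}{2\sin\frac t2}\sim\tfrac1t$, the added-back term behaves like $\tfrac{\log t}{t}$; matching this against $H(t)=\tfrac1t\int_0^\infty(\cdots)$ leaves a potential $C/t$ term, and one must show $C=0$. The paper does this explicitly: split $\int_0^1+\int_1^\infty$, Taylor-expand $f_t(u)=\sum a_n(t)u^n$ on $[0,1]$, and rewrite in powers of $(u-it)$ via the binomial theorem. The non-logarithmic constant at $t=0$ becomes $\int_0^1\bigl(\tfrac{1}{e^u-1}-\tfrac1u\bigr)du+\int_1^\infty\tfrac{1}{e^u-1}\,du$, which vanishes by an identity of Cram\'er. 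Without this cancellation your argument yields only $H(t)=-\tfrac{e^{-i(\alpha+\frac12)t}}{2\sin\frac t2}\log t+O(1/t)$, not $O(1)$.
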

\begin{rem} \label{Rem2.5}
If $ t \in \mathbb{ C } - i \mathbb{ R }_{ \le 0 } $ and the argument lies in $ \left( - \frac{ \pi }{ 2 } , \frac{ 3 \pi }{ 2 } \right) $, it follows from Lemma \ref{Lem2.4} {\upshape (ii)} that $ H ( t ) $ is a meromorphic function because $ \frac{ e^{ - i ( \alpha + \frac{ 1 }{ 2 } ) t } }{ 2 \sin \frac{ t }{ 2 } } \log t $ is such one.
\end{rem}
\begin{proof}[\underline{\textbf{Proof of Lemma \ref{Lem2.4}}}]
(i) $ ( \ref{HDef} ) $ is equivalent to
\begin{align}
t H ( t ) = \left( \int_0^1 + \int_1^\infty \right) \frac{ 1 }{ e^u - 1 } \cdot \frac{ u - i t ( 1 - e^{ - \alpha u } ) }{ u - i t } du \quad ( \Re ( t ) < 0 ). \label{PofLem2.4-1}
\end{align}
The second integral is holomorphic on $ t \in \mathbb{ C } - i \mathbb{ R }_{ \le - 1 } $ and particularly at $ t = 0 $ becomes
\begin{align}
\int_1^\infty \frac{ 1 }{ e^u - 1 } du. \label{PofLem2.4-2}
\end{align}
Next, we concider the first integral of the right-hand side of $ ( \ref{PofLem2.4-1} ) $. For $ | u | < 2 \pi $, we have
\begin{align}
\frac{ u - i t ( 1 - e^{ - \alpha u } ) }{ e^u - 1 } = \sum_{ n = 0 }^\infty a_n ( t ) u^n \label{PofLem2.4-3}
\end{align}
and then by the binomial theorem the right-hand side of $ (\ref{PofLem2.4-3}) $ becomes
\begin{align*}
& \sum_{ n = 0 }^\infty a_n ( t ) ( u - i t + i t )^n \\
& = \sum_{ n = 0 }^\infty a_n ( t ) ( i t )^n + \sum_{ n = 1 }^\infty a_n ( t ) ( u - i t )^n  + \sum_{ n = 2 }^\infty a_n ( t ) \sum_{ k = 1 }^{ n - 1 } \binom n k ( u - i t )^{ n - k } ( i t )^k
\end{align*}
so
\begin{align}
& ( \mbox{the first integral of the right-hand side of } ( \ref{PofLem2.4-1} ) ) \notag \\
\begin{split}
& = \sum_{ n = 0 }^\infty a_n ( t ) ( i t )^n \int_0^1 \frac{ 1 }{ u - i t } du + \sum_{ n = 1 }^\infty a_n ( t ) \int_0^1 ( u - i t )^{ n - 1 } du \\
& \quad + \sum_{ n = 2 }^\infty a_n ( t ) \sum_{ k = 1 }^{ n - 1 } \binom n k ( i t )^k \int_0^1 ( u - i t )^{ n - k - 1 } du. 
\end{split} \label{PofLem2.4-4}
\end{align}
The third term of $ ( \ref{PofLem2.4-4} ) $ is holomorphic for $ | t | < 1 $ and vanishes at $ t = 0 $. The second term of $ ( \ref{PofLem2.4-4} ) $ is equal to
\begin{align}
\int_0^1 \sum_{ n = 1 }^\infty a_n ( 0 ) u^{ n - 1 } du = \int_0^1 \left( \frac{ 1 }{ e^u - 1 } - \frac{ 1 }{ u } \right) du \label{PofLem2.4-5}
\end{align}
at $ t = 0 $, where we use $ a_0 ( 0 ) = 1 $ because we have
\begin{align*}
\frac{ u }{ e^u - 1 } = \sum_{ n = 0 }^\infty a_n ( 0 ) u^n
\end{align*}
from $ ( \ref{PofLem2.4-3} ) $. Then, the first term of $ ( \ref{PofLem2.4-4} ) $ is equal to
\begin{align*}
\sum_{ n = 0 }^\infty a_n ( t ) ( i t )^n ( \log ( 1 - i t ) - \log ( - i t ) ) = - \frac{ i t e^{ - i \alpha t } }{ e^{ i t } - 1 } \log t + t h_1 ( t )
\end{align*}
for $ | t | < 1 $, where $ h_1 ( t ) $ is a power series which converges for $ | t | < 1 $. Noting that by Cram\'{e}r~\cite[p. 117, (20)]{Cra} it was shown that
\begin{align*}
( \mbox{the right-hand side of } ( \ref{PofLem2.4-5} ) ) + ( \ref{PofLem2.4-2} ) = 0,
\end{align*}
it follows that
\begin{align}
H ( t ) = - \frac{ i e^{ - i \alpha t } }{ e^{ i t } - 1 } \log t + h_2 ( t ) = - \frac{ e^{ - i ( \alpha + \frac{ 1 }{ 2 } ) t } }{ 2 \sin \frac{ t }{ 2 } } \log t + h_2 ( t ), \label{PofLem2.4-6}
\end{align}
where $ h_2 ( t ) $ is a power series which converges in $ | t | < 1 $. The proof of (i) is complete. \\
(ii) We find that $ H ( t ) + \frac{ e^{ - i ( \alpha + \frac{ 1 }{ 2 } ) t } }{ 2 \sin \frac{ t }{ 2 } } \log t $ is holomorphic for $ | t | < 1 $ from $ ( \ref{PofLem2.4-6} ) $ and is a single-valued function on $ t \in \mathbb{ C } - \{ 0 \} $ from Lemma \ref{Lem2.1} (i). The proof of (ii) is complete. \\
(iii) By $ ( \ref{HDef} ) $ it is easily found that $ H ( t ) $ is holomorphic if $ \arg t \in \left( - \frac{ \pi }{ 2 } , \frac{ 3 \pi }{ 2 } \right) $. From this and Lemma \ref{Lem2.4} (ii) we can obtain the desired result.
\end{proof}
\begin{lem} \label{Lem2.6}
$ J_j ( t ) $ has the following properties : \\
{\upshape (i)} $ J_j ( t ) $ is a single-valued meromorphic function on $ t \in \mathbb{ C } - \{ 0 \} $, \\
{\upshape (ii)} $ J_j ( t ) $ satisfies that for $ t \in \mathbb{ C } - i \mathbb{ R }_{ \le 0 } $
\begin{align}
J_j( t ) + J_j ( - t ) = \begin{cases}
\dsp{ - \pi i \frac{ e^{ - \frac{ \chi_j ( -1 ) }{ 2 } it } }{ 2 \sin t } - \frac{ i \pi }{ 4 \sin \frac{ t }{ 2 } } } & ( \Re ( t ) < 0 ), \\[+10pt]
\dsp{ \pi i \frac{ e^{ \frac{ \chi_j ( -1 ) }{ 2 } it } }{ 2 \sin t } + \frac{ i \pi }{ 4 \sin \frac{ t }{ 2 } } } & ( \Re ( t ) > 0 ),
\end{cases} \label{Lem2.6-1}
\end{align}
where the argument lies in $ \left( - \frac{ \pi }{ 2 } , \frac{ 3 \pi }{ 2 } \right) $.
\end{lem}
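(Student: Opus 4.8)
The plan is to reduce both parts to Lemma \ref{Lem2.1}(ii) and Remark \ref{Rem2.2} by rewriting the integrand of $(\ref{IDef})$ so that the kernel is $\frac{1}{u^2-(cz)^2}$ with $c=2i$, $z=t$; then $u^2-(cz)^2=u^2+4t^2$ and $\Im(cz)=2\Re(t)$, so the domain $\Re(t)>0$ of $(\ref{IDef})$ is exactly the regime $\Im(cz)>0$ governed by Remark \ref{Rem2.2}. First I would perform the polynomial division
\[
\frac{ u\left( \frac{u}{2}\cos\frac{t}{2} - t\sin\frac{t}{2} \right) }{ u^2+4t^2 }
= \frac{\cos\frac{t}{2}}{2}\cdot\frac{u^2}{u^2+4t^2} - \frac{t\sin\frac{t}{2}\cdot u}{u^2+4t^2},
\]
which is chosen so that each resulting integral still converges at $u=0$, where $\frac{e^{\kappa_j u}}{e^u-1}$ (with $\kappa_j:=\tfrac{1+\chi_j(-1)}{4}$) has a simple pole. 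Writing $f_1(u):=\frac{u^2 e^{\kappa_j u}}{e^u-1}$ and $f_2(u):=\frac{u\,e^{\kappa_j u}}{e^u-1}$ and $F^{(k)}(t):=\int_0^\infty \frac{f_k(u)}{u^2+4t^2}\,du$, this gives
\[
I_j(t)=\frac{\cos\frac{t}{2}}{2t}F^{(1)}(t)-\sin\frac{t}{2}\,F^{(2)}(t),
\]
where each $F^{(k)}$ is of the type $F_2$ in Lemma \ref{Lem2.1}(ii).

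For (i), I would apply Lemma \ref{Lem2.1}(ii) to each $F^{(k)}$: the combination $F^{(k)}(t)+A_k(t)\log t$ with $A_k(t)=\frac{f_k(2it)-f_k(-2it)}{4it}$ is single-valued meromorphic, so the entire monodromy of $I_j$ around $t=0$ is carried by $-\big(\frac{\cos(t/2)}{2t}A_1-\sin(t/2)A_2\big)\log t$. A direct evaluation, splitting on $\chi_j(-1)=\pm1$, yields $A_1=\frac{t}{\sin t},\,A_2=0$ in the even case and $A_1=t\cot t,\,A_2=-\tfrac12$ in the odd case, and in both cases
\[
\frac{\cos\frac{t}{2}}{2t}A_1-\sin\frac{t}{2}\,A_2=\frac{1}{4\sin\frac{t}{2}} .
\]
Hence the term $\frac{\log t}{4\sin(t/2)}$ added in $(\ref{JDef})$ exactly cancels the monodromy, proving that $J_j$ is single-valued; the poles of $f_k$ at $u=2n\pi i$ account for its being meromorphic (not holomorphic), just as in Lemma \ref{Lem2.4}(iii).

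For (ii), I would compute $J_j(t)+J_j(-t)$ by continuing $I_j$ from $\Re(t)>0$ across the upper imaginary axis (to avoid the cut $i\mathbb{R}_{\le0}$). The naive integrals are even, so $I_j^{\mathrm{naive}}(-t)=-I_j^{\mathrm{naive}}(t)$ and these contributions cancel; what survives is the residue at the single pole $u=-2it$ that crosses $[0,\infty)$ during the continuation (equivalently, the $-\pi i\,h(cz)/(cz)$ term of Remark \ref{Rem2.2}). Evaluating that residue gives, for $\Re(t)>0$,
\[
I_j(t)+I_j(-t)=\pi i\,\frac{e^{\frac{\chi_j(-1)}{2}it}}{2\sin t},
\]
after which adding the logarithmic pieces $\frac{\log t-\log(-t)}{4\sin(t/2)}$, controlled by the branch convention $\arg\in(-\tfrac{\pi}{2},\tfrac{3\pi}{2})$, supplies the remaining $\frac{i\pi}{4\sin(t/2)}$ term; the $\Re(t)<0$ case then follows from the manifest symmetry of $J_j(t)+J_j(-t)$ under $t\mapsto -t$.

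The hard part will be the contour bookkeeping in (ii): one must pin down precisely which of the two poles $u=\pm2it$ crosses $[0,\infty)$ and with which orientation as $t$ traverses the origin, since this fixes both the overall sign and the exponential factor $e^{\pm\frac{\chi_j(-1)}{2}it}$ of the residue, while simultaneously keeping $\log t$ and $\log(-t)$ on the correct branch. A clean consistency check that disentangles all the signs is to verify that the total residue collected over one full loop around the origin equals $-\frac{\pi i}{2\sin(t/2)}$, which is exactly the $2\pi i$-monodromy of $\frac{\log t}{4\sin(t/2)}$ with the opposite sign; this simultaneously re-derives (i) and fixes every sign appearing in (ii).
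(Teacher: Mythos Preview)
Your approach is essentially the paper's: reduce (i) to Lemma~\ref{Lem2.1}(ii), and for (ii) use that the naive integral is odd in $t$ so that only the residue term from Remark~\ref{Rem2.2} and the $\log t-\log(-t)$ difference survive. The paper applies Lemma~\ref{Lem2.1}(ii) in one shot with the single $t$-dependent $f(u)=\dfrac{u\,e^{\kappa_j u}\bigl(\frac{u}{2}\cos\frac{t}{2}-t\sin\frac{t}{2}\bigr)}{t(e^u-1)}$ and $c=-2i$ (so that the initial domain $\Im(cz)<0$ of $F_2$ coincides with $\Re(t)>0$), and establishes (\ref{Lem2.6-1}) for $\Re(t)<0$ first; your splitting into $t$-independent $f_1,f_2$ is a harmless variant, though note that with your choice $c=2i$ the two half-planes are swapped relative to Lemma~\ref{Lem2.1}'s convention.
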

\begin{proof}[\underline{\textbf{Proof of Lemma \ref{Lem2.6}}}]
(i) We should use Lemma \ref{Lem2.1} (ii) as
\begin{align*}
c = - 2i , \: z = t \mbox{ and } f( u ) = \frac{ u e^{ \frac{ 1 + \chi_j ( - 1 ) }{ 4 } u } \left( \frac{ u }{ 2 } \cos \frac{ t }{ 2 } - t \sin \frac{ t }{ 2 } \right) }{ t ( e^u - 1 ) }.
\end{align*}
(ii) Let $ t \in \mathbb{ C } - i \mathbb{ R }_{ \le 0 } $ and the argument lie in $ \left( - \frac{ \pi }{ 2 }, \frac{ 3 \pi }{ 2 } \right) $. Then, since $ \frac{ \log t }{ 4 \sin \frac{ t }{ 2 } } $ is meromorphic, by Lemma \ref{Lem2.6} (i) we find that $ I_j ( t ) $ is as well.

Now, by Remark \ref{Rem2.2}, for $ \Re ( t ) < 0 $ we have
\begin{align}
I_j ( t ) = - \pi i \frac{ e^{ \frac{ 1 + \chi_j ( - 1 ) }{ 2 } i t } }{ \sin t } + \frac{ 1 }{ t } \int_0^\infty \frac{ u e^{ \frac{ 1 + \chi_j ( -1 ) }{ 4 } u } \left( \frac{ u }{ 2 } \cos \frac{ t }{ 2 } - t \sin \frac{ t }{ 2 } \right) }{ ( e^u - 1 ) ( u^2 + 4 t^2 ) } du. \label{PofLem2.6-1}
\end{align}
Adding $ \frac{ \log t }{ 4 \sin \frac{ t }{ 2 } } $ to the both sides of $ ( \ref{PofLem2.6-1} ) $, we obtain
\begin{align}
J_j ( t ) = ( \mbox{the right-hand side of } ( \ref{PofLem2.6-1} ) ) + \frac{ \log t }{ 4 \sin \frac{ t }{ 2 } } \quad ( \Re ( t ) < 0 ). \label{PofLem2.6-2}
\end{align}
On the other hand, from $ ( \ref{IDef} ) $ and $ ( \ref{JDef} ) $ we can obtain the following equations : for $ \Re ( t ) < 0 $
\begin{align}
\begin{split}
J_j ( - t ) & = I_j ( - t ) - \frac{ \log ( - t ) }{ 4 \sin \frac{ t }{ 2 } } \\
& = - \frac{ 1 }{ t } \int_0^\infty \frac{ u e^{ \frac{ 1 + \chi_j ( - 1 ) }{ 4 } u } \left( \frac{ u }{ 2 } \cos \frac{ t }{ 2 } - t \sin \frac{ t }{ 2 } \right) }{ ( e^u - 1 ) ( u^2 + 4 t^2 ) } du - \frac{ \log t }{ 4 \sin \frac{ t }{ 2 } } - \frac{ i \pi }{ 4 \sin \frac{ t }{ 2 } }.
\end{split} \label{PofLem2.6-3}
\end{align}
From $ ( \ref{PofLem2.6-2} ) $ and $ ( \ref{PofLem2.6-3} ) $ it is follows that
\begin{align}
J_j ( t ) + J_j ( - t ) = - \pi i \frac{ e^{ 1 + \chi_j ( -1 ) }{ 4 } i t }{ \sin t } - \frac{ i \pi }{ 4 \sin \frac{ t }{ 2 } } \quad ( \Re ( t ) < 0 ), \label{PofLem2.6-4}
\end{align}
which is $ ( \ref{Lem2.6-1} ) $ with $ \mathbb{Re} ( t ) < 0 $. By replacing $ t $ with $ - t $ in $ ( \ref{PofLem2.6-4} ) $, we obtain $ ( \ref{Lem2.6-1} ) $ with $ \Re ( t ) > 0 $.
\end{proof}
Lemma \ref{Lem2.7} was proved by Akatsuka~\cite{Aka2}.
\begin{lem} \label{Lem2.7}
{\upshape (i)~\cite[Lemma 2.5]{Aka2}~}For any $ X , Y \in \mathbb{ R }_{ > 0 } $ satisfying $ X < Y $
\begin{align*}
\log Y - \log X \ge \frac{ Y - X }{ Y }.
\end{align*}
{\upshape (ii)~\cite[Remark 2.1]{Aka2}~}$ \dsp{ \sum_{ p } \sum_{ m = 1 }^\infty \frac{ p^{ - m } }{ m^2 \log p } } < \infty $. \\
{\upshape (iii)~\cite[p639, (4.4)]{Aka2}~}For any fixed $ \delta \in \mathbb{ R }_{ > 0 } $ and any $ A \in \mathbb{ R } $
\begin{align*}
\sum_{ p } \sum_{ m = 1 }^\infty p^{ - m ( 1 + \delta ) } ( m \log p )^A \log p < \infty.
\end{align*}
\end{lem}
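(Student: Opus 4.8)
The plan is to handle the three parts independently, since each is an elementary estimate: (i) is a pure calculus inequality, while (ii) and (iii) reduce to the convergence of standard sums over primes. For (i) I would write the difference as an integral, $\log Y - \log X = \int_X^Y \frac{dt}{t}$, and then use that $t \le Y$ throughout the interval $[X,Y]$, so that $\frac{1}{t} \ge \frac{1}{Y}$ and the integral is bounded below by $\int_X^Y \frac{dt}{Y} = \frac{Y-X}{Y}$. Equivalently, setting $u = X/Y \in (0,1)$, the claim is the familiar inequality $\log u \le u - 1$, which follows from concavity of the logarithm (the tangent line to $\log$ at $u = 1$ lies above the graph). Either route settles (i) at once.

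For (ii) I would split the double sum according to $m = 1$ and $m \ge 2$. In the range $m \ge 2$ I use $\log p \ge \log 2$ together with $\sum_{m \ge 2} p^{-m} = \frac{p^{-2}}{1 - p^{-1}} \le 2 p^{-2}$, so this portion is dominated by a constant times $\sum_p p^{-2} < \infty$. The term $m = 1$ is exactly $\sum_p \frac{1}{p \log p}$, and here the crude bound $\log p \ge \log 2$ fails, since $\sum_p \frac{1}{p}$ diverges; this is the one genuine obstacle. I would instead invoke Mertens' estimate $\sum_{p \le x} \frac{1}{p} = \log\log x + O(1)$ and perform Abel summation against $f(t) = 1/\log t$: the boundary term $\frac{1}{\log x}\sum_{p \le x}\frac{1}{p}$ tends to $0$, and the remaining integral $\int_2^\infty \frac{\log\log t + O(1)}{t (\log t)^2}\, dt$ converges (after the substitution $v = \log t$ it becomes $\int \frac{\log v + O(1)}{v^2}\, dv$). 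Hence $\sum_p \frac{1}{p\log p} < \infty$ and (ii) follows.

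For (iii) the idea is to peel off part of the exponential decay to absorb the polynomial–logarithmic factor. Writing $t = m\log p \ge \log 2$, I would factor $p^{-m(1+\delta)} = p^{-m(1+\delta/2)} \cdot e^{-(\delta/2)t}$ and observe that the function $t \mapsto t^A e^{-(\delta/2)t}$ is continuous on $[\log 2, \infty)$ and tends to $0$ at infinity, hence is bounded by a constant $C = C(A,\delta)$. Therefore each summand is at most $C\, p^{-m(1+\delta/2)}\log p$, and summing first over $m$ gives $\sum_p \log p \cdot \frac{p^{-(1+\delta/2)}}{1 - p^{-(1+\delta/2)}} \le 2\sum_p \frac{\log p}{p^{1+\delta/2}}$, which is bounded by the convergent series $\sum_{n \ge 2}\frac{\log n}{n^{1+\delta/2}}$ because $1 + \delta/2 > 1$. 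This completes (iii).

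Overall I expect the only nontrivial point to be the convergence of $\sum_p \frac{1}{p\log p}$ in part (ii); parts (i) and (iii) become routine once the correct bound is isolated, namely the integral comparison in (i) and the absorption of $t^A$ into half of the exponential decay in (iii).
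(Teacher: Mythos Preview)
Your argument for each part is correct. Note that the paper itself does not supply a proof of this lemma at all: it merely states the three assertions and attributes them to Akatsuka~\cite{Aka2} (Lemma~2.5, Remark~2.1, and equation~(4.4) on p.~639, respectively). So there is no ``paper's own proof'' to compare against; your write-up stands on its own as a self-contained justification of facts that the paper takes for granted.

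One minor remark on (ii): your Abel-summation argument via Mertens is fine, but a slightly quicker route is to use $p_n \gg n\log n$ (Chebyshev), giving $\frac{1}{p_n\log p_n} \ll \frac{1}{n(\log n)^2}$, which is summable by the integral test. Either way the conclusion is the same, and your identification of $\sum_p \frac{1}{p\log p}$ as the only genuinely nontrivial point is accurate.
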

We prove a formula for the gamma function in the following lemma.
\begin{lem} \label{Lem2.8}
Let any fixed $ \psi \in \mathbb{ R } $ satisfy $ - \frac{ \pi }{ 2 } < \psi < \frac{ \pi }{ 2 } $ and let $ \arg \nu \in ( - \psi - \frac{ \pi }{ 2 } , - \psi + \frac{ \pi }{ 2 } ) $ and $ \Re ( w ) > 0 $. Then, we have
\begin{align*}
\frac{ \Gamma ( w ) }{ \nu^w } = \int_0^{ \infty e^{ i \psi } } e^{ - \nu t } t^w \frac{ dt }{ t }.
\end{align*}
\end{lem}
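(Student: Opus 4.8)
The plan is to reduce the claimed identity to the classical integral representation $\Gamma ( w ) = \int_0^\infty e^{ - x } x^{ w - 1 } \, dx$, valid for $\Re ( w ) > 0$, by a change of variable followed by a rotation of the integration ray that is justified by Cauchy's theorem. First I would set $\theta \coloneqq \arg \nu + \psi$. The hypotheses $- \frac{ \pi }{ 2 } < \psi < \frac{ \pi }{ 2 }$ and $\arg \nu \in \left( - \psi - \frac{ \pi }{ 2 } , - \psi + \frac{ \pi }{ 2 } \right)$ give $| \theta | < \frac{ \pi }{ 2 }$, and in particular $\theta \in ( - \pi , \pi )$, so the principal branches of $t^w$ and $x^{ w - 1 }$ are holomorphic along all the rays in play. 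Writing the right-hand side as $\int_0^{ \infty e^{ i \psi } } e^{ - \nu t } t^{ w - 1 } \, dt$ and substituting $x = \nu t$, the path $\arg t = \psi$ is carried onto the ray $L_\theta \coloneqq \{ x : \arg x = \theta \}$, and one obtains $\int_0^{ \infty e^{ i \psi } } e^{ - \nu t } t^{ w - 1 } \, dt = \nu^{ - w } \int_{ L_\theta } e^{ - x } x^{ w - 1 } \, dx$. Hence it suffices to prove $\int_{ L_\theta } e^{ - x } x^{ w - 1 } \, dx = \Gamma ( w )$ for every $\theta$ with $| \theta | < \frac{ \pi }{ 2 }$.

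To establish this I would apply Cauchy's theorem to the positively oriented sector contour formed by the segment $[ \varepsilon , R ]$ of the positive real axis, the large arc $C_R \coloneqq \{ R e^{ i \varphi } : 0 \le \varphi \le \theta \}$, the segment of $L_\theta$ running from $R e^{ i \theta }$ back to $\varepsilon e^{ i \theta }$, and the small arc $C_\varepsilon \coloneqq \{ \varepsilon e^{ i \varphi } : 0 \le \varphi \le \theta \}$ closing the loop at radius $\varepsilon$ (with the obvious reversal of orientation when $\theta < 0$). The integrand $e^{ - x } x^{ w - 1 }$, taken with the principal branch, is holomorphic on $\mathbb{ C } \setminus ( - \infty , 0 ]$, hence on the closed sector swept out by the contour minus the origin, so the integral around the closed loop vanishes. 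Letting $\varepsilon \to 0$ and $R \to \infty$ will then identify the integral along the positive real axis with the integral along $L_\theta$, provided the two arc contributions vanish in these limits.

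The two arc estimates are exactly where the hypothesis $| \theta | < \frac{ \pi }{ 2 }$ is used, and the large-arc bound is the main point of the argument. On $C_R$ one has $| e^{ - x } x^{ w - 1 } | \le R^{ \Re ( w ) - 1 } e^{ | \Im ( w ) | \, | \theta | } e^{ - R \cos \varphi }$; because $| \theta | < \frac{ \pi }{ 2 }$ we have $\cos \varphi \ge \cos \theta > 0$ throughout the arc, so its contribution is $O \bigl( R^{ \Re ( w ) } e^{ - R \cos \theta } \bigr) \to 0$ as $R \to \infty$. On $C_\varepsilon$ the integrand is $O ( \varepsilon^{ \Re ( w ) - 1 } )$ while the arc has length $O ( \varepsilon )$, so its contribution is $O ( \varepsilon^{ \Re ( w ) } ) \to 0$ as $\varepsilon \to 0$, using $\Re ( w ) > 0$. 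Combining the vanishing of the two arcs with the closed-contour relation yields $\int_{ L_\theta } e^{ - x } x^{ w - 1 } \, dx = \int_0^\infty e^{ - x } x^{ w - 1 } \, dx = \Gamma ( w )$, and multiplying by $\nu^{ - w }$ completes the proof. The only delicate bookkeeping is branch consistency under $x = \nu t$, which is harmless here since every argument that occurs stays inside $\left( - \frac{ \pi }{ 2 } , \frac{ \pi }{ 2 } \right)$, well within the domain of the principal branch; I expect the large-arc decay estimate to be the only substantive step.
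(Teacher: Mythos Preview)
Your proof is correct. The paper takes a slightly different route: it first establishes the identity only on the single ray $\arg\nu = -\psi$, where the substitution $t \mapsto t/\nu$ in the classical integral $\Gamma(w) = \int_0^\infty e^{-t} t^{w-1}\,dt$ lands exactly on the ray $\arg t = \psi$ with no further contour move needed, and then observes that for fixed $w$ with $\Re(w)>0$ both sides are holomorphic in $\nu$ on the sector $\Re(\nu e^{i\psi})>0 = \{\,\nu : -\psi-\tfrac{\pi}{2}<\arg\nu<-\psi+\tfrac{\pi}{2}\,\}$, so the identity extends to the full range of $\nu$ by analytic continuation. You instead carry out the substitution $x=\nu t$ for general $\nu$, landing on a rotated ray $L_\theta$ with $|\theta|<\tfrac{\pi}{2}$, and then use Cauchy's theorem with explicit large- and small-arc estimates to rotate $L_\theta$ back to the positive real axis. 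The analytic content is the same---the paper's holomorphy assertion tacitly relies on the same exponential decay along the ray that your large-arc estimate makes explicit---but the paper's analytic-continuation finish is a touch slicker, while your version is more self-contained. One minor remark: your closing sentence says ``every argument that occurs stays inside $(-\tfrac{\pi}{2},\tfrac{\pi}{2})$''; this is true for $\arg t$ and $\arg x$, but $\arg\nu$ itself need only lie in $(-\pi,\pi)$, which is still enough for the principal branch, so the branch bookkeeping goes through as you intend.
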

\begin{proof}[\underline{\textbf{Proof of Lemma \ref{Lem2.8}}}]
For any fixed $ \psi \in \mathbb{ R } $ satisfying $ - \frac{ \pi }{ 2 } < \psi < \frac{ \pi }{ 2 } $, let $ \arg \nu = - \psi $. Then, we have
\begin{align*}
\frac{ \Gamma ( w ) }{ \nu^w } = \int_0^\infty e^{ - t } \left( \frac{ t }{ \nu } \right)^w \frac{ d t }{ t } = \int_0^{ \infty \nu^{ -1 } } e^{ - \nu t } t^w \frac{ d t }{ t } = \int_0^{ \infty e^{ i \psi } } e^{ - \nu t } t^w \frac{ d t }{ t }.
\end{align*}
When $ w $ is fixed in $ \Re ( w ) > 0 $, the both sides are holomorphic in
\begin{align*}
\{ \nu \in \mathbb{ C } \ | \ \Re ( \nu e^{ i \psi } ) > 0 \} = \left\{ \nu \in \mathbb{ C } \ \left| \ -  \psi - \frac{ \pi }{ 2 } < \arg \nu < \frac{ \pi }{ 2 } - \psi \right. \right\}.
\end{align*}
This completes the proof.
\end{proof}
\section{\textbf{Properties of a series concerning the zeros of the Dirichlet $ \bm{ L } $-functions}} \label{Sec3}
For a series $ \theta ( t ) \coloneqq \sum_{ \Re ( \tau ) } e^{ - \tau t } \, ( \Re ( t ) > 0 ) $ where $ \tau \in \mathbb{ C } $ with $ \rho = \frac{ 1 }{ 2 } + i \tau $ for the imaginary zeros $ \rho $ of the Riemann zeta function, Cram\'{e}r~\cite{Cra} deduced the explicit formula and then Guinand~\cite{Gui} obtained the meromorphic continuation and the poles by proving the functional equation and deduced the approximate behavior. Akatsuka~\cite{Aka2} introduced $ \theta^{ * } ( t ) \coloneqq \theta ( t ) - e^{ - \frac{ i t }{ 2 } } \, ( t \in \mathbb{ C } - i \mathbb{ R }_{ \le 0 }  ) $ and proved the properties on the basis of the results of Cram\'{e}r and Guinand.

We define a following series :
\begin{align}
l_{ \chi_j } ( t ) \coloneqq \sum_{ \Re ( \taucj ) > 0 } e^{ - \taucj t } \quad ( \Re ( t ) > 0 ). \label{lcDef}
\end{align}
for $ j \in \mathbb{ Z }_{ >0 } $. With reference to the methods of the above three mathematicians we research in this series.

We define the complete Dirichlet $ L $-function $ \hat{ L } ( s , \chi_j ) $ by
\begin{align*}
\hat{ L } ( s , \chi_j ) \coloneqq \left( \frac{ \pi }{ N_j } \right)^{ - \left( \frac{ s }{ 2 } + \frac{ 1 - \chi_j ( - 1 ) }{ 4 } \right) } \Gamma \left( \frac{ s }{ 2 } + \frac{ 1 - \chi_j ( - 1 ) }{ 4 } \right) L ( s , \chi_j )
\end{align*}
and define that
\begin{align*}
\xi ( s , \chi_j ) \coloneqq \hat{ L } \left( s + \frac{ 1 }{ 2 } , \chi_j \right).
\end{align*}
It is well known that $ \dsp{ \frac{ \xi^\prime }{ \xi } ( s , \chi_j ) } $ satisfies the functional equation :
\begin{align*}
\frac{ \xi^\prime }{ \xi } ( - s , \chi_j ) = - \frac{ \xi^\prime }{ \xi } ( s , \bar{ \chi_j } ).
\end{align*}

First, we prove the meromorphy and the functional equation of $ l_{ \chi_j } ( t ) $.
\begin{thm} \label{Thm3.1}
$ l_{ \chi_j } ( t ) $ has a meromorphic continuation to $ \mathbb{ C } - i \mathbb{ R }_{ \le 0 } $ for which
\begin{align}
& l_{ \chi_j } ( t ) + l_{ \bar{ \chi_j } } ( - t ) \notag \\
& = \begin{cases}
\dsp{ - \frac{ i e^{ - \frac{ \chi_j ( - 1 ) }{ 2 } it } }{ 2 \sin t } - \mutaujz ( e^{ i \taucjz t } + e^{ - i \taucjz t } ) - \mujz } & ( \Re ( t ) < 0 ), \\[+15pt]
\dsp{ \frac{ i e^{ \frac{ \chi_j ( -1 ) }{ 2 } it } }{ 2 \sin t } - \mutaujz ( e^{ i \taucjz t } + e^{ - i \taucjz t } ) - \mujz } & ( \Re ( t ) > 0 ),
\end{cases} \label{Thm3.1-1}
\end{align}
where the argument lies in $ ( - \frac{ \pi }{ 2 } , \ \frac{ 3 \pi }{ 2 } ) $.
\end{thm}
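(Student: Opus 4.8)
The plan is to adapt the method of Cram\'er and Guinand, as carried out by Akatsuka in \cite{Aka2} for the Riemann zeta function, working with the completed function $\xi(s,\chi_j)$ whose zeros are exactly the points $s = i\taucj$. Writing $\rhocj = \frac12 + i\taucj$, the nontrivial zeros satisfy $0 \le \Re(\rhocj) \le 1$, so their parameters $\taucj$ lie in the horizontal strip $|\Im(\taucj)| \le \frac12$, and the condition $\Re(\taucj) > 0$ selects exactly the zeros in the upper half $s$-plane. First I would represent $l_{\chi_j}(t)$, for $\Re(t) > 0$, by the argument principle as a contour integral of $e^{-\taucj t}$ against $\frac{1}{2\pi i}\,d\log\xi(i\taucj,\chi_j)$ over a path enclosing the zeros with $\Re(\taucj) > 0$. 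Since these zeros sit in a bounded horizontal strip, the path may be taken as the imaginary $\taucj$-axis (indented to the right of the boundary zeros) together with horizontal segments at $\Im(\taucj) = \pm(\frac12 + \delta)$, whose contributions vanish by the classical zero-density (Riemann--von Mangoldt) estimates for $L(s,\chi_j)$.

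Next I would invoke the functional equation $\frac{\xi'}{\xi}(-s,\chi_j) = -\frac{\xi'}{\xi}(s,\bar{\chi_j})$ to fold the half-plane $\Re(\taucj) < 0$ onto $\Re(\taucj) > 0$. Concretely, the functional equation sends each zero $\taucj$ of $L(\cdot,\chi_j)$ to a zero $-\taucj$ of $L(\cdot,\bar{\chi_j})$, so that the meromorphic continuation of $l_{\bar{\chi_j}}(-t)$ equals $\sum_{\Re(\taucj) < 0} e^{-\taucj t}$. Adding the two series, $l_{\chi_j}(t) + l_{\bar{\chi_j}}(-t)$ collects $e^{-\taucj t}$ over all nontrivial zeros whose parameter is off the imaginary axis; the meromorphic continuation to $\mathbb{C} - i\mathbb{R}_{\le 0}$ and the split into the two cases $\Re(t) \gtrless 0$ with the prescribed branch $\arg \in (-\frac{\pi}{2}, \frac{3\pi}{2})$ arise from this folding.

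It remains to identify the explicit right-hand side. Writing $\frac{\xi'}{\xi}(s,\chi_j)$ as the sum of its archimedean (gamma-factor) part and $\frac{L'}{L}(s+\frac12,\chi_j)$, the archimedean contribution to the continuation of $l_{\chi_j}(t)$ is governed by the functions $I_j(t)$, $J_j(t)$ of $(\ref{IDef})$--$(\ref{JDef})$; consequently the folded combination $l_{\chi_j}(t) + l_{\bar{\chi_j}}(-t)$ inherits the reflection formula $(\ref{Lem2.6-1})$ of Lemma \ref{Lem2.6}, which furnishes the trigonometric term $\pm\frac{i e^{\mp\frac{\chi_j(-1)}{2}it}}{2\sin t}$ (the sign depending on $\Re(t) \gtrless 0$), the trivial zeros of $L(s,\chi_j)$ summing geometrically into the $\frac{1}{\sin}$ factor. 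The zeros lying on the imaginary $\taucj$-axis, straddled by the indented contour, yield the remaining boundary terms: the central point $\taucj = 0$ (that is $s = \frac12$) gives $-\mujz$, while the real zeros $\frac12 \pm \taucjz$, whose parameters are $\taucj = \mp i\taucjz$, give $-\mutaujz (e^{i\taucjz t} + e^{-i\taucjz t})$.

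I expect the main obstacle to be analytic rather than formal: the sum over zeros is only conditionally convergent, so the contour deformations and the interchange of summation and integration must be justified through symmetric pairing of $\taucj$ with its reflection and through the density estimates, and one must track carefully the branch of the logarithm and the poles along $i\mathbb{R}_{\le 0}$ to obtain the stated meromorphic continuation and the clean separation between the cases $\Re(t) < 0$ and $\Re(t) > 0$.
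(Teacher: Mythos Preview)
Your overall strategy matches the paper's: express $l_{\chi_j}(t)$ as a contour integral of $e^{ist}\frac{\xi'}{\xi}(s,\chi_j)$ around the zeros in the upper half-plane, fold via the functional equation $\frac{\xi'}{\xi}(-s,\chi_j)=-\frac{\xi'}{\xi}(s,\bar\chi_j)$, use Lemma~\ref{Lem2.6} for the archimedean piece, and pick up the real-zero corrections as residues. The ingredients are all right.

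There is, however, a concrete confusion in your contour. The segments you place at $\Im(\taucj)=\pm(\tfrac12+\delta)$ (that is, the vertical lines $\Re(s)=\mp(\tfrac12+\delta)$ in the $s$-plane) do \emph{not} have vanishing contribution, and zero-density estimates are irrelevant here. In the paper the analogous integrals along $\Re(s)=\pm\tfrac12$ (the paths $C_1$ and $C_3$) are evaluated explicitly: the $\frac{\Gamma'}{\Gamma}$ part of $\frac{\xi'}{\xi}$ produces exactly the integral $I_j(t)$, and the $\frac{L'}{L}$ part produces the prime sums. So your proposal is internally inconsistent --- you first assert these integrals vanish, then later invoke $I_j$, $J_j$ as if they had been computed. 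What actually vanishes is only the closing arc at $\Im(s)\to+\infty$, by the decay of $e^{ist}$ when $\Re(t)>0$.

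A second point: the sentence ``the meromorphic continuation of $l_{\bar\chi_j}(-t)$ equals $\sum_{\Re(\taucj)<0}e^{-\taucj t}$'' is circular, since that series diverges for $\Re(t)>0$. The paper avoids this by first deriving a closed-form expression (equation~(\ref{PofThm3.1-7})) showing that $l_{\chi_j}(t)+\frac{\log t}{4\pi\sin(t/2)}$ extends to a single-valued meromorphic function on $\mathbb{C}\setminus\{0\}$; only then does it substitute $t\mapsto-t$, $\chi_j\mapsto\bar\chi_j$ and add. Under addition the prime sums and the constant terms cancel, the $J_j(t)+J_j(-t)$ combination is handled by $(\ref{Lem2.6-1})$, and the remaining closed contour $C_{2,j}\cup C_{4,j}$ encircling the real segment yields $-\mutaujz(e^{i\taucjz t}+e^{-i\taucjz t})-\mujz$ by the residue theorem.
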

\begin{proof}[\underline{\textbf{Proof of Theorem \ref{Thm3.1}}}]
If $ \Re ( t ) > 0 $, then by Cauchy's theorem we have
\begin{align}
l_{ \chi_j } ( t ) &= \frac{ 1 }{ 2 \pi i } \int_{ C_1 \cup C_{ 2 , j } \cup C_3 } e^{ i s t } \frac{ \xi^\prime }{ \xi } ( s , \chi_j ) ds \notag \\
&= \frac{ 1 }{ 2 \pi i } \left( \int_{ C_1 } + \int_{ C_{ 2 , j } } + \int_{ C_3 } \right) e^{ i s t } \frac{ \xi^\prime }{ \xi } ( s , \chi_j ) ds, \label{PofThm3.1-1}
\end{align}
where
\begin{alignat*}{2}
&C_1 && \coloneqq \left\{ s \in \mathbb{ C } \ \left| \ \Re ( s ) = - \frac{ 1 }{ 2 } , \, \Im ( s ) \ge 0 \right. \right\} \\
&C_{ 2 , j } && \coloneqq \left\{ \left. \frac{ 1 }{ 2 } \cos \varphi + i \varepsilon_j \sin \varphi \ \right| 0 \le \varphi \le \pi  \right\} \\
&C_3 && \coloneqq \left\{ s \in \mathbb{ C } \ \left| \Re ( s ) = \frac{ 1 }{ 2 } , \, \Im ( s ) \ge 0 \right. \right\},
\end{alignat*}
and we go around the integral path in the counterclockwise direction.

First, we consider the integral of the path $ C_1 $. It becomes
\begin{align}
\int_{ C_1 } &= \int_\infty^0 e^{ i \left( - \frac{ 1 }{ 2 } + i y \right) t } \frac{ \xi^\prime }{ \xi } \left( - \frac{ 1 }{ 2 } + i y , \chi_j \right) i dy \notag \\
&= - i e^{ - \frac{ i t }{ 2 } } \int_\infty^0 e^{ - y t } \frac{ \xi^\prime }{ \xi } \left( \frac{ 1 }{ 2 } - i y , \bar{ \chi_j } \right) dy \notag \\
&= i e^{ - \frac{ i t }{ 2 } } \int_0^\infty e^{ - yt } \left( - \frac{ 1 }{ 2 } \log \left( \frac{ \pi }{ N_j } \right) \right.  \label{PofThm3.1-2} \\
& \phantom{ i e^{ - \frac{ i t }{ 2 } } \int_0^\infty e^{ - yt } \quad } + \frac{ 1 }{ 2 } \frac{ \Gamma^\prime }{ \Gamma } \left( \frac{ 3 - \chi_j ( -1 ) }{ 4 } - \frac{ i y }{ 2 } \right) \label{PofThm3.1-3} \\
& \phantom{ i e^{ - \frac{ i t }{ 2 } } \int_0^\infty e^{ - yt } \quad } \left. + \frac{ L^\prime }{ L } ( 1 - iy , \bar{ \chi_j } ) \right) dy. \label{PofThm3.1-4}
\end{align}
The term concerning $ ( \ref{PofThm3.1-2} ) $ becomes
\begin{align*}
- \frac{ i }{ 2 } e^{ - \frac{ i t }{ 2 } } \log \left( \frac{ \pi }{ N_j } \right) \int_0^\infty e^{ - y t } dy = - \frac{ i }{ 2 t } e^{ - \frac{ i t }{ 2 } } \log \left( \frac{ \pi }{ N_j } \right).
\end{align*}
Concerning $ ( \ref{PofThm3.1-3} ) $, since
\begin{align*}
\frac{ \Gamma^\prime }{ \Gamma } ( s ) = \int_0^\infty \left( \frac{ e^{ - u } }{ u } - \frac{ e^{ - ( s - 1 ) u } }{ e^u - 1 } \right) du \quad ( \Re ( s ) > 0 ),
\end{align*}
it follows that
\begin{align}
& \frac{ i }{ 2 } \int_0^\infty e^{ - y t } \int_0^\infty \left( \frac{ e^{ - u } }{ u } - \frac{ e^{ \left( \frac{ 1 + \chi_j ( - 1 ) }{ 4 } + \frac{ i y }{ 2 } \right) u } }{ e^u - 1 } \right) du dy \notag \\
& = \frac{ i }{ 2 } e^{ - \frac{ i t }{ 2 } } \int_0^\infty \left( \frac{ e^{ - u } }{ u } \int_0^\infty e^{ - y t } dy - \frac{ e^{ \frac{ 1 + \chi_j ( - 1 ) }{ 4 } u } }{ e^u - 1 } \int_0^\infty e^{ \left( - t + \frac{ i u }{ 2 } \right) y } dy \right) du \notag \\
& = \frac{ i }{ 2 } e^{ - \frac{ i t }{ 2 } } \int_0^\infty \left( \frac{ e^{ - u } }{ u t } - \frac{ e^{ \frac{ 1 + \chi_j ( - 1 ) }{ 4 } u } }{ ( e^u - 1 ) \left( t - \frac{ i u }{ 2 } \right) } \right) du. \label{PofThm3.1-5}
\end{align}
Concerning $ ( \ref{PofThm3.1-4} ) $, since the Euler product $ \prod_{p} ( 1 - \chi_j ( p )p^{ - s } )^{ -1 } ( = L ( s , \chi_j ) ) $ converges uniformly on $ \Re ( s ) = 1 $, we have
\begin{align*}
& i e^{ - \frac{ it }{ 2 } } \int_0^\infty e^{ - y t } \left( - \sum_{ p } \sum_{ m = 1 }^\infty \bar{ \chi_j } ( p^m ) ( \log p ) p^{ - m ( 1 - i y ) } \right) dy \\
& = - i e^{ - \frac{ i t }{ 2 } } \sum_{ p } \sum_{ m = 1 }^\infty \bar{ \chi_j } ( p^m ) p^{ - m } ( \log p ) \int_0^\infty e^{ ( - t + i m \log p ) y } dy \\
& = - i e^{ - \frac{ i t }{ 2 } } \sum_{ p } \sum_{ m = 1 }^\infty \frac{ \bar{ \chi_j } ( p^m ) p^{ - m } ( \log p ) }{ t - i m \log p }.
\end{align*}

Similarly, we can calculate the integral of the path $ C_3 $ in $ ( \ref{PofThm3.1-1} ) $ and obtain the following result :
\begin{align}
\int_{ C_3 } & = - \frac{ i }{ 2 t } e^{ \frac{ i t }{ 2 } } \log \left( \frac{ \pi }{ N } \right) \notag \\
& \quad + \frac{ i }{ 2 } e^{ \frac{ i t }{ 2 } } \displaystyle{ \int_0^\infty \left( \frac{ e^{ - u } }{ u t } - \frac{ e^{ \frac{ 1 + \chi_j ( - 1 ) }{ 4 } u } }{ ( e^u - 1 ) \left( t + \frac{ i u }{ 2 } \right) } \right) du } \label{PofThm3.1-6} \\
& \quad - i e^{ \frac{ i t }{ 2 } } \displaystyle{ \sum_{ p } \sum_{ m = 1 }^\infty \frac{ \chi_j ( p^m ) p^{ - m } \log p }{ t + i m \log p } }. \notag
\end{align}
Noting that
\begin{align*}
& ( \ref{PofThm3.1-5} ) + ( \ref{PofThm3.1-6} ) \\
& = \frac{ i }{ 2 } \left( \frac{ 2 \cos \frac{ t }{ 2 } }{ t } \int_0^\infty \frac{ e^{ - u } }{ u } du - \int_0^\infty \frac{ e^{ \frac{ 1 + \chi_j ( - 1 ) }{ 4 } u } }{ e^u - 1 } \left( \frac{ e^{ - \frac{ i t }{ 2 } } }{ t - \frac{ i u }{ 2 } } + \frac{ e^{ \frac{ i t }{ 2 } } }{ t + \frac{ i u }{ 2 } } \right) du \right) \\
& = \frac{ i }{ 2 } \left( \frac{ 2 \cos \frac{ t }{ 2 } }{ t } \left( \int_0^\infty \frac{ e^{ - u } }{ u } du -  \int_0^\infty \frac{ e^{ \frac{ 1 + \chi_j ( - 1 ) }{ 4 } u } }{ e^u - 1 } du + \int_0^\infty \frac{ e^{ \frac{ 1 + \chi_j ( - 1 ) }{ 4 } } }{ e^u - 1 } du \right) \right. \\
& \left. \qquad - \int_0^\infty \frac{ e^{ \frac{ 1 + \chi_j ( - 1 ) }{ 4 } u } }{ e^u - 1 } \cdot \frac{ 2 t \cos \frac{ t }{ 2 } + u \sin \frac{ t }{ 2 } }{ t^2 + \frac{ u^2 }{ 4 } } du \right) \\
& = \frac{ i \cos \frac{ t }{ 2 } }{ t } \int_0^\infty \left( \frac{ e^{ - u } }{ u } - \frac{ e^{ \frac{ 1 + \chi_j ( -1 ) }{ 4 } u } }{ e^u - 1 } \right) du +2i I_j ( t ),
\end{align*}
we can deduce that for $ \Re ( t ) > 0 $
\begin{align*}
l_{ \chi_j } ( t ) & = - \frac{ \cos \frac{ t }{ 2 } }{ 2 \pi t } \log \left( \frac{ \pi }{ N_j } \right) + \frac{ \cos \frac{ t }{ 2 } }{ 2 \pi t } \int_0^\infty \left( \frac{ e^{ - u } }{ u } - \frac{ e^{ \frac{ 1 + \chi_j ( -1 ) }{ 4 } u } }{ e^u - 1 } \right) du + \frac{ 1 }{ \pi } I_j ( t ) \\
& \quad - \frac{ 1 }{ 2 \pi } e^{ - \frac{ i t }{ 2 } } \sum_{ p } \sum_{ m = 1 }^\infty \frac{ \bar{ \chi_j } ( p^m ) p^{ - m } \log p }{ t - i m \log p } - \frac{ 1 }{ 2 \pi } e^{ \frac{ i t }{ 2 } } \sum_{ p } \sum_{ m = 1 }^\infty \frac{ \chi_j ( p^m ) p^{ - m } \log p }{ t + i m \log p } \\
& \quad + \frac{ 1 }{ 2 \pi i } \int_{ C_{ 2 , j } ( \pi \to 0 ) } e^{ i s t } \frac{ \xi^\prime }{ \xi } ( s , \chi_j ) ds.
\end{align*}
Adding $ \frac{ \log t }{ 4 \pi \sin \frac{ t }{ 2 } } $ to the both sides, we have
\begin{align}
\left.
\begin{array}{@{}l}
\dsp{ l_{ \chi_j } ( t ) + \frac{ \log t }{ 4 \pi \sin \frac{ t }{ 2 } } } \\
= - \dsp{ \frac{ \cos \frac{ t }{ 2 } }{ 2 \pi t } \log \left( \frac{ \pi }{ N_j } \right) + \frac{ \cos \frac{ t }{ 2 } }{ 2 \pi t } \int_0^\infty \left( \frac{ e^{ - u } }{ u } - \frac{ e^{ \frac{ 1 + \chi_j ( -1 ) }{ 4 } u } }{ e^u - 1 } \right) du + \frac{ 1 }{ \pi } J_j ( t ) } \\
\quad \dsp{ - \frac{ 1 }{ 2 \pi } e^{ - \frac{ i t }{ 2 } } \sum_{ p } \sum_{ m = 1 }^\infty \frac{ \bar{ \chi_j } ( p^m ) p^{ - m } \log p }{ t - i m \log p } - \frac{ 1 }{ 2 \pi } e^{ \frac{ i t }{ 2 } } \sum_{ p } \sum_{ m = 1 }^\infty \frac{ \chi_j ( p^m ) p^{ - m } \log p }{ t + i m \log p } } \\
\quad \dsp{ + \frac{ 1 }{ 2 \pi i } \int_{ C_{ 2 , j } ( \pi \to 0 ) } e^{ i s t } \frac{ \xi^\prime }{ \xi } ( s , \chi_j ) ds }
\end{array} \right\} \label{PofThm3.1-7}
\end{align}
because $ J_j ( t ) = I_j ( t ) + \frac{ \log t }{ 4 \sin \frac{ t }{ 2 } } $. Therefore, it is found by Lemma $ \ref{Lem2.6} $ that $ l_{ \chi_j } ( t ) + \frac{ \log t }{ 4 \pi \sin \frac{ t }{ 2 } } $ is a sigle-valued meromorphic function on $ t \in \mathbb{ C } - \{ 0 \} $ and further if $ \arg t \in ( - \frac{ \pi }{ 2 } , \frac{ 3 \pi }{ 2 } ) $ then it follows from the meromorphy of $ \frac{ \log t }{ 4 \pi \sin \frac{ t }{ 2 } } $ that $ l_{ \chi_j } ( t ) $ is also a meromorphic function. It implies  the former statement of Theorem \ref{Thm3.1}. In the following, the argument lies in $ ( - \frac{ \pi }{ 2 } , \frac{ 3 \pi }{ 2 } ) $.

Now, let $ \Re ( t ) < 0 $. Replacing $ t $ and $ \chi_j $ with $ - t $ and $ \bar{ \chi_j } $ respectively in $ ( \ref{PofThm3.1-7} ) $, we have
\begin{align}
\left. \begin{array}{@{}l}
\dsp{ l_{ \bar{ \chi_j } } ( - t ) - \frac{ \log ( - t ) }{ 4 \pi \sin \frac{ t }{ 2 } } } \\
= \dsp{ \frac{ \cos \frac{ t }{ 2 } }{ 2 \pi t } \log \left( \frac{ \pi }{ N_j } \right) - \frac{ \cos \frac{ t }{ 2 } }{ 2 \pi t } \int_0^\infty \left( \frac{ e^{ - u } }{ u } - \frac{ e^{ \frac{ 1 + \chi_j ( -1 ) }{ 4 } u } }{ e^u - 1 } \right) du + \frac{ 1 }{ \pi } J_j ( - t ) } \\
\quad \dsp{ + \frac{ 1 }{ 2 \pi } e^{ \frac{ i t }{ 2 } } \sum_{ p } \sum_{ m = 1 }^\infty \frac{ \chi_j ( p^m ) p^{ - m } \log p }{ t + i m \log p } + \frac{ 1 }{ 2 \pi } e^{ - \frac{ i t }{ 2 } } \sum_{ p } \sum_{ m = 1 }^\infty \frac{ \bar{ \chi_j } ( p^m ) p^{ - m } \log p }{ t - i m \log p } } \\
\quad \dsp{ + \frac{ 1 }{ 2 \pi i } \int_{ C_{ 2, j } ( \pi \to 0 ) } e^{ - i s t } \frac{ \xi^\prime }{ \xi } ( s , \bar{ \chi_j } ds ) }.
\end{array} \right\} \label{PofThm3.1-8}
\end{align}
The second term of the left-hand side of $ ( \ref{PofThm3.1-8} ) $ is equal to
\begin{align*}
- \frac{ \log t }{ 4 \pi \sin \frac{ t }{ 2 } } - \frac{ i }{ 4 \sin \frac{ t }{ 2 } }.
\end{align*}
By Lemma \ref{Lem2.6} (ii), the third term of the right-hand side of $ ( \ref{PofThm3.1-8} ) $ is equal to
\begin{align*}
-i \frac{ e^{ - \frac{ \chi_j ( - 1 ) }{ 2 } i t } }{ 2 \sin t } - \frac{ i }{ 4 \sin \frac{ t }{ 2 } } - \frac{ 1 }{ \pi } J_j ( t ).
\end{align*}
By replacing $ s $ with $ - s $, we find that the last term of the right-hand side of $ ( \ref{PofThm3.1-8} ) $ becomes
\begin{align*}
\frac{ 1 }{ 2 \pi i } \int_{ C_{ 4 , j } ( 0 \to - \pi ) } e^{ i s t } \frac{ \xi^\prime }{ \xi } ( - s , \bar{ \chi_j } ) ( - ds ) = \frac{ 1 }{ 2 \pi i } \int_{ C_{ 4 , j } ( 0 \to - \pi ) } e^{ i s t } \frac{ \xi^\prime }{ \xi } ( s , \chi_j ) ds,
\end{align*}
where
\begin{align*}
C_{ 4 , j } \coloneqq \left\{ \left. \frac{ 1 }{ 2 } cos \varphi + i \varepsilon_j \sin \varphi \ \right| \ \varphi \in \mathbb{ R } , \ - \pi \le \varphi \le 0 \right\}.
\end{align*}
Hence, adding $ ( \ref{PofThm3.1-7} ) $ and $ ( \ref{PofThm3.1-8} ) $, it follows that
\begin{align}
l_{ \chi_j } ( t ) + l_{ \bar{ \chi_j } } ( - t ) = -\frac{ i e^{ - \frac{ \chi_j ( - 1 ) }{ 2 } i t } }{ 2 \sin t } + \frac{ 1 }{ 2 \pi i } \int_{ C_2 ( \pi \to 0 ) \cup C_4 ( 0 \to - \pi ) } e^{ i s t } \frac{ \xi^\prime }{ \xi } ( s , \chi_j ) ds. \label{PofThm3.1-9}
\end{align}
By residue theorem, the second term of the right-hand side of $ ( \ref{PofThm3.1-9} ) $ becomes
\begin{align*}
- \mutaujz e^{ i \taucjz t } - \mutaujz e^{ - i \taucjz t } - \mujz.
\end{align*}
Hence, we obtain $ ( \ref{Thm3.1-1} ) $ for $ \Re ( t ) < 0 $. We can obtain $ ( \ref{Thm3.1-1} ) $ for $ \Re ( t ) > 0 $ by replacing $ t $ with $ - t $ in $ ( \ref{Thm3.1-1} ) $ for $ \Re ( t ) < 0 $. This completes the proof.
\end{proof}
Next, we deduce the explicit formula, the approximate behavior and the poles of $ l_{ \chi_j } ( t ) $.
\begin{thm} \label{Thm3.2}
Define $ S_j : = \left\{ \frac{ 1 + \alpha }{ 2 } \cos \varphi + i \varepsilon_j \sin \varphi + \frac{ 1 - \alpha }{ 2 } \mid 0 \le \varphi \le \pi \right\} $. \\
{\upshape (i)} $ l_{ \chi_j } ( t ) $ has the following expression for $ \Re ( t ) > 0 $ :
\begin{align*}
& l_{ \chi_j } ( t ) \\
&= - \frac{ it }{ 2 \pi } e^{ \frac{ it }{ 2 } } \sum_p \sum_{ m = 1 }^\infty \frac{ \chi_j( p^m ) p^{ - m } }{ m ( t + im \log p ) } + \frac{ e^{ - i \left( \alpha + \frac{ 1 }{ 2 } \right) t } }{ 2 \pi } \left( it \sum_p \sum_{ m = 1 }^\infty \frac{ \bar{ \chi_j } ( p^m ) p^{ - m ( 1 + \alpha ) } }{ m ( t - im \log p ) } \right. \\
&\quad - it \sum_{ m = 1 }^\infty \frac{ \chi_j( -1 )^m e^{ - \alpha m \pi i } }{ m ( t + m \pi ) } +  i \log \left( \frac{ \chi_j( -1 ) \Gamma( 1 + \alpha ) N_j^\alpha G( \chi_j ) }{ ( 2 \pi )^{ 1 + \alpha } } \right) - \frac{ ( 1 + \alpha ) \pi }{ 2 }  \\
& \quad \left. - \frac{ 1 }{ t } \left( \gamma + \log \left( \frac{ 2 \pi }{ N_j } \right) + \frac{ \pi i }{ 2 } \right) + \frac{ 1 }{ t } \int_0^\infty \frac{ 1 }{ e^u - 1 } \cdot \frac{ u + it ( 1 - e^{ - \alpha u } ) }{ u + it } du \right) \\
& \quad - \frac{ t }{ 2 \pi } e^{ - \frac{ it }{ 2 } } \int_{ S_j ( \pi \to 0 ) } e^{ ist } \log L ( s , \chi_j ) ds.
\end{align*}
{\upshape (ii)} $ l_{ \chi_j } ( t ) $ has the following expression for $ t \in \mathbb{ C } - i \mathbb{ R }_{ \le 0 } $ :
\begin{align}
\left.
\begin{array}{@{}l}
l_{ \chi_j } ( t ) \\
= \dsp{ - \frac{ it }{ 2 \pi } e^{ - \frac{ it }{ 2 } } \sum_{ p } \sum_{ m = 1 }^\infty \frac{ \bar{ \chi_j } ( p^m ) p^{ - m } }{ m ( t - im \log p ) } - \frac{ e^{ i \left( \alpha + \frac{ 1 }{ 2 } \right) t } }{ 2 \pi } \left( it \sum_{ p } \sum_{ m = 1 }^\infty \frac{ \chi_j ( p^m ) p^{ - m ( 1 + \alpha ) } }{ m ( t + i m \log p ) } \right. } \\
\quad \dsp{ - it \sum_{ m = 1 }^\infty \frac{ \chi_j( -1 ) e^{ - \alpha m \pi i } }{ m ( t - m \pi ) } + i \log \left( \frac{ \chi_j( -1 ) \Gamma( 1 + \alpha ) N_j^\alpha G( \bar{ \chi_j } ) }{ ( 2 \pi )^{ 1 + \alpha } } \right) - \frac{ ( 1 + \alpha ) \pi }{ 2 } } \\
\quad \dsp{ \left. + \frac{ 1 }{ t } \left( \gamma + \log \left( \frac{ 2 \pi }{ N_j } \right) + \frac{ \pi i }{ 2 } \right) - H ( t ) \right) - \frac{ t }{ 2 \pi } e^{ \frac{ it }{ 2 } } \int_{ S_j ( \pi \to 0 ) } e^{ - i s t } \log L ( s , \bar{ \chi_j } ) ds } \\
\quad \dsp{ - \frac{ i e^{ - \frac{ \chi_j ( -1 ) }{ 2 } i t } }{ 2 \sin t } - \mutaujz ( e^{ i \mutaujz t } + e^{ - i \mutaujz t } ) - \mujz }.
\end{array}
\right\} \label{Thm3.2-1}
\end{align}
{\upshape (iii)} $ l_{ \chi_j } ( t ) $ has the following approximate behavior at $ t = 0 $ :
\begin{align*}
l_{ \chi_j } ( t ) = - \frac{ \log t }{ 2 \pi t } - \frac{ 1 }{ 2 \pi t } \left( \log \left( \frac{ 2 \pi }{ N_j } \right) + \gamma + \frac{ 3 \pi i }{ 2 } \right) + O ( 1 ).
\end{align*}
{\upshape (iv)} $ l_{ \chi_j } ( t ) $ has simple poles in $ \mathbb{ C } - i \mathbb{ R }_{ \le 0 } $ only at the following points : 
\begin{align*}
\begin{cases}
t = im \log p, \\
t = - m \pi,
\end{cases}
\end{align*}
where $ m \in \mathbb{ Z }_{ \ge 1 } $.

In {\upshape (ii)} - {\upshape (iv)}, the argument lies in $ \left( - \frac{ \pi }{ 2 } , \frac{ 3 \pi }{ 2 } \right) $.
\end{thm}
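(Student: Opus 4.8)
The plan is to treat (i) as the central computation and to derive (ii)--(iv) from it. For (i) I would begin with the contour-integral representation for $\Re(t)>0$ established in the proof of Theorem \ref{Thm3.1}, namely $l_{\chi_j}(t)=\frac{1}{2\pi i}\int_{C_1\cup C_{2,j}\cup C_3}e^{ist}\frac{\xi'}{\xi}(s,\chi_j)\,ds$, and deform the two vertical pieces into the region of convergence of the Euler product. Splitting $\frac{\xi'}{\xi}(s,\chi_j)$ into the constant $-\frac12\log(\pi/N_j)$, the gamma factor $\frac12\frac{\Gamma'}{\Gamma}$, and $\frac{L'}{L}$, I would expand $\frac{L'}{L}$ as $-\sum_{p,m}\chi_j(p^m)(\log p)p^{-ms}$ and integrate term by term; on the right contour this yields the prime sum $\sum_{p,m}\frac{\chi_j(p^m)p^{-m}}{m(t+im\log p)}$, while an integration by parts trades $\frac{L'}{L}$ for $\log L$ and produces the term $-\frac{t}{2\pi}e^{-\frac{it}{2}}\int_{S_j(\pi\to0)}e^{ist}\log L(s,\chi_j)\,ds$ (the leading factor $t$ being the boundary cost of that integration by parts). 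The parameter $\alpha\in(0,1)$ enters precisely to fix the semicircle $S_j$ and to keep the shifted sums convergent.

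The gamma factor is the delicate ingredient. Using the integral representation of $\frac{\Gamma'}{\Gamma}$ as in the proof of Theorem \ref{Thm3.1}, and justifying the Mellin manipulations by Lemma \ref{Lem2.8}, I expect the $\frac12\frac{\Gamma'}{\Gamma}$ contribution to collapse to $\int_0^\infty\frac{1}{e^u-1}\cdot\frac{u+it(1-e^{-\alpha u})}{u+it}\,du$, the defining integral $(\ref{HDef})$ of $H$. On the left contour I would invoke the functional equation $\frac{\xi'}{\xi}(-s,\chi_j)=-\frac{\xi'}{\xi}(s,\bar{\chi_j})$ to fold $C_1$ onto a conjugate-character integral, which is the source of the $\bar{\chi_j}$, $\alpha$-shifted prime sum and of the sum $\sum_m\frac{\chi_j(-1)^m e^{-\alpha m\pi i}}{m(t+m\pi)}$ coming from the poles of the shifted integrand. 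The constant $\log\!\big(\chi_j(-1)\Gamma(1+\alpha)N_j^\alpha G(\chi_j)/(2\pi)^{1+\alpha}\big)+\gamma+\log(2\pi/N_j+\pi i/2)$ should then assemble from the root number of the functional equation (the Gauss sum $G(\chi_j)$), the factors $\Gamma(1+\alpha)$ and $N_j^\alpha$ coming from the shifted endpoints, and the Euler constant $\gamma$ from regularizing the gamma integral via the Cram\'{e}r cancellation already used in Lemma \ref{Lem2.4}. Keeping this root-number bookkeeping exact is the first point I expect to be delicate.

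For (ii) I would combine (i) with the functional equation $(\ref{Thm3.1-1})$: applying (i) to $\bar{\chi_j}$ at $-t$, substituting into $(\ref{Thm3.1-1})$ to express $l_{\chi_j}(t)$, and continuing the outcome off the half-plane $\Re(t)>0$. In this passage the explicit $\Gamma'/\Gamma$-integral of (i) is replaced by its continuation $H(t)$, whose single-valued meromorphy on $\mathbb{C}-i\mathbb{R}_{\le0}$ is exactly Lemma \ref{Lem2.4}(ii), and the trailing terms $-\frac{ie^{-\frac{\chi_j(-1)}{2}it}}{2\sin t}-\mutaujz(e^{i\taucjz t}+e^{-i\taucjz t})-\mujz$ are the right-hand side of $(\ref{Thm3.1-1})$, i.e.\ the contribution of the real and central zeros. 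Part (iii) follows by letting $t\to0$ in (ii): by Lemma \ref{Lem2.4}(i) the $H(t)$ term supplies the singular part $-\frac{\log t}{2\pi t}$, the remaining $\frac1t$-terms collect into $-\frac{1}{2\pi t}\big(\log(2\pi/N_j)+\gamma+\frac{3\pi i}{2}\big)$, and every prime sum and integral stays bounded at $t=0$ by Lemma \ref{Lem2.7}.

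Finally, for (iv) I would read the poles off the formula of (ii). The conjugate prime sum, with denominators $t-im\log p$, produces the simple poles at $t=im\log p$, and these cannot cancel against any other term (the $\chi_j$-sum has denominators $t+im\log p$, whose poles lie on the excluded ray $i\mathbb{R}_{\le0}$, and the bounded-contour integral $\int_{S_j}e^{ist}\log L\,ds$ is entire in $t$). On the real axis three terms contribute candidate poles at multiples of $\pi$: the sum with denominators $t-m\pi$ (poles at $t=m\pi>0$), the factor $\frac{1}{2\sin t}$ (poles at every $t=m\pi$), and $H(t)$ (poles at $t=2n\pi$, by Lemma \ref{Lem2.4}(iii)). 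The decisive step is a residue computation showing that at $t=m\pi>0$ these cancel, leaving no pole, whereas at $t=-m\pi<0$ the poles of $\frac{1}{2\sin t}$ have no matching sum term and survive, giving precisely the stated poles $t=-m\pi$. Carrying out this cancellation exactly, together with the Gauss-sum constant in (i), is the part I expect to be the main obstacle.
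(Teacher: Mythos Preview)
Your plan for (ii) and (iii) matches the paper's proof, but for (i) and (iv) the paper proceeds differently, and in each case more directly.

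For (i), the paper does not reuse the $\xi'/\xi$-contour from Theorem~\ref{Thm3.1}. It sets up a fresh contour integral of $e^{i(s-\frac12)t}\,\tfrac{L'}{L}(s,\chi_j)$ over $R_1\cup S_j\cup R_2$, with $R_1=\{\Re(s)=-\alpha,\ \Im(s)\ge0\}$ and $R_2=\{\Re(s)=1,\ \Im(s)\ge0\}$; this is where $\alpha$ and the arc $S_j$ enter. One integration by parts converts $\tfrac{L'}{L}$ into $\log L$. On $R_2$ the Euler product gives the $\chi_j$-prime sum directly. On $R_1$ the paper applies not the $\xi$-functional equation but the \emph{asymmetric} one for $L$,
\[
L(s,\chi_j)=\frac{N_j^{-s}}{(2\pi)^{1-s}}\,G(\chi_j)\,\Gamma(1-s)\bigl(e^{-\frac{\pi i}{2}(1-s)}+\chi_j(-1)e^{\frac{\pi i}{2}(1-s)}\bigr)L(1-s,\bar{\chi_j}),
\]
so that $\log L(-\alpha+iy,\chi_j)$ splits into five explicit summands. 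Integrating each against $e^{-yt}$ over $y\in[0,\infty)$ produces, term by term: the constant $\log\bigl(\chi_j(-1)\Gamma(1+\alpha)N_j^\alpha G(\chi_j)/(2\pi)^{1+\alpha}\bigr)$, the $1/t$-term $\gamma+\log(2\pi/N_j)+\tfrac{\pi i}{2}$, the $H$-type integral (from $\log\Gamma(1+\alpha-iy)$ via $\Gamma'/\Gamma=-\gamma+\int_0^\infty\frac{1-e^{u(1-s)}}{e^u-1}du$), the sum $\sum_m\frac{\chi_j(-1)^m e^{-\alpha m\pi i}}{m(t+m\pi)}$ (from expanding $\log(1+\chi_j(-1)e^{-\pi i(1+\alpha-iy)})$), and the $\bar{\chi_j}$-prime sum at abscissa $1+\alpha$. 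There is no separate $\tfrac12\Gamma'/\Gamma$ piece of $\xi'/\xi$ to handle and no use of Lemma~\ref{Lem2.8}. Your route via $\xi'/\xi$ could be pushed through, but the root-number bookkeeping you flag as delicate is essentially automatic in the paper's route, since those constants are literally the logarithm of the prefactor in the asymmetric functional equation evaluated at $s=-\alpha$.

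For (iv) you are working too hard. The paper does not attempt any residue cancellation at $t=m\pi>0$. It simply observes that the defining series $l_{\chi_j}(t)=\sum_{\Re(\tau_{\chi_j})>0}e^{-\tau_{\chi_j}t}$ converges absolutely, hence is holomorphic, throughout $\Re(t)>0$; so \emph{a priori} there are no poles there, and any apparent singularities in the formula of (ii) on that half-plane must cancel without verification. Formula (ii) is then only needed to read off the poles with $\Re(t)\le0$ (and $t\notin i\mathbb{R}_{\le0}$): the $\bar{\chi_j}$-prime sum contributes the simple poles at $t=im\log p$ on the positive imaginary axis, and $\tfrac{1}{2\sin t}$ together with $H(t)$ (Lemma~\ref{Lem2.4}(iii)) contribute $t=-m\pi$.
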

\begin{proof}[\underline{\textbf{Proof of Theorem \ref{Thm3.2}}}]
(i) If $ \Re ( t ) > 0 $, then by Cauchy's theorem
\begin{align}
l_{ \chi_j } ( t ) = \frac{ 1 }{ 2 \pi t } \int_{ R_1 \cup S_j \cup R_2 } e^{ i \left( s - \frac{ 1 }{ 2 } \right) t } \frac{ L^\prime }{ L } ( s , \chi_j ) ds, \label{PofThm3.2-01}
\end{align}
where
\begin{align*}
R_1 & \coloneqq \{ s \in \mathbb{ C } \ | \ \Re ( s ) = - \alpha , \ \Im ( s ) \ge 0 \}, \\
R_2 & \coloneqq \{ s \in \mathbb{ C } \ | \ \Re ( s ) = 1 , \ \Im ( s ) \ge 0 \},
\end{align*}
and we go around the integral path in the counterclockwise direction. By the partial integration, $ ( \ref{PofThm3.2-01} ) $ becomes
\begin{align}
l_{ \chi_j } ( t ) = - \frac{ t }{ 2 \pi } e^{ - \frac{ i t }{ 2 } } \left( \dsp{ \int_{ R_1 } + \int_{ S_j } + \int_{ R_2 } } \right) e^{ i s t } \log L ( s , \chi_j ) ds. \label{PofThm3.2-02}
\end{align}
By using the functional equation
\begin{align*}
L ( s , \chi_j ) = \frac{ N_j^{ - s } }{ ( 2 \pi )^{ 1 - s } } G ( \chi_j ) \Gamma ( 1 - s ) ( e^{ - \frac{ \pi i }{ 2 } ( 1 - s ) } + \chi_j ( - 1 ) e^{ \frac{ \pi i }{ 2 } ( 1 - s ) } ) L ( 1 - s , \bar{ \chi_j } ),
\end{align*}
the integral of the path $ R_1 $ becomes
\begin{align}
\int_{ R_1 } & = \int_\infty^0 e^{ i ( - \alpha  + i y ) t } \log L ( - \alpha + i y , \chi_j ) i dy \notag \\
& = i e^{ - i \alpha t } \int_\infty^0 e^{ - y t } \left( \log \left( \frac{ N_j^\alpha G ( \chi_j ) }{ ( 2 \pi )^{ 1 + \alpha } } \right) \right. \label{PofThm3.2-03} \\
& \phantom{ i e^{ - i \alpha t } \int_\infty^0 e^{ - y t } } + i y \log \left( \frac{ 2 \pi }{ N_j } \right) \label{PofThm3.2-04} \\
& \phantom{ i e^{ - i \alpha t } \int_\infty^0 e^{ - y t } } + \log \Gamma ( 1 + \alpha - i y ) \label{PofThm3.2-05} \\
& \phantom{ i e^{ - i \alpha t } \int_\infty^0 e^{ - y t } } + \log ( e^{ - \frac{ \pi i }{ 2 } ( 1 + \alpha - iy ) } + \chi_j ( - 1 ) e^{ \frac{ \pi i }{ 2 } ( 1 + \alpha - i y ) } ) \label{PofThm3.2-06} \\
& \left. \phantom{ i e^{ - i \alpha t } \int_\infty^0 e^{ - y t } } + \log L ( 1 + \alpha - i y , \bar{ \chi_j } )  \right) dy. \label{PofThm3.2-07}
\end{align}
The integrals concerning $ ( \ref{PofThm3.2-03} ) $ and $ ( \ref{PofThm3.2-04} ) $ become
\begin{align}
\int_\infty^0 e^{ - y t } \log \left( \frac{ N_j^\alpha G ( \chi_j ) }{ ( 2 \pi )^{ 1 + \alpha } } \right) dy = - \frac{ 1 }{ t } \log \left( \frac{ N_j^\alpha G ( \chi_j ) }{ ( 2 \pi )^{ 1 + \alpha } } \right) \label{PofThm3.2-08}
\end{align}
and
\begin{align}
i \left( \log \left( \frac{ 2 \pi }{ N_j } \right) \right) \int_\infty^0 y e^{ - y t } dy = - \frac{ i }{ t^2 } \log \left( \frac{ 2 \pi }{ N_j } \right). \label{PofThm3.2-09}
\end{align}
respectively. By the partial integration the integral concerning $ ( \ref{PofThm3.2-05} ) $ is equal to
\begin{align}
& \int_\infty^0 e^{ - y t } \log \Gamma ( 1 + \alpha - i y ) dy \notag \\
& = \left[ - \frac{ 1 }{ t } e^{ - y t } \log \Gamma ( 1 + \alpha - i y ) \right]_\infty^0 - \frac{ i }{ t } \int_\infty^0 e^{ - y t } \frac{ \Gamma^\prime }{ \Gamma } ( 1 + \alpha - i y ) dy \notag \\
& = - \frac{ 1 }{ t } \log \Gamma ( 1 + \alpha ) + \frac{ i \gamma }{ t } \int_\infty^0 e^{ - y t } dy - \frac{ i }{ t } \int_\infty^0 e^{ - y t } \int_0^\infty \frac{ 1 - e^{ ( - \alpha + i y ) u } }{ e^u - 1 } du dy \label{PofThm3.2-10}
\end{align}
because
\begin{align*}
\frac{ \Gamma^\prime }{ \Gamma } ( s ) = - \gamma + \int_0^\infty \frac{ 1 - e^{ u ( 1 - s ) } }{ e^u - 1 } du \ \ ( \Re ( s ) > 0 ).
\end{align*}
The integral in the third term of $ ( \ref{PofThm3.2-10} ) $ becomes
\begin{align*}
& \int_0^\infty \frac{ 1 }{ e^u - 1 } \int_\infty^0 ( e^{ - y t } - e^{ - \alpha u + ( - t + i u ) y } ) dy du \\
&= \int_0^\infty \frac{ 1 }{ e^u - 1 } \left( - \frac{ 1 }{ t } + \frac{ e^{ - \alpha u } }{ t - i u } \right) du \\
& = - \frac{ 1 }{ t } \dsp{ \int_0^\infty \frac{ 1 }{ e^u - 1 } \cdot \frac{ u + it ( 1 - e^{ - \alpha u } ) }{ u + i t } du }.
\end{align*}
Hence,
\begin{align}
( \ref{PofThm3.2-10} ) = - \frac{ 1 }{ t } \log \Gamma ( 1 + \alpha ) - \frac{ i \gamma }{ t^2 } + \frac{ i }{ t^2 } \dsp{ \int_0^\infty \frac{ 1 }{ e^u - 1 } \cdot \frac{ u + i t ( 1 - e^{ - \alpha u } ) }{ u + i t } du }. \label{PofThm3.2-11}
\end{align}
The integral concerning $ ( \ref{PofThm3.2-06} ) $ is equal to
\begin{align}
&\int_\infty^0 e^{ - y t } \log ( e^{ - \frac{ \pi i }{ 2 } ( 1 + \alpha - i y ) } + \chi_j ( - 1 ) e^{ \frac{ \pi i }{ 2 } ( 1 + \alpha - i y ) } ) dy \notag \\
\begin{split}
& = \int_\infty^0 e^{ - y t } \log \chi_j ( - 1 ) dy + \int_\infty^0 \frac{ \pi i }{ 2 } ( 1 + \alpha - i y ) e^{ - y t } dy \\
& \quad + \int_\infty^0 e^{ - y t } \log ( 1 + \chi_j ( -1 ) e^{ - \pi i ( 1 + \alpha - i y ) } ) dy. \label{PofThm3.2-12}
\end{split}
\end{align}
Since the third term of $ ( \ref{PofThm3.2-12} ) $ becomes
\begin{align*}
\sum_{ m = 1 }^\infty \frac{ ( - 1 )^{ m - 1 } }{ m } \chi_j ( - 1 )^m e^{ - i m \pi ( 1 + \alpha ) } \int_\infty^0 e^{ - ( t + m \pi ) y } dy = \sum_{ m = 1 }^\infty \frac{ \chi_j ( - 1 )^m e^{ - i \alpha m \pi } }{ m ( t + m \pi ) },
\end{align*}
we have
\begin{align}
( \ref{PofThm3.2-12} ) = - \frac{ 1 }{ t } \log \chi_j ( -1 ) - \frac{ ( 1 + \alpha ) \pi i }{ 2 t } - \frac{ \pi }{ t^2 } + \dsp{ \sum_{ m = 1 }^\infty \frac{ \chi_j ( - 1 )^m e^{ - i m \alpha } }{ m ( t + m \pi ) } }. \label{PofThm3.2-13}
\end{align}
The integral concerning $ ( \ref{PofThm3.2-07} ) $ becomes
\begin{align}
\int_\infty^0 e^{ - y t } \log L ( 1 + \alpha - i y , \bar{ \chi_j } ) dy & = \sum_{ p } \sum_{ m = 1 }^\infty \frac{ \bar{ \chi_j } }{ m } p^{ - m ( 1 + \alpha ) } \int_\infty^0 e^{ ( - t + i m \log p ) y } dy \notag \\
& = - \sum_{ p } \sum_{ m = 1 }^\infty \frac{ \bar{ \chi_j } ( p^m ) p^{ - m ( 1 + \alpha ) } }{ m ( t - i m \log p ) }. \label{PofThm3.2-14}
\end{align}
The integral of the path $ R_2 $ of $ ( \ref{PofThm3.2-02} ) $ becomes
\begin{align}
\int_0^\infty e^{ i ( 1 + i y ) t } \log L ( 1 + i y , \chi_j ) i dy & = i e^{ i t } \sum_{ p } \sum_{ m = 1 }^\infty \frac{ \chi_j ( p^m ) }{ m } p^{ - m } \int_0^\infty e^{ - ( t + i m \log p ) y } dy \notag \\
& = i e^{ i t } \sum_{ p } \sum_{ m = 1 }^\infty \frac{ \chi_j ( p^m ) p^{ - m } }{ m ( t + i m \log p ) }. \label{PofThm3.2-15}
\end{align}

Applying $ ( \ref{PofThm3.2-08} ) , \, ( \ref{PofThm3.2-09} ) , \, ( \ref{PofThm3.2-11} ) , \, ( \ref{PofThm3.2-13} ) , \, ( \ref{PofThm3.2-14} ) $ and $ ( \ref{PofThm3.2-15} ) $ to $ ( \ref{PofThm3.2-02} ) $, we obtain the desired result. \\
(ii) By Theorem \ref{Thm3.2} (i), we find that for $ \Re ( t ) < 0 $
\begin{align*}
& l_{ \bar{ \chi_j } } ( - t ) \\
& = - \frac{ i t }{ 2 \pi } e^{ - \frac{ i t }{ 2 } } \sum_{ p } \sum_{ m = 1 }^\infty \frac{ \bar{ \chi_j } ( p^m ) p^{ - m } }{ m ( t - i m \log p ) } + \frac{ e^{ i \left( \alpha + \frac{ 1 }{ 2 } \right) t } }{ 2 \pi } \left( it \sum_{ p } \sum_{ m = 1 }^\infty \frac{ \chi_j ( p^m ) p^{ - m ( 1 + \alpha ) } }{ m ( t + i m \log p ) } \right. \\
& \quad - i t \sum_{ m = 1 }^\infty \frac{ \chi_j ( - 1 )^m e^{ i \alpha m \pi } }{ m ( t - m \pi ) } + i \log \left( \frac{ \chi_j ( - 1 ) \Gamma ( 1 + \alpha ) N_j^\alpha G ( \bar{ \chi_j } ) }{ ( 2 \pi )^{ 1 + \alpha } } \right) - \frac{ ( 1 + \alpha ) \pi }{ 2 } \\
& \quad \left. + \frac{ 1 }{ t } \left( \gamma + \log \left( \frac{ 2 \pi }{ N_j } \right) + \frac{ \pi i }{ 2 } \right) - H ( t ) \right) + \frac{ t }{ 2 \pi } e^{ \frac{ i t }{ 2 } } \int_{ S ( \pi \to 0 ) } e^{ - i s t } \log L ( s , \bar{ \chi_j } ) ds.
\end{align*}

By using the equation for $ l_{ \chi_j } ( t ) $ deduced in Theorem \ref{Thm3.1}, we obtain $ ( \ref{Thm3.2-1} ) $ for $ \Re ( t ) < 0 $. Since the right-hand side of $ ( \ref{Thm3.2-1} ) $ is meromorphic for $ t \in \mathbb{ C } - i \mathbb{ R }_{ \le 0 } $ if the argument lies in $ \left( - \frac{ \pi }{ 2 } , \frac{ 3 \pi }{ 2 } \right) $, the proof of (ii) is completed.

In the following, let $ t \in \mathbb{ C } - i \mathbb{ R }_{ \le 0 } $ and the argument lie in$ \left( - \frac{ \pi }{ 2 } , \frac{ 3 \pi }{ 2 } \right) $. \\
(iii) By Theorem \ref{Thm3.2} (ii) and Lemma \ref{Lem2.4} (i), we find that
\begin{align*}
l_{ \chi_j } ( t ) & = - \frac{ 1 }{ 2 \pi t } \left( \gamma + \log \left( \frac{ 2 \pi }{ N_j } \right) + \frac{ \pi i }{ 2 } \right) - \frac{ \log t }{ 4 \pi \sin \frac{ t }{ 2 } } - \frac{ i e^{ - \frac{ \chi_j ( -1 ) }{ 2 } i t } }{ 2 \sin t } + O ( 1 ) \quad ( t \to 0 ) \\
& = - \frac{ \log t }{ 2 \pi t } - \frac{ 1 }{ 2 \pi t } \left( \gamma + \log \left( \frac{ 2 \pi }{ N_j } \right) + \frac{ 3 \pi i }{ 2 } \right) + O ( 1 ) \quad ( t \to 0 ).
\end{align*}
(iv)  By $ ( \ref{lcDef} ) $, we find trivially that $ l_{ \chi_j } ( t ) $ is holomorphic for $ \Re ( t ) > 0 $. From this and the expression obtained in Theorem \ref{Thm3.2} (ii), the desired result follows.
\end{proof}
We consider the bounds of $ l_{ \chi_j } ( t ) $ which is needed later.
\begin{lem} \label{Lem3.3}
{\upshape (i)} For $ \Re ( t ) \ge 1 $
\begin{align*}
l_{ \chi_j } ( t ) = O( e^{ - \varepsilon_j \Re ( t ) + \frac{ 1 }{ 2 } | \Im ( t ) | } ).
\end{align*}
{\upshape (ii)} For $ \Re ( t ) \le - 1 $
\begin{align*}
l_{ \chi_j } ( t ) = \frac{ e^{ - \frac{ \chi_j ( -1 ) }{ 2 } it } }{ e^{ it } - e^{ - it } } + O ( e^{ \varepsilon_j \Re ( t ) + \frac{ 1 }{ 2 } | \Im ( t ) | } + e^{ \taucjz | \Im ( t ) | } ).
\end{align*}
{\upshape (iii)} If $ t = \sigma + i U $ with $ U \ge 2 $ and $ - U \le \sigma \le U $, then
\begin{align*}
l_{ \chi_j } ( t ) = \frac{ it }{ 2 \pi } e^{ - \frac{ it }{ 2 } } \sum_{ \scriptsize{ \begin{array}{c}
p, m \\
p^m < e^{ 2U }
\end{array} } } \frac{ \bar{ \chi_j ( p^m ) p^{ -m } } }{ m ( t - im \log p ) } + O( U e^{ \left( \varepsilon_j + \frac{ 1 }{ 2 } \right) U } ).
\end{align*}
\end{lem}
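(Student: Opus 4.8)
The plan is to handle the three regions separately, reducing (ii) to (i) via the functional equation of Theorem \ref{Thm3.1} and reducing (iii) to a truncation of the explicit formula of Theorem \ref{Thm3.2} (ii). Part (i) I would prove directly from the defining series $(\ref{lcDef})$, which converges absolutely for $\Re(t)>0$. Writing each imaginary zero as $\rho=\frac12+i\taucj$ with $\taucj=\gamma+i(\tfrac12-\beta)$, so that $\Re(\taucj)=\gamma=\Im(\rho)$ and $|\Im(\taucj)|=|\tfrac12-\beta|<\tfrac12$ (because $0<\beta<1$), one has
\[
|e^{-\taucj t}|=e^{-\gamma\Re(t)+(\frac12-\beta)\Im(t)}\le e^{-\gamma\Re(t)}\,e^{\frac12|\Im(t)|}.
\]
Summing over the zeros appearing in $(\ref{lcDef})$ and using $\gamma\ge\taucjm>\varepsilon_j$ together with the Riemann--von Mangoldt density bound (which makes $\sum_{\Re(\taucj)>0}e^{-(\gamma-\varepsilon_j)}$ finite), and noting that $\Re(t)\ge1$ lets one replace $e^{-(\gamma-\varepsilon_j)\Re(t)}$ by the summable $e^{-(\gamma-\varepsilon_j)}$, gives
\[
|l_{\chi_j}(t)|\le e^{\frac12|\Im(t)|}\,e^{-\varepsilon_j\Re(t)}\sum_{\Re(\taucj)>0}e^{-(\gamma-\varepsilon_j)\Re(t)}=O\!\left(e^{-\varepsilon_j\Re(t)+\frac12|\Im(t)|}\right).
\]

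For (ii) I would invoke the functional equation $(\ref{Thm3.1-1})$ for $\Re(t)<0$, which yields $l_{\chi_j}(t)=\frac{e^{-\frac{\chi_j(-1)}{2}it}}{e^{it}-e^{-it}}-l_{\bar{\chi_j}}(-t)-\mutaujz(e^{i\taucjz t}+e^{-i\taucjz t})-\mujz$, the first term being the displayed main term since $-\frac{i}{2\sin t}=\frac{1}{e^{it}-e^{-it}}$. As $\Re(-t)\ge1$, part (i) applied to $\bar{\chi_j}$ (admissible because $\varepsilon_j<\min\{\taucjm,\taucjbarm\}$) bounds $l_{\bar{\chi_j}}(-t)$ by $O(e^{\varepsilon_j\Re(t)+\frac12|\Im(t)|})$; the two exponentials, with $\taucjz\in(0,\tfrac12)$ real, contribute $O(e^{\taucjz|\Im(t)|})$, and $\mujz=O(1)$. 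This is exactly the claimed error.

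The substantial work is (iii). I would insert $t=\sigma+iU$ (for which $\arg t$ lies in $(\tfrac\pi4,\tfrac{3\pi}4)\subset(-\tfrac\pi2,\tfrac{3\pi}2)$, so the formula applies) into the exact expression $(\ref{Thm3.2-1})$ of Theorem \ref{Thm3.2} (ii) and estimate every term. The leading prime sum of $(\ref{Thm3.2-1})$ is split at $p^m=e^{2U}$: the truncated part $p^m<e^{2U}$ is the stated main term, while for $p^m\ge e^{2U}$ one has $m\log p\ge 2U\ge 2\Im(t)$, so $|t-im\log p|\ge m\log p-U\ge\tfrac12 m\log p$, whence by Lemma \ref{Lem2.7} (ii) the tail is $o(1)$ and, carrying the prefactor $|t|e^{U/2}=O(Ue^{U/2})$, contributes only $O(Ue^{U/2})$. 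The block carrying $e^{i(\alpha+\frac12)t}$ has prefactor of size $e^{-(\alpha+\frac12)U}$; its ingredients grow at most polynomially in $U$ except for $H(t)$, which by Lemma \ref{Lem2.4} is $O(e^{\alpha U}\log U)$ on $\Im(t)\ge2$, so the whole block is $o(1)$. The terms $\frac{e^{-\frac{\chi_j(-1)}2 it}}{\sin t}$ are $O(e^{-U/2})$ since $|\sin t|\asymp\tfrac12 e^{U}$, and the $\mutaujz$-exponentials are $O(e^{U/2})$. The dominant error is the contour integral $\frac{t}{2\pi}e^{\frac{it}2}\int_{S_j(\pi\to0)}e^{-ist}\log L(s,\bar{\chi_j})\,ds$: on $S_j$ one has $\Re(s)\le1$ and $0\le\Im(s)\le\varepsilon_j$, so $|e^{-ist}|=e^{\Im(st)}\le e^{(1+\varepsilon_j)U}$, while $\log L(s,\bar{\chi_j})$ is bounded on the compact arc $S_j$ (which lies below the lowest zero because $\varepsilon_j<\taucjbarm$, hence in a simply connected zero-free region); combined with the prefactor $|t|e^{-U/2}=O(Ue^{-U/2})$ this gives $O(Ue^{(\frac12+\varepsilon_j)U})$, matching the claim.

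The main obstacle I anticipate is the bookkeeping in (iii): showing that the integral over $S_j$ really produces the stated exponent $(\varepsilon_j+\tfrac12)U$ and that no other term exceeds it. The $\tfrac12$ arises as the net of the prefactor $e^{it/2}$ (size $e^{-U/2}$) against $\Re(s)\le1$ on $S_j$ (size $e^{U}$), and the $\varepsilon_j$ is precisely the height of $S_j$; the delicate points are the uniform boundedness of $\log L(s,\bar{\chi_j})$ on $S_j$ and the tail estimate of the leading prime sum, both of which rest on $\varepsilon_j<\min\{\taucjm,\taucjbarm\}$.
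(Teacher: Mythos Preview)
Your proposal is correct and follows essentially the same route as the paper: part (i) directly from the defining series $(\ref{lcDef})$, part (ii) by the functional equation of Theorem \ref{Thm3.1} together with (i) applied to $\bar{\chi_j}$, and part (iii) by inserting $t=\sigma+iU$ into the explicit formula $(\ref{Thm3.2-1})$, truncating the leading prime sum at $p^m=e^{2U}$ via Lemma \ref{Lem2.7} (ii), and bounding the remaining terms. The paper is terser in (iii), simply asserting that the non-leading terms are estimated ``trivially''; your itemized treatment (prefactor sizes, the $S_j$-integral giving the dominant $O(Ue^{(\varepsilon_j+\frac12)U})$, and the $H(t)$-block being harmless) is exactly the content behind that word, though your invocation of Lemma \ref{Lem2.4} for a growth bound on $H(t)$ is slightly informal---one really uses the integral representation together with Lemma \ref{Lem2.4}(ii) to see that the combination $\frac{e^{i(\alpha+\frac12)t}}{2\pi}H(t)$ stays $O(e^{-U/2}\log U)$, which is more than enough.
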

\begin{proof}[\underline{\textbf{Proof of Lemma \ref{Lem3.3}}}]
(i) If $ \Re ( t ) \ge 1 $, then we have
\begin{align*}
l_{ \chi_j } ( t ) = \sum_{ \Re ( \taucj ) > \varepsilon_j } e^{ - \taucj t }
\end{align*}
from $ ( \ref{lcDef} ) $. Since
\begin{align*}
\left| \sum_{ \Re ( \taucj ) > \varepsilon_j } e^{ - \taucj t } \right| & \le \sum_{ \Re ( \taucj ) > \varepsilon_j } | e^{ - \taucj t } | = \sum_{ \Re ( \taucj ) > \varepsilon_j } e^{ - \Re ( \taucj ) \Re ( t ) + \Im ( \taucj ) \Im ( t ) } \\
& \le e^{ \frac{ 1 }{ 2 } | \Im ( t ) | } \sum_{ \Re ( \taucj ) > \varepsilon_j } e^{ - \Re ( \taucj ) \Re ( t ) } \\
& = e^{ \frac{ 1 }{ 2 } | \Im ( t ) | } e^{ - \varepsilon_j \Re ( t ) } \sum_{ \Re ( \taucj ) > \varepsilon_j } e^{ - ( \Re ( \taucj ) - \varepsilon_j ) \Re ( t ) } \\
& = O ( e^{ - \varepsilon_j \Re ( t ) + \frac{ 1 }{ 2 } | \Im ( t ) | } ),
\end{align*}
we obtain the desired result. \\
(ii) For $ \Re ( t ) \le - 1 $, we have
\begin{align}
& l_{ \chi_j } ( t ) \notag \\
& = - l_{ \bar{ \chi_j } } ( - t ) - \frac{ i e^{ - \frac{ \chi_j ( - 1 ) }{ 2 } i t } }{ 2 \sin t } - \mutaujz ( e^{ i \taucjz t } + e^{ - i \taucjz t } ) - \mujz \notag \\
& = - \sum_{ \Re ( \taucjbar ) > \varepsilon_j } e^{ \taucjbar t } + \frac{ e^{ - \frac{ \chi_j ( - 1 ) }{ 2 } i t } }{ e^{ i t } - e^{ - i t } } - \mutaujz ( e^{ i \taucjz t } + e^{ - i \taucjz t } ) - \mujz \label{PofLem3.3-2}
\end{align}
by Theorem \ref{Thm3.1}. Concerning the first and third term of the right-hand side of $ ( \ref{PofLem3.3-2} ) $, we have
\begin{align*}
\left| \sum_{ \Re ( \taucjbar ) > \varepsilon_j } e^{ \taucjbar t } \right| & \le \sum_{ \Re ( \taucjbar ) > \varepsilon_j } | e^{ \taucjbar t } | = \sum_{ \Re ( \taucjbar ) > \varepsilon_j } e^{ \Re ( \taucjbar ) \Re ( t ) - \Im ( \taucjbar ) \Im ( t ) } \\
& \le e^{ \frac{ 1 }{ 2 } | \Im ( t ) | } \sum_{ \Re ( \taucjbar ) > \varepsilon_j } e^{ \Re ( \taucjbar ) \Re ( t ) } \\
& = e^{ \frac{ 1 }{ 2 } | \Im ( t ) | } e^{ \varepsilon_j \Re ( t ) } \sum_{ \Re ( \taucjbar ) > \varepsilon_j } e^{ ( \Re ( \taucjbar ) - \varepsilon_j ) \Re ( t ) } \\
& = O ( e^{ \varepsilon_j \Re ( t ) + \frac{ 1 }{ 2 } | \Im ( t ) | } )
\end{align*}
and
\begin{align*}
| e^{ i \taucjz t } + e^{ - i \taucjz t } | \le e^{ - \taucjz \Im ( t ) } + e^{ \taucjz \Im ( t ) } = O ( e^{ \taucjz | \Im ( t ) | } )
\end{align*}
respectively. Hence, we obtain the desired result. \\
(iii) When $ t = \sigma + i U $ with $ U \ge 2 $ and $ - U \le \sigma \le U $, we have
\begin{align}
l_{ \chi_j } ( t ) = \frac{ i t }{ 2 \pi } e^{ - \frac{ i t }{ 2 } } \sum_{ p } \sum_{ m = 1 }^\infty \frac{ \bar{ \chi_j } ( p^m ) p^{ - m } }{ m ( t - i m \log p ) } + O ( U e^{ \left( \frac{ 1 }{ 2 } + \varepsilon_j \right) U } )
\end{align}
by estimating trivially each term of the right-hand side of $ ( \ref{Thm3.2-1} ) $ in Theorem \ref{Thm3.2} except the first term.

If $ p^m \ge e^{ 2 U } $, then $ U \le \frac{ m \log p }{ 2 } $, so $ m \log p - U \ge \frac{ m \log p }{ 2 } $. Therefore, we have
\begin{align*}
& \Bigg| \, ( \sigma + i U ) e^{ - \frac{ i }{ 2 } ( \sigma + i U ) } \sum_{ \scriptsize{ \begin{array}{c}
p , m \\
p^m \ge e^{ 2 U }
\end{array} } } \frac{ \bar{ \chi_j } ( p^m ) p^{ - m } }{ m ( \sigma + i U - i m \log p ) } \, \Bigg| \\
& \le 2 U e^{ \frac{ U }{ 2 } } \sum_{ \scriptsize{ \begin{array}{c}
p , m \\
p^m \ge e^{ 2 U }
\end{array} } } \frac{ p^{ - m } }{ m ( m \log p - U ) } \le 4 U e^{ \frac{ U }{ 2 } } \sum_{ \scriptsize{ \begin{array}{c}
p , m \\
p^m \ge e^{ 2 U }
\end{array} } } \frac{ p^{ - m } }{ m^2 \log p } \\
& \le 4 U e^{ \frac{ U }{ 2 } } \sum_{ p , m } \frac{ p^{ - m } }{ m^2 \log p } = O( U e^{ \frac{ U }{ 2 } } ).
\end{align*}
In the last equation, we use Lemma \ref{Lem2.7} (ii). This completes the proof.
\end{proof}
Now, we fix $ \theta_j $ arbitrarily with $ 0 < \theta_j < \frac{ \pi }{ 4 } $ and $ \tan \theta_j < \varepsilon_j $.
\begin{cor} \label{Cor3.4}
{\upshape (i)} For $ u \ge \frac{ 1 }{ \cos \theta_j } $
\begin{align}
&l_{ \chi_j } ( u e^{ - i \theta_j } ) = O( e^{ - \frac{ u }{ 2 } \sin \theta_j } ), \label{Cor3.4-1} \\
&l_{ \chi_j } ( ue^{ i ( \pi - \theta_j ) } ) = O( e^{ \taucjz u \sin \theta_j } ). \label{Cor3.4-2}
\end{align}
{\upshape (ii)} If $ R \ge 1 $ and $ - R \tan \theta_j \le y \le R \tan \theta_j $ then
\begin{align*}
l_{ \chi_j } ( R + i y ) = O( e^{ \frac{ y }{ 2 } } ).
\end{align*}
{\upshape (iii)} If $ \sigma \in \mathbb{ R } $, $ M \in \mathbb{ Z }_{ \ge 100 } $ and $ U \coloneqq \log \left( M + \frac{ 1 }{ 2 } \right)$ then
\begin{align*}
l_{ \chi_j } ( \sigma + i U ) = \begin{cases}
O( e^{ \frac{ U }{ 2 } } ) & ( \sigma \ge 1 ), \\
O( U^2 e^{ \left( \varepsilon_j + \frac{ 1 }{ 2 } \right) U } ) & ( - 1 \le \sigma \le 1 ), \\
O( e^{ \varepsilon_j \sigma + \frac{ U }{ 2 } } + e^{ \taucjz U } ) & ( \sigma \le -1 ).
\end{cases}
\end{align*}
\end{cor}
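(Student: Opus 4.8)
The plan is to read off all three parts of Corollary~\ref{Cor3.4} from the bounds already established in Lemma~\ref{Lem3.3}, after substituting the specific parametrizations of each ray or segment and using the two defining inequalities $0 < \theta_j < \frac{\pi}{4}$ and $\tan\theta_j < \varepsilon_j$ to simplify the resulting exponents. The recurring mechanism is that $\tan\theta_j < \varepsilon_j$ is equivalent to $\varepsilon_j\cos\theta_j > \sin\theta_j$, which is exactly what is needed to convert the generic exponent $-\varepsilon_j\Re(t) + \frac12|\Im(t)|$ of Lemma~\ref{Lem3.3}~(i),(ii) into a clean decay along these directions.

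For part~(i), I would set $t = ue^{-i\theta_j}$, so that $\Re(t) = u\cos\theta_j$ and $|\Im(t)| = u\sin\theta_j$; the hypothesis $u \ge \frac{1}{\cos\theta_j}$ guarantees $\Re(t) \ge 1$, and Lemma~\ref{Lem3.3}~(i) gives $l_{\chi_j}(t) = O(e^{-\varepsilon_j u\cos\theta_j + \frac12 u\sin\theta_j})$. Since $\varepsilon_j\cos\theta_j > \sin\theta_j$, the exponent is strictly below $-\frac12 u\sin\theta_j$, which is $(\ref{Cor3.4-1})$. For $(\ref{Cor3.4-2})$ I would take $t = ue^{i(\pi-\theta_j)}$, for which $\Re(t) = -u\cos\theta_j \le -1$ and $|\Im(t)| = u\sin\theta_j$, and apply Lemma~\ref{Lem3.3}~(ii). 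A short computation shows that its explicit main term $\frac{e^{-\frac{\chi_j(-1)}{2}it}}{e^{it}-e^{-it}}$ is $O(e^{-\frac12 u\sin\theta_j})$ and that the first error term $e^{\varepsilon_j\Re(t)+\frac12|\Im(t)|}$ is again $O(e^{-\frac12 u\sin\theta_j})$, so that the surviving dominant contribution is the growing term $e^{\taucjz u\sin\theta_j}$, giving the claim.

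Part~(ii) is the same idea on a vertical segment: with $t = R+iy$ and $R \ge 1$, Lemma~\ref{Lem3.3}~(i) yields $l_{\chi_j}(t) = O(e^{-\varepsilon_j R + \frac12|y|})$, and the constraint $|y| \le R\tan\theta_j < \varepsilon_j R$ lets me absorb $-\varepsilon_j R$ against $\frac12|y|$ for both signs of $y$ to obtain $O(e^{y/2})$. For part~(iii) the two outer cases are immediate: for $\sigma \ge 1$ I would use Lemma~\ref{Lem3.3}~(i) (dropping the harmless factor $e^{-\varepsilon_j\sigma} \le 1$), and for $\sigma \le -1$ I would use Lemma~\ref{Lem3.3}~(ii), checking that its explicit term is $O(e^{-U/2})$ and is therefore absorbed by $e^{\taucjz U}$.

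The only substantive work, and the step I expect to be the main obstacle, is the middle range $-1 \le \sigma \le 1$ of part~(iii), where I would invoke the refined expansion of Lemma~\ref{Lem3.3}~(iii). Its error term is already $O(Ue^{(\varepsilon_j+\frac12)U})$, so it remains only to bound the truncated sum $\sum_{p^m < e^{2U}} \frac{\bar{\chi_j}(p^m)p^{-m}}{m(t-im\log p)}$, whose prefactor $\frac{it}{2\pi}e^{-it/2}$ has size $O(Ue^{U/2})$. The danger is the small denominators $|t-im\log p| \ge |U - m\log p|$ that arise when $p^m$ is close to $e^U = M+\frac12$. Here the half-integer choice $U = \log(M+\frac12)$ keeps $e^U$ off every integer, so $|p^m - (M+\frac12)| \ge \frac12$, and Lemma~\ref{Lem2.7}~(i) gives $|U - m\log p| \ge \frac{|p^m - (M+\frac12)|}{\max(p^m,\,M+\frac12)}$; summing the resulting contributions over prime powers according to their integer distance from $M+\frac12$, together with an elementary estimate for the terms bounded away from $e^U$, shows that the sum is bounded by a polynomial in $U$. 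Consequently the first term is $O(\mathrm{poly}(U)\,e^{U/2})$, which is negligible against $O(Ue^{(\varepsilon_j+\frac12)U})$, and the total is $O(U^2 e^{(\varepsilon_j+\frac12)U})$, as claimed.
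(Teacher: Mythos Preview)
Your proposal is correct and follows essentially the same route as the paper's proof: each part is reduced to the corresponding estimate in Lemma~\ref{Lem3.3}, with the inequality $\tan\theta_j<\varepsilon_j$ used to simplify exponents in (i)--(ii), and the middle range of (iii) handled via Lemma~\ref{Lem3.3}~(iii) by bounding the truncated prime-power sum through Lemma~\ref{Lem2.7}~(i) and the half-integer choice of $e^U$. The paper makes the latter bound explicit by splitting at $p^m=M+\tfrac12$ and at $p^m=(M+\tfrac12)^2$ to show the sum is $O(U)$, which is exactly the ``polynomial in $U$'' you describe.
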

\begin{proof}[\underline{\textbf{Proof of Corollary \ref{Cor3.4}}}]
(i) First, by Lemma \ref{Lem3.3} (i) we find
\begin{align*}
l_{ \chi_j } ( u e^{ - i \theta_j } ) & = O ( e^{ - \varepsilon_j \Re ( u e^{ - i \theta_j } ) + \frac{ 1 }{ 2 } | \Im ( u e^{ - i \theta_j } ) | } ) \\
& = O ( e^{ - u ( \varepsilon_j \cos \theta_j - \frac{ 1 }{ 2 } \sin \theta_j ) } ) \\
& = O ( e^{ - \frac{ 1 }{ 2 } u \sin \theta_j } ).
\end{align*}
In the last equation we use the fact that $ \frac{ 1 }{ 2 } \sin \theta_j < \varepsilon_j \cos \theta_j - \frac{ 1 }{ 2 } \sin \theta_j $ because $ \tan \theta_j < \varepsilon_j $. Hence, $ ( \ref{Cor3.4-1} ) $ has been proved.

Next, by Lemma \ref{Lem3.3} (ii) we have
\begin{align}
\begin{split}
& l_{ \chi_j } ( u e^{ i ( \pi - \theta_j ) } ) = \frac{ e^{ - \frac{ \chi_j ( - 1 ) }{ 2 } i u e^{ i ( \pi - \theta_j ) } } }{ e^{ i u e^{ i ( \pi - \theta_j ) } } - e^{ - i u e^{ i ( \pi - \theta_j ) } } } \\
& \phantom{ l_{ \chi_j } ( u e^{ i ( \pi - \theta_j ) } ) = } + O ( e^{ \varepsilon_j \Re ( u e^{ i ( \pi - \theta_j ) } ) + \frac{ 1 }{ 2 } | \Im ( u e^{ i ( \pi - \theta_j ) } ) | } + e^{ \taucjz | \Im ( u e^{ i ( \pi - \theta_j ) } ) | } ).
\end{split} \label{PofCor3.4-1}
\end{align}
Now,
\begin{align*}
& \left| \, ( \mbox{the first term of the right-hand side of } ( \ref{PofCor3.4-1} ) ) \, \right| \\
& \le \frac{ e^{ \frac{ 1 }{ 2 } u \sin \theta_j } }{ e^{ u \sin \theta_j } - e^{ - u \sin \theta_j } } = O ( e^{ - \frac{ 1 }{ 2 } u \sin \theta_j } ).
\end{align*}
Hence, we can deduce
\begin{align*}
l_{ \chi_j } ( u e^{ i ( \pi - \theta_j ) } ) & = O ( e^{ - \frac{ 1 }{ 2 } u \sin \theta_j } + e^{ - u ( \varepsilon_j \cos \theta_j - \frac{ 1 }{ 2 } \sin \theta_j ) } + e^{ \taucjz u \sin \theta_j } ) \\
& = O ( e^{ \taucjz u \sin \theta_j } ),
\end{align*}
where in the last equation we use the fact that $ \varepsilon_j \cos \theta_j - \frac{ 1 }{ 2 } \sin \theta_j >0 $ because $ \tan \theta_j < \varepsilon_j $. Hence, $ ( \ref{Cor3.4-2} ) $ holds. \\
(ii) From Lemma \ref{Lem3.3} (i) and $ \tan \theta_j < \varepsilon_j $, we can easily deduce the desired result. \\
(iii) If $ \sigma \ge 1 \, ( \mbox{respectively } \sigma \le - 1 ) $ then we can trivially deduce the desired result from Lemma \ref{Lem3.3} (i) (respectively Lemma \ref{Lem3.3} (ii)).

If $ - 1 \le \sigma \le 1 $ then we can derive
\begin{align*}
& l_{ \chi_j } ( \sigma + i U ) \\
& = \frac{ i ( \sigma + i U ) }{ 2 \pi } e^{ - \frac{ i }{ 2 } ( \sigma + i U ) } \sum_{ \scriptsize{ \begin{array}{c}
p , m \\
p^m < e^{ 2 U }
\end{array} } } \frac{ \bar{ \chi_j } ( p^m ) p^{ - m } }{ m ( \sigma + i U - i m \log p ) } + O ( U e^{ \left( \varepsilon_j + \frac{ 1 }{ 2 } \right) U } )
\end{align*}
from Lemma \ref{Lem3.3} (iii). Concerning the first term of the right-hand side, we find that
\begin{align}
& \left| ( \sigma + i U ) e^{ - \frac{ i }{ 2 } ( \sigma + i U ) } \right| = O ( U e^{ \frac{ U }{ 2 } } ), \notag \\
& \Bigg| \sum_{ \scriptsize{ \begin{array}{c}
p , m \\
p^m < e^{ 2 U }
\end{array} } } \frac{ \bar{ \chi_j } ( p^m ) p^{ - m } }{ m ( \sigma + i U - i m \log p ) } \, \Bigg| \le \sum_{ \scriptsize{ \begin{array}{c}
p , m \\
p^m < e^{ 2 U }
\end{array} } } \frac{ p^{ - m } }{ | \ U - m \log p \ | } \notag \\
& = \sum_{ \scriptsize{ \begin{array}{c}
p , m \\
p^m < M + \frac{ 1 }{ 2 }
\end{array} } } \frac{ p^{ - m } }{ U - m \log p } + \sum_{ \scriptsize{ \begin{array}{c}
p , m \\
M + \frac{ 1 }{ 2 } \le p^m < \left( M + \frac{ 1 }{ 2 } \right)^2
\end{array} } } \frac{ p^{ - m } }{ m \log p - U } \label{PofCor3.4-3}
\end{align}
and that by Lemma \ref{Lem2.7} (i)
\begin{alignat*}{2}
& ( \mbox{the first term of } ( \ref{PofCor3.4-3} ) ) && \le \left( M + \frac{ 1 }{ 2 } \right) \sum_{ \scriptsize{ \begin{array}{c}
p , m \\
p^m < M + \frac{ 1 }{ 2 }
\end{array} } } \frac{ p^{ - m } }{ M + \frac{ 1 }{ 2 } - p^m } \\
&&& \le \left( M + \frac{ 1 }{ 2 } \right) \sum_{ n = 2 }^M \frac{ 1 }{ n \left( M + \frac{ 1 }{ 2 } - n \right) } \\
&&& = \sum_{ n = 2 }^M \frac{ 1 }{ n } + \sum_{ n = 2 }^M \frac{ 1 }{ M + \frac{ 1 }{ 2 } - n } \\
&&& \ll \log M \ll U, \\
& ( \mbox{the second term of } ( \ref{PofCor3.4-3} ) ) && \le \sum_{ \scriptsize{ \begin{array}{c}
p , m \\
M + \frac{ 1 }{ 2 } \le p^m < \left( M + \frac{ 1 }{ 2 } \right)^2
\end{array} } } \frac{ p^{ - m } }{ m \log p - U } \\
&&& \le \sum_{ \scriptsize{ \begin{array}{c}
p , m \\
M + \frac{ 1 }{ 2 } \le p^m < \left( M + \frac{ 1 }{ 2 } \right)^2
\end{array} } } \frac{ 1 }{ p^m - \left( M + \frac{ 1 }{ 2 } \right) } \\
&&& \le \sum_{ n = 1 }^{ M^2 } \frac{ 1 }{ ( n + M ) - ( M + \frac{ 1 }{ 2 } ) } \ll U.
\end{alignat*}
Hence, we can obtain
\begin{align*}
l_{ \chi_j } ( \sigma + i U ) = O ( U^2 e^{ \frac{ U }{ 2 } } + U e^{ ( \varepsilon_j + \frac{ 1 }{ 2 } ) U } ) = O ( U^2 e^{ ( \varepsilon_j + \frac{ 1 }{ 2 } ) U } ).
\end{align*}
This completes the proof.
\end{proof}
\section{\textbf{The ``key equation''}} \label{Sec4}
In this section, we prove an equation we name the ``key equation'' which links the ``factors series'' of $ ( L_{ \chi_1 } \underset{ \mathbb{ F }_1 }{ \otimes } \cdots \underset{ \mathbb{ F }_1 }{ \otimes } L_{ \chi_r } ) ( s ) $ to $ r $-tuples of prime numbers $ ( r \in \mathbb{ Z }_{ \ge 1 } ) $.

Define that
\begin{align*}
\theta^{ ( r ) } \coloneqq \underset{ j \in \{ 1 , \cdots , r \} }{ \min } \{ \theta_j \},
\end{align*}
\begin{align*}
\taurz \coloneqq \underset{ j \in \{ 1 , \cdots , r \} }{ \max } \{ \taucjz \},
\end{align*}
\begin{align*}
D_{ \theta^{ ( r ) } , \taurz } &\coloneqq \left\{ ( w , z ) \in \mathbb{ C }^2 \ \left| \ - \frac{ r }{ 2 } \sin \theta^{ ( r ) } < \Re ( z e^{ - i \theta^{ ( r ) } } ) < - r \taurz \sin \theta^{ ( r ) } \right. \right\} \\
&= \left\{ ( w , z ) \in \mathrm{ C }^2 \ \left| \ - \frac{ r }{ 2 } \tan \theta^{ ( r ) } < \Re ( z ) + \Im ( z ) \tan \theta^{ ( r ) } < - r \taurz \tan \theta^{ ( r ) } \right. \right\},
\end{align*}
\begin{align}
& L_{ \theta^{ ( r ) } }^{ (1) } ( w , z , \{ \chi_j \}_{ j = 1 }^r ) \coloneqq \frac{ 1 }{ \Gamma ( w ) } \int_0^{ \infty e^{ - i \theta^{ ( r ) } } } e^{ - zt } \prod_{ j = 1 }^r l_{ \chi_j } ( t ) t^{ w - 1 } dt, \label{L1Def} \\
& L_{ \theta^{ ( r ) } }^{ ( 2 ) } ( w , z , \{ \chi_j \}_{ j = 1 }^r ) \notag \\
\begin{split}
& \coloneqq ( -1 )^{ r - 1 } \frac{ e^{ \pi i w } }{ \Gamma ( w ) } \int_0^{ \infty e^{ - i \theta^{ ( r ) } } } e^{ zt } \prod_{ j = 1 }^r \left( l_{ \bar{ \chi_j } } ( t ) + \sum_{ n = 1 }^\infty e^{ - \left( 2n - 1 - \frac{ \chi_j ( -1 ) }{ 2 } \right) it } \right. \\
& \left. \phantom{ \sum_{ n = 1 }^\infty e^{ - \left( 2n - 1 - \frac{ \chi_j ( -1 ) }{ 2 } \right) it } } + \mutaujz e^{ i \taucjz t } + \mutaujz e^{ - i \taucjz t } + \mujz \right) t^{ w - 1 } dt , 
\end{split} \label{L2Def} \\
& R_{ \theta^{ ( r ) } } ( w , z , \{ \chi_j \}_{ j = 1 }^r ) \coloneqq \frac{ 2 \pi i }{ \Gamma ( w ) } \lim_{ N \to \infty } \sum_{ \scriptsize{ \begin{array}{c}
p , m \\
p^m < N + 1 / 2
\end{array} } } \underset{ t = im \log p }{ \mathrm{ Res } } e^{ - zt } \prod_{ j = 1 }^r l_{ \chi_j } ( t ) t^{ w - 1 }. \notag
\end{align}
Then, we show
\begin{thm}[\textbf{The ``key equation''}] \label{Thm4.1}
Let $ ( w , z ) \in D_{ \theta^{ ( r ) } , \taurz } $ satisfy $ \Im ( z ) < - ( \frac{ 1 }{ 2 } + \varepsilon^{ ( r ) } ) r $ and $ \mathrm{ Re ( w ) } > r $. Then,
\begin{align}
L_{ \theta^{ ( r ) } }^{ ( 1 ) } ( w , z , \{ \chi_j \}_{ j = 1 }^r ) + L_{ \theta^{ ( r ) } }^{ ( 2 ) } ( w , z , \{ \chi_j \}_{ j = 1 }^r ) = R_{ \theta^{ ( r ) } } ( w , z , \{ \chi \}_{ j = 1 }^r ). \label{Thm4.1-1}
\end{align}
\end{thm}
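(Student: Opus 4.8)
The plan is to recognize both $ L_{ \theta^{ ( r ) } }^{ ( 1 ) } $ and $ L_{ \theta^{ ( r ) } }^{ ( 2 ) } $ as contour integrals of one and the same integrand $ e^{ - zt } \prod_{ j = 1 }^r l_{ \chi_j } ( t ) \, t^{ w - 1 } $, taken along the two opposite rays $ \arg t = - \theta^{ ( r ) } $ and $ \arg t = \pi - \theta^{ ( r ) } $, and then to evaluate the resulting line integral by the residue theorem. First I would check that both integrals converge for $ ( w , z ) \in D_{ \theta^{ ( r ) } , \taurz } $ with $ \Im ( z ) < - ( \frac{ 1 }{ 2 } + \varepsilon^{ ( r ) } ) r $ and $ \Re ( w ) > r $. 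Near $ t = 0 $, Theorem \ref{Thm3.2}~(iii) gives $ l_{ \chi_j } ( t ) = O ( t^{ - 1 } \log t ) $, so the integrand is $ O ( t^{ \Re ( w ) - 1 - r } ( \log t )^r ) $, which is integrable precisely because $ \Re ( w ) > r $; along the ray the exponential decay of $ l_{ \chi_j } $ from Corollary \ref{Cor3.4}~(i), combined with the sign conditions on $ z $ defining $ D_{ \theta^{ ( r ) } , \taurz } $, secures decay at infinity.

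The conceptual heart is the transformation of $ L_{ \theta^{ ( r ) } }^{ ( 2 ) } $ by the functional equation. On the ray $ \arg t = - \theta^{ ( r ) } $ one has $ \Re ( t ) > 0 $ and $ \Im ( t ) < 0 $, so applying Theorem \ref{Thm3.1} with $ \chi_j $ replaced by $ \bar{ \chi_j } $ and expanding
\[
\frac{ i e^{ \frac{ \chi_j ( -1 ) }{ 2 } it } }{ 2 \sin t } = - \sum_{ n = 1 }^\infty e^{ - \left( 2n - 1 - \frac{ \chi_j ( -1 ) }{ 2 } \right) it } \qquad ( \Im ( t ) < 0 ),
\]
I expect the bracketed factor in $ ( \ref{L2Def} ) $ to collapse exactly to $ - l_{ \chi_j } ( - t ) $: the geometric series annihilates the $ \frac{ i }{ 2 \sin t } $-term, while the two $ \taucjz $-exponentials and the constant $ \mujz $ annihilate the remaining explicit terms of the functional equation. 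Hence the product equals $ ( - 1 )^r \prod_{ j = 1 }^r l_{ \chi_j } ( - t ) $. Substituting $ t \mapsto - t $ sends the ray $ \arg t = - \theta^{ ( r ) } $ to $ \arg t = \pi - \theta^{ ( r ) } $ and turns $ e^{ zt } $ into $ e^{ - zt } $; tracking the branch of $ ( - t )^{ w - 1 } $ absorbs the prefactor $ ( - 1 )^{ r - 1 } e^{ \pi i w } $, yielding
\[
L_{ \theta^{ ( r ) } }^{ ( 2 ) } ( w , z , \{ \chi_j \}_{ j = 1 }^r ) = - \frac{ 1 }{ \Gamma ( w ) } \int_0^{ \infty e^{ i ( \pi - \theta^{ ( r ) } ) } } e^{ - zt } \prod_{ j = 1 }^r l_{ \chi_j } ( t ) \, t^{ w - 1 } dt.
\]

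Consequently $ L_{ \theta^{ ( r ) } }^{ ( 1 ) } + L_{ \theta^{ ( r ) } }^{ ( 2 ) } $ equals $ \Gamma ( w )^{ - 1 } $ times the integral of $ e^{ - zt } \prod_{ j } l_{ \chi_j } ( t ) \, t^{ w - 1 } $ along the full straight line through the origin of inclination $ - \theta^{ ( r ) } $, oriented from $ \infty e^{ i ( \pi - \theta^{ ( r ) } ) } $ through $ 0 $ to $ \infty e^{ - i \theta^{ ( r ) } } $. I would close this line in the upper region $ \{ - \theta^{ ( r ) } < \arg t < \pi - \theta^{ ( r ) } \} $ by the horizontal segment $ \Im ( t ) = U $ with $ U = \log ( M + \frac{ 1 }{ 2 } ) $, a height that avoids every pole $ t = i m \log p $ since $ p^m $ is an integer. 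The residue theorem then collects exactly the residues at $ t = i m \log p $ with $ p^m < M + \frac{ 1 }{ 2 } $, whereas the poles $ t = - m \pi $ of Theorem \ref{Thm3.2}~(iv) lie outside the region (their argument $ \pi \notin ( - \theta^{ ( r ) } , \pi - \theta^{ ( r ) } ) $) and contribute nothing; letting $ M \to \infty $ reproduces $ R_{ \theta^{ ( r ) } } ( w , z , \{ \chi_j \}_{ j = 1 }^r ) $ and gives $ ( \ref{Thm4.1-1} ) $.

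The main obstacle is this last step: I must show that the contribution of the horizontal segment tends to $ 0 $ as $ M \to \infty $. Here the calibrated growth bounds of Corollary \ref{Cor3.4}~(ii),~(iii) on $ l_{ \chi_j } ( \sigma + i U ) $ must combine with the decay furnished by $ e^{ - zt } $ so that the competing exponential rates cancel along the whole segment; this is exactly where the hypotheses $ \Im ( z ) < - ( \frac{ 1 }{ 2 } + \varepsilon^{ ( r ) } ) r $ and the precise definition of $ D_{ \theta^{ ( r ) } , \taurz } $ enter, and verifying the cancellation uniformly in $ M $ (so that the truncated residue sum converges to the limit defining $ R_{ \theta^{ ( r ) } } $) is the delicate technical point.
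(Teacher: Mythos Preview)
Your plan is essentially the paper's own proof. The paper packages the line integral as an auxiliary function $F_{\theta^{(r)}}(w,z,\{\chi_j\};\lambda)$ with a small arc of radius $\lambda$ around $t=0$, shows it is independent of $\lambda$, and sends $\lambda\to 0$ to obtain $L^{(1)}+L^{(2)}$ exactly via the functional-equation manipulation you describe; it then closes the contour and lets first $R\to\infty$ and then $M\to\infty$ to get $R_{\theta^{(r)}}$. One small imprecision in your sketch: the closing contour is not a single horizontal segment but a horizontal segment $\Im t=U$ together with a vertical segment $\Re t=R$ (the lower ray $\arg t=-\theta^{(r)}$ sits in $\Im t<0$), and the paper disposes of the vertical piece first using Corollary~\ref{Cor3.4}~(ii) before treating the horizontal piece with Corollary~\ref{Cor3.4}~(iii); your reference to both parts of that corollary shows you have the right ingredients, just make the two-step closure explicit.
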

\begin{proof}[\underline{\textbf{Proof of Theorem \ref{Thm4.1}}}]
Let $ \lambda $ be any fixed real number with $ 0 < \lambda < \log 2 $ and we define
\begin{align*}
F_{ \theta^{ ( r ) } } ( w , z , \{ \chi_j \}_{ j = 1 }^r ; \lambda ) \coloneqq \frac{ 1 }{ \Gamma ( w ) } \int_{ V_{ \lambda , \theta^{ ( r ) } } } e^{ - z t } \prod_{ j = 1 }^r l_{ \chi_j } ( t ) t^{ w - 1 } dt,
\end{align*}
where $ V_{ \lambda , \theta^{ ( r ) } } $ is the union of $ V_1 ( \infty \to \lambda ) $, $ V_2 ( \pi - \theta^{ ( r ) } \to - \theta^{ ( r ) } ) $ and $ V_3 ( \lambda \to \infty ) $ when
\begin{align*}
V_1 & \coloneqq \{ \nu e^{ i ( \pi - \theta^{ ( r ) } ) } \: | \: \nu \ge \lambda \}, \\
V_2 & \coloneqq \{ \lambda e^{ i \varphi } \: | \: - \theta^{ ( r ) } \le \varphi \le \pi - \theta^{ ( r ) } \}, \\
V_3 & \coloneqq \{ \nu e^{ - i \theta^{ ( r ) } } \: | \: \nu \ge \lambda \}.
\end{align*}

By Corollary \ref{Cor3.4} (i), for large enough $ u $
\begin{align*}
l_{ \chi_j } ( u e^{ - i \theta^{ ( r ) } } ) = O ( e^{ - \frac{ u }{ 2 } \sin \theta^{ ( r ) } } ) , \, l_{ \chi_j } ( u e^{ i ( \pi - \theta^{ ( r ) } ) } ) = O ( e^{ \taurz u \sin \theta^{ ( r ) } } ).
\end{align*}
Therefore, $ F_{ \theta^{ ( r ) } } ( w , z , \{ \chi_j \}_{ j = 1 }^r ; \lambda ) $ converges absolutely and uniformly on any compact subset of $ D_{ \theta^{ ( r ) } , \taurz } $.

Now, when $ ( w , z ) \in D_{ \theta^{ ( r ) } , \taurz } $ and $ 0 < \eta < \lambda $, we have
\begin{align*}
& F_{ \theta^{ ( r ) } } ( w , z , \{ \chi_j \}_{ j = 0 }^r ; \lambda ) - F_{ \theta^{ ( r ) } } ( w , z , \{ \chi_j \}_{ j = 1 }^r ; \eta ) \\
&= \frac{ 1 }{ \Gamma ( w ) } \int_{ W_{ \eta , \lambda , \theta^{ ( r ) } } } e^{ - z t } \prod_{ j = 1 }^r l_{ \chi_j } ( t ) t^{ w - 1 } dt = 0
\end{align*}
by Theorem \ref{Thm3.2} (iv) and Cauchy's theorem, where
\begin{align*}
W_{ \eta , \lambda , \theta^{ ( r ) } } \coloneqq & \{ \lambda e^{ i \varphi } \: | \: - \theta^{ ( r ) } \le \varphi \le \pi - \theta^{ ( r ) } \} \cup \{ \nu e^{ - i \theta^{ ( r ) } } \: | \: \eta \le \nu \le \lambda \} \\
& \cup \{ \eta e^{  i \varphi } \: | \: - \theta^{ ( r ) } \le \varphi \le \pi - \theta^{ ( r ) } \} \cup \{ R e^{ i ( \pi - \theta^{ ( r ) } ) } \: | \: \eta \le \nu \le \lambda \}
\end{align*}
and we go around the integral path in the counterclockwise direction. If $ \Re ( w ) > r $, then by Theorem \ref{Thm3.2} (iii) we have
\begin{align}
& F_{ \theta^{ ( r ) } } ( w , z , \{ \chi_j \}_{ j = 1 }^r ; \lambda ) = \lim_{ \eta \downarrow 0 } F_{ \theta^{ ( r ) } } ( w , z, \{ \chi_j \}_{ j = 1 }^r ; \eta ) \notag \\
\begin{split}
& = \frac{ 1 }{ \Gamma ( w ) } \int_{ \infty e^{ i ( \pi - \theta^{ ( r ) } ) } }^0 e^{ - z t } \prod_{ j = 1 }^r l_{ \chi_j } ( t ) t^{ w - 1 } dt \\
& \quad + \frac{ 1 }{ \Gamma ( w ) } \int_0^{ \infty e^{ - i \theta^{ ( r ) } } } e^{ - z t } \prod_{ j = 1 }^r l_{ \chi_j } ( t ) t^{ w - 1 } dt.
\end{split} \label{PofThm4.1-1}
\end{align}
By replacing $ t $ with $ - t $, using Theorem \ref{Thm3.1} and taking note of
\begin{align*}
\frac{ i e^{ \frac{ \chi_j ( - 1 ) }{ 2 } it } }{ 2 \sin t } = - \frac{ e^{ \left( \frac{ \chi_j ( - 1 ) }{ 2 } - 1 \right) i t } }{ 1 - e^{ - 2 i t } } = - \sum_{ n = 1 }^\infty e^{ - \left( 2 n - 1 - \frac{ \chi_j ( - 1 ) }{ 2 } \right) i t },
\end{align*}
we find that the first term of $ ( \ref{PofThm4.1-1} ) $ is equal to
\begin{align*}
& ( - 1 )^{ r - 1 } \frac{ e^{ \frac{ \pi i w }{ 2 } } }{ \Gamma ( w ) } \int_0^{ \infty e^{ - i \theta^{ ( r ) } } } e^{ z t } \prod_{ j = 1 }^r \left( l_{ \bar{ \chi_j } } ( t ) + \sum_{ n = 1 }^\infty e^{ - \left( 2 n - 1 - \frac{ \chi_j ( - 1 ) }{ 2 } \right) i t } \right. \\
& \left. \phantom{ \sum_{ n = 1 }^\infty e^{ - \left( 2 n - 1 - \frac{ \chi_j ( - 1 ) }{ 2 } \right) i t } } + \mutaujz e^{ i \taucjz t } + \mutaujz e^{ - i \taucjz t } + \mujz \right) t^{ w - 1 } dt.
\end{align*}
Hence, we have
\begin{align*}
F_{ \theta^{ ( r ) } } ( w , z , \{ \chi_j \}_{ j = 1 }^r ; \lambda ) = L_{ \theta^{ ( r ) } }^{ ( 1 ) } ( w , z , \{ \chi_j \}_{ j = 1 }^r ) + L_{ \theta^{ ( r ) } }^{ ( 2 ) } ( w , z , \{ \chi_j \}_{ j = 1 }^r ).
\end{align*}

Next, we define that $ U \coloneqq \log \left( M + \frac{ 1 }{ 2 } \right) $ for $ M \in \mathbb{ Z }_{ \ge 100 } $ and let $ ( w , z ) \in D_{ \theta^{ ( r ) } , \taurz } $ with $ \Im ( z ) < - \left( \frac{ 1 }{ 2 } + \varepsilon^{ ( r ) } \right) r $ and $ R \in \mathbb{ R } $ with $ R \tan \theta^{ ( r ) } \ge U $. By Theorem \ref{Thm3.2} (iv) and the residue theorem, we have
\begin{align}
\int_{ P_1 \cup P_2 \cup P_3 } e^{ - z t } \prod_{ j = 1 }^r l_{ \chi_j } ( t ) t^{ w - 1 } dt = 2 \pi i \sum_{ \scriptsize{ \begin{array}{c}
p , m \\
p^m < M + \frac{ 1 }{ 2 }
\end{array} } } \underset{ t = i m \log p }{ \mathrm{ Res } } e^{ - z t } \prod_{ j = 1 }^r l_{ \chi_j } ( t ) t^{ w - 1 }, \label{PofThm4.1-2}
\end{align}
where
\begin{align*}
P_1 & \coloneqq \left\{ - u + i u \tan \theta^{ ( r ) } \ \left| \ \frac{ U }{ \tan \theta^{ ( r ) } } \le u \le \lambda \cos \theta^{ ( r ) } \right. \right\} \\
& \quad \cup \{ \lambda e^{ i \varphi } \: | \: - \theta^{ ( r ) } \le \varphi \le \pi - \theta^{ ( r ) } \} \\
& \quad \cup \{ u - i u \tan \theta^{ ( r ) } \: | \: \lambda \cos \theta^{ ( r ) } \le u \le R \}, \\
P_2 & \coloneqq \{ R + i y \: | \: - R \tan \theta^{ ( r ) } \le y \le U \}, \\
P_3 & \coloneqq \left\{ \sigma + i U \: \left| \: - \frac{ U }{ \tan \theta^{ ( r ) } } \le \sigma \le R  \right. \right\}
\end{align*}
and we go around the integral path in the counterclockwise direction. First, we consider the limit of $ ( \ref{PofThm4.1-2} ) $ as $ R \to \infty $. Concerning the integral of the path $ P_2 $, we have
\begin{align}
& \left| \int_{ - R \tan \theta^{ ( r ) } }^U e^{ - z ( R + i y ) } \prod_{ j = 1 }^r l_{ \chi_j } ( R + i y ) ( R + i y )^{ w - 1 } i dy \right| \notag \\
& \ll_{ r , w } R^{ \Re ( w ) - 1 } e^{ - \Re ( z ) R } \int_{ - R \tan \theta^{ ( r ) } }^U e^{ ( \Im ( z ) + \frac{ r }{ 2 } ) y } dy \notag \\
& \le R^{ \Re ( w ) - 1 } e^{ - \Re ( z ) R } \int_{ - R \tan \theta^{ ( r ) } }^{ R \tan \theta^{ ( r ) } } e^{ ( \Im ( z ) + \frac{ r }{ 2 } ) y } dy \notag \\
& = R^{ \Re ( w ) - 1 } e^{ - \Re ( z ) R } \frac{ e^{ ( \Im ( z ) + \frac{ r }{ 2 } ) R \tan \theta^{ ( r ) } } - e^{ - ( \Im ( z ) + \frac{ r }{ 2 } ) R \tan \theta^{ ( r ) } } }{ \Im ( z ) + \frac{ r }{ 2 } } \notag \\
& \le - \frac{ R^{ \Re ( w ) - 1 } }{ \Im ( z ) + \frac{ r }{ 2 } } e^{ - ( \Re ( z ) + \Im ( z ) \tan \theta^{ ( r ) } + \frac{ r }{ 2 } \tan \theta^{ ( r ) } ) R }, \label{PofThm4.1-3}
\end{align}
where in the last inequality we use the fact that $ \Im ( z ) + \frac{ r }{ 2 } < 0 $. From $ \Re ( z ) + \Im ( z ) \tan \theta + \frac{ r }{ 2 } \tan \theta > 0 $ because $ ( w , z ) \in D_{ \theta^{ ( r ) } , \taurz } $, it follows that $ ( \ref{PofThm4.1-3} ) $ vanishes as $ R \to \infty $. Hence, we have
\begin{align}
\int_{ P_4 \cup P_5 } e^{ - z t } \prod_{ j = 1 }^r l_{ \chi_j } ( t ) t^{ w - 1 } dt = 2 \pi i \sum_{ \scriptsize{ \begin{array}{c}
p , m \\
p^m < M + \frac{ 1 }{ 2 }
\end{array} } } \underset{ t = im \log p }{ \mathrm{ Res } } e^{ - z t } \prod_{ j = 1 }^r l_{ \chi_j } ( t ) t^{ w - 1 }, \label{PofThm4.1-4}
\end{align}
where
\begin{align*}
P_4 \coloneqq & \left\{ - u + i u \tan \theta^{ ( r ) } \: \left| \: \frac{ U }{ \tan \theta^{ ( r ) } } \le u \le \lambda \cos \theta^{ ( r ) } \right. \right\} \\
& \cup \{ \lambda e^{ i \varphi } \ | \ - \theta^{ ( r ) } \le \varphi \le \pi - \theta^{ ( r ) } \} \\
& \cup \{ u - i u \tan \theta^{ ( r ) } \: | \: u \ge \lambda \cos \theta^{ ( r ) } \}, \\
P_5 \coloneqq & \left\{ \sigma + i U \: \left| \: \sigma \ge - \frac{ U }{ \tan \theta^{ ( r ) } } \right. \right\}
\end{align*}
and we go around the integral path in the counterclockwise direction. Next, we consider the limit of $ ( \ref{PofThm4.1-4} ) $ as $ M \to \infty $. Concerning the integral of the path $ P_5 $, we have
\begin{align}
\left| \int_{ P_5 } \right| & = \left| \int_\infty^{ - \frac{ U }{ \tan \theta^{ ( r ) } } } e^{ - z ( \sigma + i U ) } \prod_{ j = 1 }^r l_{ \chi_j } ( \sigma + i U ) ( \sigma + i U )^{ w - 1 } d \sigma \right| \notag \\
& \ll_w \int_{ - \frac{ U }{ \tan \theta^{ ( r ) } } }^\infty e^{ - \Re ( z ) \sigma + \Im ( z ) U } \left| \prod_{ j = 1 }^r l_{ \chi_j } ( \sigma + i U ) \right| \max \{ | \sigma | , U \}^{ \Re ( w ) - 1 } d \sigma \notag \\
& = \dsp{ \int_{ - \frac{ U }{ \tan \theta^{ ( r ) } } }^{ - 1 } } + \dsp{ \int_{ - 1 }^1 } + \dsp{ \int_1^\infty }. \label{PofThm4.1-5}
\end{align}
About the first term of $ ( \ref{PofThm4.1-5} ) $, by using Corollary \ref{Cor3.4} (iii) we can deduce
\begin{align}
& \int_{ - \frac{ U }{ \tan \theta^{ ( r ) } } }^{ - 1 } \notag \\
& \ll_r \int_{ - \frac{ U }{ \tan \theta^{ ( r ) } } }^{ - U } e^{ - \Re ( z ) \sigma + \Im ( z ) U } ( e^{ ( \varepsilon^{ ( r ) } \sigma + \frac{ U }{ 2 } ) r } + e^{ \taurz U r } ) ( - \sigma )^{ \Re ( w ) - 1 } d \sigma \notag \\
& \quad + \int_{ - U }^{ - 1 } e^{ - \Re ( z ) \sigma + \Im ( z ) U } ( e^{ ( \varepsilon^{ ( r ) } \sigma + \frac{ U }{ 2 } ) r } + e^{ \taurz U r } ) U^{ \Re ( w ) - 1 } d \sigma \notag \\
& \le \int_{ - \frac{ U }{ \tan \theta^{ ( r ) } } }^{ - 1 } e^{ - \Re ( z ) \sigma + \Im ( z ) U } ( e^{ ( \varepsilon^{ ( r ) } \sigma + \frac{ U }{ 2 } ) r } + e^{ \taurz U r } ) ( - \sigma )^{ \Re ( w ) - 1 } d \sigma \notag \\
& \quad + \int_{ - \frac{ U }{ \tan \theta^{ ( r ) } } }^{ - 1 } e^{ - \Re ( z ) \sigma + \Im ( z ) U } ( e^{ ( \varepsilon^{ ( r ) } \sigma + \frac{ U }{ 2 } ) r } + e^{ \taurz U r } ) U^{ \Re ( w ) - 1 } d \sigma \notag \\
& \le e^{ \Im ( z ) U } \left( \frac{ U }{ \tan \theta^{ ( r ) } } \right)^{ \Re ( w ) - 1 } \notag \\
& \qquad \times \int_{ - \frac{ U }{ \tan \theta^{ ( r ) } } }^{ - 1 } ( e^{ ( \varepsilon^{ ( r ) } r - \Re ( z ) ) \sigma + \frac{ U }{ 2 } r } + e^{ - \Re ( z ) \sigma + \taurz U r } ) d \sigma \notag \\
& \quad + e^{ \Im ( z ) U } U^{ \Re ( w ) - 1 } \int_{ - \frac{ U }{ \tan \theta^{ ( r ) } } }^{ - 1 } ( e^{ ( \varepsilon^{ ( r ) } r - \Re ( z ) ) \sigma + \frac{ U }{ 2 } r } + e^{ - \Re ( z ) \sigma + \taurz U r } ) d \sigma \notag \\
\begin{split}
\phantom{\int_{ - \frac{ U }{ \tan \theta^{ ( r ) } } }^{ - 1 } } &= e^{ \Im ( z ) U } \left( U^{ \Re ( w ) - 1 } + \left( \frac{ U }{ \tan \theta^{ ( r ) } } \right)^{ \Re ( w ) - 1 } \right) \\
& \quad \times \int_{ - \frac{ U }{ \tan \theta^{ ( r ) } } }^{ - 1 } ( e^{ ( \varepsilon^{ ( r ) } r - \Re ( z ) ) \sigma + \frac{ U }{ 2 } r } + e^{ - \Re ( z ) \sigma + \taurz U r } ) d \sigma.
\end{split} \label{PofThm4.1-6}
\end{align}
Since
\begin{align*}
\int_{ - \frac{ U }{ \tan \theta^{ ( r ) } } }^{ - 1 } e^{ A \sigma } d \sigma \ll_A \left\{ \begin{array}{@{}ll}
1 & ( A > 0 ), \\
\dsp{ \frac{ U }{ \tan \theta^{ ( r ) } } } & ( A = 0 ), \\[+10pt]
e^{ - A \frac{ U }{ \tan \theta^{ ( r ) } } } & ( A < 0 )
\end{array} \right\} \ll \frac{ U }{ \tan \theta^{ ( r ) } } ( 1 + e^{ - A \frac{ U }{ \tan \theta^{ ( r ) } } } ),
\end{align*}
we have
\begin{align*}
( \ref{PofThm4.1-6} ) & \ll \left( U^{ \Re ( w ) - 1 } + \left( \frac{ U }{ \tan \theta^{ ( r ) } } \right)^{ \Re ( w ) - 1 } \right) \left( \frac{ U }{ \tan \theta^{ ( r ) } } \right) \\
& \times \left( e^{ \left( \Im ( z ) + \frac{ r }{ 2 } \right) U } + e^{ \left( \Re ( z ) + \Im ( z ) \tan \theta^{ ( r ) } + \frac{ r }{ 2 } \tan \theta^{ ( r ) } - \varepsilon r \right) \frac{ U }{ \tan \theta^{ ( r ) } } } \right. \\
& \left. \phantom{ \times ( } + e^{ \left( \Im ( z ) + \taurz r \right) U } + e^{ \left( \Re ( z ) + \Im ( z ) \tan \theta^{ ( r ) } + \taurz r \tan \theta^{ ( r ) } \right) \frac{ U }{ \tan \theta^{ ( r ) } } } \right) \\
& \to 0 \quad ( M \to \infty ),
\end{align*}
where in the last limit we use the fact that
\begin{align}
\Im ( z ) + \taurz r < \Im ( z ) + \frac{ r }{ 2 } < 0 \label{PofThm4.1-7}
\end{align}
and
\begin{align*}
\begin{cases}
\dsp{ \Re ( z ) + \Im ( z ) \tan \theta^{ ( r ) } + \frac{ r }{ 2 } \tan \theta^{ ( r ) } - \varepsilon^{ ( r ) } r < 0 }, \\
\dsp{ \Re ( z ) + \Im ( z ) \tan \theta^{ ( r ) } + r \taurz \tan \theta^{ ( r ) } < 0 }
\end{cases}
\end{align*}
because $ ( w , z ) \in D_{ \theta^{ ( r ) } , \taurz } $ and $ \ \tan \theta^{ ( r ) } < \varepsilon^{ ( r ) } $. About the second term of $ ( \ref{PofThm4.1-5} ) $, by using Corollary \ref{Cor3.4} (iii) we have
\begin{align*}
\int_{ - 1 }^1 & \ll_r \int_{ - 1 }^1 e^{ - \Re ( z ) \sigma + \Im ( z ) U } ( U^2 e^{ ( \varepsilon^{ ( r ) } + \frac{ 1 }{ 2 } ) U } )^r U^{ \Re ( w ) - 1 } d \sigma \\
& \ll_z U^{ \Re ( w ) + 2 r - 1 } e^{ ( \Im ( z ) + ( \frac{ 1 }{ 2 } + \varepsilon^{ ( r ) } ) r ) U } \\
& \to 0 \quad ( M \to \infty ),
\end{align*}
where in the last limit we use $ \Im ( z ) + ( \frac{ 1 }{ 2 } + \varepsilon^{ ( r ) } ) r < 0 $. About the third term of $ ( \ref{PofThm4.1-5} ) $, by Corollary \ref{Cor3.4} (iii) we have
\begin{align}
\int_1^\infty & \ll_r \int_1^\infty e^{ - \Re ( z ) \sigma + \Im ( z ) U } e^{ \frac{ U }{ 2 } r } \max \{ | \sigma | , U \}^{ \Re ( w ) - 1 } d \sigma \notag \\
& = e^{ ( \Im ( z ) + \frac{ r }{ 2 } ) U } \left( \int_1^U e^{ - \Re ( z ) \sigma } U^{ \Re ( w ) - 1 } d \sigma + \int_U^\infty e^{ - \Re ( z ) \sigma } \sigma ^{ \Re ( w ) - 1 } d \sigma \right) \label{PofThm4.1-8} \\
& \ll e^{ ( \Im ( z ) + \frac{ r }{ 2 } ) U } ( U^{ \Re ( w ) - 1 } + 1 ) \label{PofThm4.1-9} \\
& \to 0 \quad ( M \to \infty ), \notag
\end{align}
where in transforming $ ( \ref{PofThm4.1-8} ) $ into $ ( \ref{PofThm4.1-9} ) $ we use $ \Re ( z ) > - ( \Im ( z ) + \frac{ r }{ 2 } ) \tan \theta^{ ( r ) } > 0 $ because $ ( w , z ) \in D_{ \theta^{ ( r ) } , \taurz } $, and in the last limit we use $ ( \ref{PofThm4.1-7} ) $. Hence, we obtain
\begin{align*}
F_{ \theta^{ ( r ) } } ( w , z , \{ \chi_j \}_{ j = 1 }^r ; \lambda ) = R_{ \theta^{ ( r ) } } ( w , z , \{ \chi_j \}_{ j = 1 }^r ).
\end{align*}

This completes the proof.
\end{proof}
In the following sections, it is necessary that the left-hand side of $ (\ref{Thm4.1-1}) $ be a meromorphic function of $ w $ at $ w = 0 $. To obtain the property we show a lemma. It is the generalization of the lemma proved by Hirano, Kurokawa and Wakayama{\upshape ~\cite[Lemma 1]{HKW}}.

Let $ \psi \in ( - \pi , \pi ] $ be any fixed real number and $ f(t) $ be a locally integrable function on $ \{ r e^{ i \psi } | r \in ( 0 , \infty ) \, \} $. We define
\begin{align*}
M_{ \psi } [ f : w] \coloneqq \int_0^{ \infty e^{ i \psi } } f ( t ) t^{ w - 1 } dt .
\end{align*}
Now, assume that $ f ( t ) $ satisfies
\begin{align*}
f( t ) = \begin{cases}
O ( t^{ - a + \varepsilon } ) & ( t \to 0 ), \\
O( t^{ - b - \varepsilon } ) & ( t \to \infty e^{ i \psi } )
\end{cases}
\end{align*}
for $ a , b \in \mathbb{ R } $ with $ a < b $ and $ M_\psi [ f : w ] $ converges absolutely, so is an analytic function, in $ a < \Re ( w ) < b $. Then, the following lemma holds.
\begin{lem} \label{Lem4.2}
Suppose that $ f ( t ) $ has the following approximate behaviors as $ t \to 0 $ and $ t \to \infty e^{ t \psi } $ : 
\begin{align}
f ( t ) \sim \begin{cases}
\dsp{ \sum_{ k = 0 }^\infty \sum_{ n = 0 }^{ N_1 ( k ) } A_1 ( n , k ) ( \log t )^n t^{ a_1 ( k ) } } & ( t \to 0 ), \\[+15pt]
\dsp{ \sum_{ k = 0 }^\infty \sum_{ n = 0 }^{ N_2 ( k ) } A_2 ( n , k ) ( \log t )^n t^{ a_2 ( k ) } } & ( t \to \infty e^{ i \psi } ),
\end{cases} \label{Lem4.2-1}%
\end{align}
where $ N_i ( k ) $ are non-negative and finite integers for each $ k $ and $ a_1 ( k ) $ and $ a_2 ( k ) $ are complex sequences with $ \Re ( a_1 ( k ) ) $ and $ \Re ( a_2 ( k ) ) $ monotonically increasing. Then $ M_\psi [ f : w ] $ has a meromorphic continuation into $ w \in \mathbb{ C } $ with poles at $ w = - a_1 ( k ) $ and $ w = - a_2 ( k ) $ for each $ k $. Especially the poles at $ s = - a_i ( k ) $ are simple if $ N_i ( k ) = 0 $. 
\end{lem}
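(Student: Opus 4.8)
The plan is to prove Lemma \ref{Lem4.2} by the classical Mellin-transform device of subtracting truncated asymptotic expansions, adapted to the oblique ray $\{ r e^{ i \psi } \mid r > 0 \}$ and to the logarithmic factors $( \log t )^n$. First I would straighten the ray by the substitution $t = e^{ i \psi } \rho$ with $\rho \in ( 0 , \infty )$, which gives
\begin{align*}
M_\psi [ f : w ] = e^{ i \psi w } \int_0^\infty f ( e^{ i \psi } \rho ) \rho^{ w - 1 } d \rho .
\end{align*}
The prefactor $e^{ i \psi w }$ is entire and nowhere vanishing, so it affects neither the existence of the continuation nor the location and order of the poles. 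Moreover $\log ( e^{ i \psi } \rho ) = \log \rho + i \psi$, so after expanding $( \log \rho + i \psi )^n$ by the binomial theorem the two expansions in $( \ref{Lem4.2-1} )$ pass to asymptotic expansions of $g ( \rho ) := f ( e^{ i \psi } \rho )$ of exactly the same shape, with the same exponents $a_1 ( k )$, $a_2 ( k )$ and the same maximal log-degrees $N_1 ( k )$, $N_2 ( k )$ (only the coefficients are altered, which is irrelevant for poles). Thus it suffices to treat the real Mellin transform of $g$, which I would split as $\int_0^\infty g ( \rho ) \rho^{ w - 1 } d \rho = \int_0^1 + \int_1^\infty$ and handle the two ranges symmetrically.

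For the range $( 0 , 1 )$, fix $K$ and let $S_1^{ ( K ) } ( \rho )$ be the partial sum comprising the blocks $k = 0 , \dots , K$ of the expansion of $g$ at $\rho = 0$ (whose exponents and log-degrees are $a_1 ( k )$ and $N_1 ( k )$). The defining property of an asymptotic expansion gives $g ( \rho ) - S_1^{ ( K ) } ( \rho ) = O ( \rho^{ \Re ( a_1 ( K + 1 ) ) - \varepsilon } )$ as $\rho \to 0$ (the logarithmic factor is absorbed since $\rho^{ - \varepsilon }$ dominates any power of $\log \rho$ there), so $\int_0^1 ( g - S_1^{ ( K ) } ) \rho^{ w - 1 } d \rho$ converges absolutely and is holomorphic in the half-plane $\Re ( w ) > - \Re ( a_1 ( K + 1 ) )$. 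The subtracted terms integrate explicitly: differentiating $\int_0^1 \rho^{ s - 1 } d \rho = 1 / s$ repeatedly in $s$ yields $\int_0^1 ( \log \rho )^n \rho^{ s - 1 } d \rho = ( - 1 )^n n! \, s^{ - n - 1 }$, so each summand of $\int_0^1 S_1^{ ( K ) } \rho^{ w - 1 } d \rho$ is meromorphic on all of $\mathbb{ C }$ with its only pole at $w = - a_1 ( k )$, of order $n + 1$. Since $\Re ( a_1 ( k ) )$ increases without bound, letting $K \to \infty$ and invoking the identity theorem to patch the pieces over their overlapping domains produces a meromorphic continuation of $\int_0^1 g ( \rho ) \rho^{ w - 1 } d \rho$ to $\mathbb{ C }$, with poles only at the $w = - a_1 ( k )$ and with the order at $w = - a_1 ( k )$ equal to $N_1 ( k ) + 1$; in particular the pole is simple when $N_1 ( k ) = 0$.

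The range $( 1 , \infty )$ is handled identically with the second expansion: subtracting further terms of it enlarges the half-plane of convergence of the corresponding remainder integral (the successive terms being of strictly smaller order as $\rho \to \infty$), while the subtracted terms, via $\int_1^\infty ( \log \rho )^n \rho^{ s - 1 } d \rho = - ( - 1 )^n n! \, s^{ - n - 1 }$ (valid for $\Re ( s ) < 0$ and then continued), give meromorphic functions with poles exactly at $w = - a_2 ( k )$ of order $N_2 ( k ) + 1$. Adding the two continued pieces and restoring the entire factor $e^{ i \psi w }$ proves the lemma.

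The step I expect to require the most care is the interface between the asymptotic information and the analytic continuation: I must verify that the stated expansions $( \ref{Lem4.2-1} )$ furnish, for each fixed truncation $K$, a genuine power-saving remainder estimate (including the logarithmic refinement) that is integrable against $\rho^{ w - 1 }$ on the relevant range, and that as $K$ varies the continuations agree on overlaps so that no spurious poles are introduced and the order of each genuine pole is correctly read off from $N_i ( k )$. By contrast, the oblique ray and the $i \psi$ shift inside $\log t$ are mere bookkeeping, dealt with once and for all by the initial substitution.
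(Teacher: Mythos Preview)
Your proof is correct and follows essentially the same strategy as the paper: subtract a truncation of the asymptotic expansion on each of the two ranges, observe that the remainder integral is holomorphic on a half-plane that grows with the truncation level, and compute the subtracted terms explicitly to exhibit the poles at $w=-a_i(k)$ of order at most $N_i(k)+1$. The only cosmetic difference is that you first straighten the ray by the substitution $t=e^{i\psi}\rho$ and then split at $\rho=1$, whereas the paper works directly on the oblique ray, splits at $t=e^{i\psi}$, and evaluates $\int_0^{e^{i\psi}} (\log t)^n\, t^{a_1(k)+w-1}\,dt$ by repeated integration by parts---which, after expanding $(\log\rho+i\psi)^n$, is exactly your formula $\int_0^1(\log\rho)^n\rho^{s-1}\,d\rho=(-1)^n n!\,s^{-n-1}$.
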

\begin{proof}[\underline{\textbf{Proof of Lemma \ref{Lem4.2}}}]
First we define $ f_m ( t ) $ as
\begin{align*}
f_m ( t ) \coloneqq f ( t ) - \sum_{ k = 0 }^m \sum_{ n = 0 }^{ N_1 ( k ) } A_1 ( n , k ) ( \log t )^n t^{ a_1 ( k ) }.
\end{align*}
Then, in $ a < \Re ( w ) < b $, we have
\begin{align}
\begin{split}
M_\psi [ f : w ] &= \int_0^{ e^{ i \psi } } f_m ( t ) t^{ w - 1 } dt + \int_0^{ e^{ i \psi } } \sum_{ k = 0 }^m \sum_{ n = 0 }^{ N_1 ( k ) } A_1 ( n , k ) ( \log t )^n t^{ a_1 ( k ) + w - 1 } dt \\
& \quad + \int_{ e^{ i \psi } }^{ \infty e^{ i \psi } } f ( t ) t^{ w - 1 } dt.
\end{split} \label{PofLem4.2-1}
\end{align}
The first and third terms of the right-hand side of $ ( \ref{PofLem4.2-1} ) $ are analytic function of $ w $ in $ - \Re ( a_1 ( m + 1 ) ) < \Re ( w ) $ and in $ \Re ( w ) < b $ respectively. The second term becomes
\begin{align*}
\sum_{ k = 0 }^m \sum_{ n = 0 }^{ N_1 ( k ) } A_1 ( n , k ) \int_0^{ e^{ i \psi } } ( \log t )^n t^{ a_1 ( k ) + w - 1 } dt,
\end{align*}
and then by partial integration we can transform it into 
\begin{align*}
\sum_{ k = 0 }^m \sum_{ n = 0 }^{ N_1 ( k ) } \sum_{ r = 0 }^n A_1 ( n , k ) \frac{ ( - 1 )^r n ( n - 1 ) \cdots ( n - r + 1 ) ( i \psi )^{ n - r } e^{ i \psi ( w + a_1 ( k ) ) } }{ ( w + a_1 ( k ) )^{ r + 1 } }.
\end{align*}
Hence, we see that $ M_\psi [ f : w ] $ is a meromorphic function of $ w $ with having  poles at $ w = -a_1 ( k ) $ in $ - \Re ( a_1 ( m + 1 ) ) < \Re ( w ) < b $, especially the orders of which at $ s = - a_1 ( k ) $ are simple if $ N_1 ( k ) = 0 $. Since $ \Re ( a_1 ( m + 1 ) ) \to \infty \: ( m \to \infty ) $, it is shown that the meromorphy of $ M_\psi [ f : w ] $ in the left half plane $ \Re ( w ) < b $.

In a similar way, we can obtain a meromorphic continuation into the right half plane $ b \le \Re ( w ) $.
\end{proof}
From Lemma 4.2 the meromophy of the left-hand side of $ ( \ref{Thm4.1-1} ) $ follows.
\begin{cor} \label{Cor4.3}
If $ \left( \frac{ 1 }{ 2 } + \taurz \right) r \tan \theta^{ ( r ) } < \Re ( s ) \tan \theta^{ ( r ) } - \Im ( s ) < r \tan \theta^{ ( r ) } $ and $ \Re ( s ) > r ( 1 + \varepsilon^{ ( r ) } ) $, then $ L_{ \theta^{ ( r ) } }^{ ( 1 ) } \left( w , - i \left( s - \frac{ r }{ 2 } \right) , \{ \chi_j \}_{ j = 1 }^r \right) $ and \\
$ L_{ \theta^{ ( r ) } }^{ ( 2 ) } \left( w , - i \left( s - \frac{ r }{ 2 } \right) , \{ \chi_j \}_{ j = 1 }^r \right) $ are meromorphic functions of $ w $ on the whole $ \mathbb{ C } $.
\end{cor}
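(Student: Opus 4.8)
The plan is to realize each of $L_{ \theta^{ ( r ) } }^{ ( 1 ) }$ and $L_{ \theta^{ ( r ) } }^{ ( 2 ) }$ as an entire factor times a Mellin transform $M_{ - \theta^{ ( r ) } } [ g_i : w ]$ along the ray $\arg t = - \theta^{ ( r ) } \in ( - \frac{ \pi }{ 4 } , 0 )$, and then to invoke Lemma \ref{Lem4.2} with $\psi = - \theta^{ ( r ) }$. Writing $z = - i ( s - \frac{ r }{ 2 } )$, one has $\Re ( z ) = \Im ( s )$ and $\Im ( z ) = \frac{ r }{ 2 } - \Re ( s )$, and a direct computation shows that the hypotheses $\left( \frac{ 1 }{ 2 } + \taurz \right) r \tan \theta^{ ( r ) } < \Re ( s ) \tan \theta^{ ( r ) } - \Im ( s ) < r \tan \theta^{ ( r ) }$ and $\Re ( s ) > r ( 1 + \varepsilon^{ ( r ) } )$ are exactly equivalent to $( w , z ) \in D_{ \theta^{ ( r ) } , \taurz }$ together with $\Im ( z ) < - \left( \frac{ 1 }{ 2 } + \varepsilon^{ ( r ) } \right) r$; hence every inequality used in Theorem \ref{Thm4.1} is available. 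For the integrand of $L_{ \theta^{ ( r ) } }^{ ( 2 ) }$ I would first simplify: replacing $t$ by $- t$ in the functional equation of Theorem \ref{Thm3.1} and using the identity $\frac{ i e^{ \frac{ \chi_j ( - 1 ) }{ 2 } i t } }{ 2 \sin t } = - \sum_{ n = 1 }^\infty e^{ - \left( 2 n - 1 - \frac{ \chi_j ( - 1 ) }{ 2 } \right) i t }$, exactly as in the derivation of $L_{ \theta^{ ( r ) } }^{ ( 2 ) }$ inside the proof of Theorem \ref{Thm4.1}, one obtains
\begin{align*}
l_{ \bar{ \chi_j } } ( t ) + \sum_{ n = 1 }^\infty e^{ - \left( 2 n - 1 - \frac{ \chi_j ( - 1 ) }{ 2 } \right) i t } + \mutaujz e^{ i \taucjz t } + \mutaujz e^{ - i \taucjz t } + \mujz = - l_{ \chi_j } ( - t ) ,
\end{align*}
so that the two transforms have integrands of the same shape, $g_1 ( t ) := e^{ - z t } \prod_{ j = 1 }^r l_{ \chi_j } ( t )$ and $g_2 ( t ) := ( - 1 )^r e^{ z t } \prod_{ j = 1 }^r l_{ \chi_j } ( - t )$, differing only by the reflection $t \mapsto - t$ and the sign of $z$; the prefactors $\frac{ 1 }{ \Gamma ( w ) }$ and $\frac{ ( - 1 )^{ r - 1 } e^{ \pi i w } }{ \Gamma ( w ) }$ are entire.

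Next I would record the behaviour of $g_1$ and $g_2$ as $t \to 0$. By the exact expression in Theorem \ref{Thm3.2} (ii), every term of $l_{ \chi_j } ( t )$ is near the origin either holomorphic or elementary (a constant multiple of $\frac{ 1 }{ t }$, $\frac{ 1 }{ \sin t }$, or $\frac{ \log t }{ \sin \frac{ t }{ 2 } }$) with explicit Laurent-times-logarithm expansion, while the single non-elementary term $H ( t )$ is handled by Lemma \ref{Lem2.4}, which exhibits $H ( t ) + \frac{ e^{ - i ( \alpha + \frac{ 1 }{ 2 } ) t } }{ 2 \sin \frac{ t }{ 2 } } \log t$ as meromorphic. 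Consequently each $l_{ \chi_j } ( t )$ admits an asymptotic expansion $\sum_{ k \ge 0 } ( B_{ j , k } + C_{ j , k } \log t ) \, t^{ k - 1 }$ carrying at most a first power of $\log t$; multiplying $r$ such expansions and the entire factor $e^{ \mp z t }$ produces expansions of $g_1$ and $g_2$ of exactly the form $( \ref{Lem4.2-1} )$ with $a_1 ( k ) = - r + k$ and $N_1 ( k ) \le r$. In particular the leading term is of order $t^{ - r } ( \log t )^r$, so the Mellin integral converges absolutely for $\Re ( w ) > r$ and the convergence abscissa is $a = r$.

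I would then establish exponential decay on the ray $\arg t = - \theta^{ ( r ) }$, which settles the behaviour at infinity. Writing $t = u e^{ - i \theta^{ ( r ) } }$, Lemma \ref{Lem3.3} (i) (the estimate of Corollary \ref{Cor3.4} (i), now with $\theta^{ ( r ) }$ in place of $\theta_j$, which is legitimate since $\tan \theta^{ ( r ) } \le \tan \theta_j < \varepsilon_j$) gives $\prod_{ j = 1 }^r l_{ \chi_j } ( t ) = O ( e^{ - \frac{ r }{ 2 } u \sin \theta^{ ( r ) } } )$, while $| e^{ - z t } | = e^{ - u \Re ( z e^{ - i \theta^{ ( r ) } } ) }$; the lower bound $\Re ( z e^{ - i \theta^{ ( r ) } } ) > - \frac{ r }{ 2 } \sin \theta^{ ( r ) }$ defining $D_{ \theta^{ ( r ) } , \taurz }$ then forces $g_1$ to decay exponentially. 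Dually, Lemma \ref{Lem3.3} (ii) yields $\prod_{ j = 1 }^r l_{ \chi_j } ( - t ) = O ( e^{ r \taurz u \sin \theta^{ ( r ) } } )$, and the upper bound $\Re ( z e^{ - i \theta^{ ( r ) } } ) < - r \taurz \sin \theta^{ ( r ) }$ makes $g_2$ decay exponentially as well. Hence the tail $\int_{ e^{ - i \theta^{ ( r ) } } }^{ \infty e^{ - i \theta^{ ( r ) } } } g_i ( t ) t^{ w - 1 } dt$ is entire, so one may take $b = + \infty$ in Lemma \ref{Lem4.2} and only the expansion at $0$ governs the continuation. The left-half-plane part of Lemma \ref{Lem4.2} then continues $M_{ - \theta^{ ( r ) } } [ g_i : w ]$ meromorphically to all of $\mathbb{ C }$, with possible poles only at $w = - a_1 ( k ) = r - k$ $( k \ge 0 )$; multiplying by the entire factors $\frac{ 1 }{ \Gamma ( w ) }$ and $\frac{ ( - 1 )^{ r - 1 } e^{ \pi i w } }{ \Gamma ( w ) }$ recovers $L_{ \theta^{ ( r ) } }^{ ( 1 ) }$ and $L_{ \theta^{ ( r ) } }^{ ( 2 ) }$ as meromorphic functions of $w$ on $\mathbb{ C }$.

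I expect the main obstacle to be the second step: verifying that $\prod_{ j = 1 }^r l_{ \chi_j } ( t )$ genuinely admits a \emph{full} expansion of the form $( \ref{Lem4.2-1} )$ rather than merely the leading term recorded in Theorem \ref{Thm3.2} (iii). This forces one to expand every summand of Theorem \ref{Thm3.2} (ii) at $t = 0$ — in particular to confirm, via Lemma \ref{Lem2.4}, that $H ( t )$ contributes $\log t$ only to the first power and combines with the $\frac{ \log t }{ \sin \frac{ t }{ 2 } }$ and $\frac{ 1 }{ \sin t }$ terms into a single Laurent-times-logarithm series — and then to control how the logarithmic factors accumulate under the $r$-fold product so that the bound $N_1 ( k ) \le r$ remains finite for every $k$.
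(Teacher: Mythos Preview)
Your proposal is correct and follows essentially the same route as the paper: reduce to Lemma~\ref{Lem4.2} by verifying the asymptotic expansion of the integrand at $t\to 0$ and exponential decay along the ray $\arg t=-\theta^{(r)}$, the latter already established inside the proof of Theorem~\ref{Thm4.1} via Corollary~\ref{Cor3.4}(i). Two minor remarks: first, your rewriting of the $L^{(2)}$ integrand as $(-1)^r e^{zt}\prod_j l_{\chi_j}(-t)$ via Theorem~\ref{Thm3.1} is a clean simplification that the paper leaves implicit (it simply asserts both integrands satisfy~\eqref{Lem4.2-1}); second, you are actually more careful than the paper on the main point, since the paper cites only Theorem~\ref{Thm3.2}(iii) for the $t\to 0$ behaviour, which strictly speaking gives just the leading term with an $O(1)$ remainder, whereas you correctly trace the \emph{full} expansion back to the explicit formula of Theorem~\ref{Thm3.2}(ii) together with Lemma~\ref{Lem2.4} for $H(t)$ --- exactly the ``obstacle'' you anticipated, and your outline of how to resolve it is sound.
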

\begin{proof}[\underline{\textbf{Proof of Corollary \ref{Cor4.3}}}]
By the consideration about $ F_{ \theta^{ ( r ) } } ( w , z , \{ \chi_j \}_{ j = 1 }^r ; \lambda ) $ in the proof of Theorem \ref{Thm4.1}, $ L_{ \theta^{ ( r ) } }^{ ( 1 ) } ( w , z , \{ \chi_j \}_{ j = 1 }^r ) $ and $ L_{ \theta^{ ( r ) } }^{ ( 2 ) } ( w , z , \{ \chi_j \}_{ j = 1 }^r ) $ are holomorphic functions of $ w $ under the assumption that
\begin{align*}
( w , z ) \in D_{ \theta^{ ( r ) } , \taurz }, \Im ( z ) < - \left( \frac{ 1 }{ 2 } + \varepsilon^{ ( r ) } \right) r \mbox{ and } \Re ( w ) > r.
\end{align*}
We can remove $ \Re ( w ) > r $ because it follows from Theorem \ref{Thm3.2} (iii) that
\begin{align*}
e^{ - z t } \prod_{ j = 1 }^r l_{ \chi_j } ( t )
\end{align*}
and
\begin{align*}
& e^{ zt } \prod_{ j = 1 }^r \left( l_{ \chi_j } ( t ) + \sum_{ n = 1 }^\infty e^{ - \left( 2 n - 1 - \frac{ \chi_j ( -1 ) }{ 2 } \right) i t } \right. \\
& \left. \phantom{ \sum_{ n = 1 }^\infty \qquad } + \mutaujz e^{ i \taucjz t } + \mutaujz e^{ - i \taucjz t } + \mujz \right)
\end{align*}
which appear in $ L_{ \theta^{ ( r ) } }^{ ( i ) } ( w , z , \{ \chi_j \}_{ j = 1 }^r ) \, ( i = 1 , 2 ) $ satisfy the condition concerning $ t \to 0 $ in (\ref{Lem4.2-1}). By putting $ z = - i \left( s - \frac{ r }{ 2 } \right) $ we obtain the desired results.
\end{proof}
\section{\textbf{The zeta regularized product expression of $ \bm{ L ( s , \chi_1 ) } $}} \label{Sec5}
Our goal in this section is to prove Theorem \ref{Thm1.1}. We obtain an equation which links the ``factors series'' of $ L ( s , \chi_1 ) $ to prime numbers by calculating the both sides of $ ( \ref{Thm4.1-1} ) $ with $ r = 1 $ and then prove Theorem \ref{Thm1.1} .
\subsection{The ``key equation'' for $ r = 1 $}
\begin{lem} \label{Lem5.1}
Let $ ( w , z ) \in D_{ \theta^{ ( 1 ) } , \tauoz } $ satisfy $ \Im ( z ) < - ( \frac{ 1 }{ 2 } + \varepsilon^{ ( 1 ) } ) $ and $ \Re ( w ) > 1 $. Then,
\begin{align*}
& L_{ \theta^{ ( 1 ) } }^{ ( 1 ) } ( w , z , \chi_1 ) = \sum_{ \Re ( \tauco ) > 0 } \frac{ 1 }{ ( z + \tauco )^w }, \\
&L_{ \theta^{ ( 1 ) } }^{ ( 2 ) } ( w , z , \chi_1 ) \\
&= e^{ \pi i w } \left( \sum_{ \Re ( \taucobar ) > 0 } \frac{ 1 }{ ( \taucobar - z )^w } + \sum_{ n = 1 }^\infty \frac{ 1 }{ \left( - z + \left( 2 n - 1 - \frac{ \chi_1 ( -1 ) }{ 2 } \right) i \right)^w } \right. \\
&\left. \qquad \qquad + \frac{ \mutauoz }{ \left( - z - i \taucoz \right)^w } + \frac{ \mutauoz }{ \left( - z + i \taucoz \right)^w } + \frac{ \muoz }{ ( - z  )^w } \right).
\end{align*}
\end{lem}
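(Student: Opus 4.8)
The plan is to substitute the defining series $ ( \ref{lcDef} ) $ of $ l_{ \chi_1 } ( t ) $ and $ l_{ \bar{ \chi_1 } } ( t ) $ into the integrals $ ( \ref{L1Def} ) $ and $ ( \ref{L2Def} ) $ specialised to $ r = 1 $ (so that $ ( - 1 )^{ r - 1 } = 1 $ and $ \theta^{ ( 1 ) } = \theta_1 $), to interchange summation and integration, and to evaluate each summand by Lemma \ref{Lem2.8}. Along the ray of integration every summand of $ e^{ - z t } l_{ \chi_1 } ( t ) $, and of the bracketed expression multiplying $ e^{ z t } $ in $ ( \ref{L2Def} ) $, is a pure exponential $ e^{ - \nu t } $ for an appropriate $ \nu $, and Lemma \ref{Lem2.8} with $ \psi = - \theta^{ ( 1 ) } $ gives
\begin{align*}
\frac{ 1 }{ \Gamma ( w ) } \int_0^{ \infty e^{ - i \theta^{ ( 1 ) } } } e^{ - \nu t } t^{ w - 1 } dt = \frac{ 1 }{ \nu^w } ,
\end{align*}
valid whenever $ \Re ( \nu e^{ - i \theta^{ ( 1 ) } } ) > 0 $. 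Summing the resulting powers $ \nu^{ - w } $ reproduces the two claimed closed forms; the overall factor $ e^{ \pi i w } $ in $ L_{ \theta^{ ( 1 ) } }^{ ( 2 ) } $ is already present in $ ( \ref{L2Def} ) $ and simply passes through.

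The bookkeeping of the $ \nu $'s is then routine. In $ L_{ \theta^{ ( 1 ) } }^{ ( 1 ) } $ the summand $ e^{ - \tauco t } $ (over $ \Re ( \tauco ) > 0 $) gives $ \nu = z + \tauco $. In $ L_{ \theta^{ ( 1 ) } }^{ ( 2 ) } $ the series $ l_{ \bar{ \chi_1 } } ( t ) $ gives $ \nu = \taucobar - z $, the geometric series gives $ \nu = - z + ( 2 n - 1 - \frac{ \chi_1 ( - 1 ) }{ 2 } ) i $ (the coefficient of $ i $ being strictly positive for every $ n \ge 1 $ in both parities of $ \chi_1 ( - 1 ) $), the two terms $ \mutauoz e^{ \pm i \taucoz t } $ give $ \nu = - z \mp i \taucoz $, and the constant $ \muoz $ gives $ \nu = - z $. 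Here I would record that for $ r = 1 $ one has $ \tauoz = \taucoz $, so the band defining $ D_{ \theta^{ ( 1 ) } , \tauoz } $ is controlled by $ \taucoz $.

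The real content, and the step I expect to be the main obstacle, is checking the hypothesis $ \Re ( \nu e^{ - i \theta^{ ( 1 ) } } ) > 0 $ of Lemma \ref{Lem2.8} for every $ \nu $ above, since this is exactly what licenses the term-by-term evaluation. On $ D_{ \theta^{ ( 1 ) } , \tauoz } $ the defining band reads $ - \frac{ 1 }{ 2 } \sin \theta^{ ( 1 ) } < \Re ( z e^{ - i \theta^{ ( 1 ) } } ) < - \tauoz \sin \theta^{ ( 1 ) } $. Because every imaginary zero satisfies $ \Re ( \tauco ) \ge \taucom > \varepsilon_1 > \tan \theta^{ ( 1 ) } $ and $ | \Im ( \tauco ) | \le \frac{ 1 }{ 2 } $ (the critical strip, via $ \rhoco = \frac{ 1 }{ 2 } + i \tauco $), one has $ \Re ( \tauco e^{ - i \theta^{ ( 1 ) } } ) \ge \cos \theta^{ ( 1 ) } ( \varepsilon_1 - \frac{ 1 }{ 2 } \tan \theta^{ ( 1 ) } ) > 0 $, and adding the lower band bound yields $ \Re ( ( z + \tauco ) e^{ - i \theta^{ ( 1 ) } } ) > \cos \theta^{ ( 1 ) } ( \varepsilon_1 - \tan \theta^{ ( 1 ) } ) > 0 $; the case of $ \taucobar - z $ is identical, using the upper band bound. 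For the remaining $ \nu $'s built from $ - z $ I would use $ \Re ( i e^{ - i \theta^{ ( 1 ) } } ) = \sin \theta^{ ( 1 ) } > 0 $ together with $ - \Re ( z e^{ - i \theta^{ ( 1 ) } } ) > \tauoz \sin \theta^{ ( 1 ) } $; the delicate case is $ \nu = - z - i \taucoz $, where the estimate collapses to $ \Re ( \nu e^{ - i \theta^{ ( 1 ) } } ) > ( \tauoz - \taucoz ) \sin \theta^{ ( 1 ) } = 0 $, so the \emph{strictness} of the upper band cutoff (the reason it is placed at $ - \tauoz \sin \theta^{ ( 1 ) } $) is precisely what rescues this term. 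The surviving hypothesis $ \Im ( z ) < - ( \frac{ 1 }{ 2 } + \varepsilon^{ ( 1 ) } ) $ is inherited from Theorem \ref{Thm4.1} and, on this region, merely reflects $ \Re ( z ) > 0 $. Finally I would justify the interchange of $ \sum $ and $ \int $ by absolute convergence: along $ t = u e^{ - i \theta^{ ( 1 ) } } $ one has $ \int_0^\infty | e^{ - \nu t } t^{ w - 1 } | \, du = e^{ \theta^{ ( 1 ) } \Im ( w ) } \Gamma ( \Re ( w ) ) [ \Re ( \nu e^{ - i \theta^{ ( 1 ) } } ) ]^{ - \Re ( w ) } $, and since $ \Re ( \nu e^{ - i \theta^{ ( 1 ) } } ) \sim \Re ( \tauco ) \cos \theta^{ ( 1 ) } $ for the high zeros, the standard zero-counting bound makes $ \sum_{ \Re ( \tauco ) > 0 } [ \Re ( ( z + \tauco ) e^{ - i \theta^{ ( 1 ) } } ) ]^{ - \Re ( w ) } < \infty $ for $ \Re ( w ) > 1 $, so Fubini applies; the geometric series is handled identically, and the $ \mutauoz $- and $ \muoz $-terms are finite sums.
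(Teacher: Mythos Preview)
Your proposal is correct and follows essentially the same approach as the paper: substitute the series for $l_{\chi_1}$ (resp.\ the bracketed sum for $l_{\bar\chi_1}$) into $(\ref{L1Def})$ (resp.\ $(\ref{L2Def})$), interchange, and apply Lemma~\ref{Lem2.8} with $\psi=-\theta^{(1)}$ term by term. The paper carries out exactly your verification $\Re(z+\tauco)+\Im(z+\tauco)\tan\theta^{(1)}>\varepsilon^{(1)}-\tan\theta^{(1)}>0$ for $L^{(1)}$ and then dismisses $L^{(2)}$ with ``in a similar way''; your write-up is in fact more thorough than the paper's, in that you explicitly isolate the borderline term $\nu=-z-i\taucoz$ and observe that it is precisely the \emph{strict} upper cutoff $\Re(ze^{-i\theta^{(1)}})<-\tauoz\sin\theta^{(1)}$ in the definition of $D_{\theta^{(1)},\tauoz}$ that saves it, and you also supply the Fubini justification via the zero-counting estimate that the paper leaves implicit.
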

\begin{proof}[\underline{\textbf{Proof of Lemma \ref{Lem5.1}}}]
Since $ ( w , z ) \in D_{ \theta^{ ( 1 ) } , \tauoz } $ and $ \Re ( \tauco ) > \varepsilon^{ ( 1 ) } > \tan \theta^{ ( 1 ) } $, we have
\begin{align*}
& \Re ( z + \tauco ) + \Im ( z + \tauco) \tan \theta^{ ( 1 ) } > \varepsilon^{ ( 1 ) } - \tan \theta^{ ( 1 ) } > 0,
\end{align*}
and from this we find $ \arg ( z + \tauco ) \in \left( \theta^{ ( 1 ) } - \frac{ \pi }{ 2 } , \theta^{ ( 1 ) } + \frac{ \pi }{ 2 } \right) $. Therefore, by using Lemma \ref{Lem2.8} as $ \psi = - \theta^{ ( 1 ) } $ we obtain
\begin{align*}
L_{ \theta^{ ( 1 ) } }^{ ( 1 )} ( w , z , \chi_1 ) & = \frac{ 1 }{ \Gamma ( w ) } \sum_{ \Re ( \tauco ) > 0 } \int_0^{ \infty e^{ - i \theta^{ ( 1 ) } }} e^{ - ( z + \tauco ) t } t^{ w - 1 } dt \\
& = \sum_{ \Re ( \tauco ) > 0 } \frac{ 1 }{ ( z + \tauco )^w }.
\end{align*}

In a similar way as $ L_{ \theta^{ ( 1 ) } }^{ ( 1 ) } ( w , z , \chi_1 ) $ we can reach the desired result concerning $ L_{ \theta^{ ( 1 ) } }^{ ( 2 ) } ( w , z , \chi_1 ) $.
\end{proof}
\begin{lem} \label{Lem5.2}
If $ ( \frac{ 1 }{ 2 } + \tauoz ) \tan \theta^{ ( 1 ) } < \Re ( s ) \tan \theta^{ ( 1 ) } - \Im ( s ) < \tan \theta^{ ( 1 ) }, \, \Re ( s ) > 1 + \varepsilon^{ ( 1 ) } $ and $ \Re ( w ) > 1 $ then we have
\begin{align}
& L_{ \theta^{ ( 1 ) } }^{ ( 1 ) } \left( w , - i \left( s - \frac{ 1 }{ 2 } \right) , \chi_1 \right) = e^{ \frac{ \pi i w }{ 2 } } \sum_{ \Im ( \rhocobar ) < 0 } \frac{ 1 }{ ( s - \rhocobar )^w }, \label{Lem5.2-1} \\
& L_{ \theta^{ ( 1 ) } }^{ ( 2 ) } \left( w , - i \left( s - \frac{ 1 }{ 2 } \right) , \chi_1 \right) \notag \\
\begin{split}
& = e^{ \frac{ \pi i w }{ 2 } } \left( \sum_{ \Im ( \rhocobar ) > 0 } \frac{ 1 }{ ( s - \rhocobar )^w } + \sum_{ n = 1 }^\infty \frac{ 1 }{ \left( s + 2n - \frac{ 3 + \chi_1 ( -1 ) }{ 2 } \right)^w } \right. \\
& \qquad \qquad \left. + \frac{ \mutauoz }{ \left( s - \frac{ 1 }{ 2 } - \taucoz \right)^w } + \frac{ \mutauoz }{ \left( s - \frac{ 1 }{ 2 } + \taucoz \right)^w } + \frac{ \muoz }{ \left( s - \frac{ 1 }{ 2 } \right)^w } \right),
\end{split} \label{Lem5.2-2}
\end{align}
where the argument lies in $ \left( - \frac{ \pi }{ 2 } , \frac{ \pi }{ 2 } \right) $. The serieses in (\ref{Lem5.2-1}) and (\ref{Lem5.2-2}) converge absolutely, locally and uniformly in the given $ ( w , s ) $-region above.
\end{lem}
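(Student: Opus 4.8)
The plan is to read off the two displays $ ( \ref{Lem5.2-1} ) $ and $ ( \ref{Lem5.2-2} ) $ from Lemma \ref{Lem5.1} by putting $ z = - i \left( s - \frac{ 1 }{ 2 } \right) $ and rewriting each summand in terms of the zeros $ \rho $ rather than the parameters $ \tau $. Two identities drive the computation: the defining relation $ \rhoco = \frac{ 1 }{ 2 } + i \tauco $, i.e. $ \tauco = - i \left( \rhoco - \frac{ 1 }{ 2 } \right) $, and the zero symmetry $ \rhocobar = 1 - \rhoco $ coming from the functional equation $ \frac{ \xi^\prime }{ \xi } ( - s , \chi_1 ) = - \frac{ \xi^\prime }{ \xi } ( s , \bar{ \chi_1 } ) $, which makes $ \rho \mapsto 1 - \rho $ a multiplicity-preserving bijection from the imaginary zeros of $ L ( s , \chi_1 ) $ onto those of $ L ( s , \bar{ \chi_1 } ) $. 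Substituting into the expression for $ L_{ \theta^{ ( 1 ) } }^{ ( 1 ) } $ gives $ z + \tauco = - i \left( s - \rhocobar \right) $ with $ \rhocobar = 1 - \rhoco $; the summation condition $ \Re ( \tauco ) > 0 $, i.e. $ \Im ( \rhoco ) > 0 $, becomes $ \Im ( \rhocobar ) < 0 $, producing the sum in $ ( \ref{Lem5.2-1} ) $. For $ L_{ \theta^{ ( 1 ) } }^{ ( 2 ) } $ no functional equation is needed, since its definition $ ( \ref{L2Def} ) $ already involves $ l_{ \bar{ \chi_1 } } $: here $ \taucobar - z = i \left( s - \rhocobar \right) $ with $ \Im ( \rhocobar ) > 0 $, while for the real shifts $ c $ occurring in the $ \sum_n $ and $ \mu $-terms one has $ - z + c i = i \left( s + \left( c - \frac{ 1 }{ 2 } \right) \right) $, yielding the remaining summands of $ ( \ref{Lem5.2-2} ) $ after reading off the shifts $ 2 n - \frac{ 3 + \chi_1 ( - 1 ) }{ 2 } $, $ - \frac{ 1 }{ 2 } \mp \taucoz $ and $ - \frac{ 1 }{ 2 } $.

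Next I would extract the common scalar factor. Each base is now of the form $ ( \mp i ) \times ( \text{a shift of } s ) $, so the $ w $-th power splits as $ ( \mp i )^w $ times the power of the corresponding shift of $ s $. Since $ \mp i = e^{ \mp \pi i / 2 } $, every summand of $ L_{ \theta^{ ( 1 ) } }^{ ( 1 ) } $ picks up $ ( - i )^{ - w } = e^{ \pi i w / 2 } $, and every summand of $ L_{ \theta^{ ( 1 ) } }^{ ( 2 ) } $ picks up $ e^{ \pi i w } i^{ - w } = e^{ \pi i w / 2 } $, which is precisely the prefactor displayed in $ ( \ref{Lem5.2-1} ) $ and $ ( \ref{Lem5.2-2} ) $.

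The point that requires genuine argument, and the step I expect to be the main obstacle, is that this splitting is compatible with the branch conventions. In Lemma \ref{Lem5.1} the powers are defined through Lemma \ref{Lem2.8} with $ \psi = - \theta^{ ( 1 ) } $, so each base $ \nu $ carries its argument in $ \left( \theta^{ ( 1 ) } - \frac{ \pi }{ 2 } , \theta^{ ( 1 ) } + \frac{ \pi }{ 2 } \right) $, whereas the asserted normalization requires $ \arg ( s - \rho ) \in \left( - \frac{ \pi }{ 2 } , \frac{ \pi }{ 2 } \right) $. First, $ \Re ( s ) > 1 + \varepsilon^{ ( 1 ) } $ gives $ \Re ( s - \rho ) \ge \Re ( s ) - 1 > 0 $ and $ \Re \left( s - \frac{ 1 }{ 2 } \mp \taucoz \right) > 0 $, so every base indeed has positive real part and argument in $ \left( - \frac{ \pi }{ 2 } , \frac{ \pi }{ 2 } \right) $. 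The finer compatibility — that after multiplying by $ \mp i $ the argument lands inside the Lemma \ref{Lem2.8} window, so that the factorization carries no spurious $ 2 \pi $ — reduces to comparing $ \arg ( s - \rho ) $ with $ \theta^{ ( 1 ) } $. The two-sided hypothesis $ \left( \frac{ 1 }{ 2 } + \tauoz \right) \tan \theta^{ ( 1 ) } < \Re ( s ) \tan \theta^{ ( 1 ) } - \Im ( s ) < \tan \theta^{ ( 1 ) } $, together with $ \tan \theta^{ ( 1 ) } < \varepsilon^{ ( 1 ) } < \min \{ \taucom , \taucobarm \} $ which keeps the nonzero imaginary parts of the zeros bounded away from $ 0 $ by more than $ \tan \theta^{ ( 1 ) } $, forces $ \arg ( s - \rhocobar ) > \theta^{ ( 1 ) } $ in the $ L^{ ( 1 ) } $ sum (where $ \Im ( \rhocobar ) < - \tan \theta^{ ( 1 ) } $) and $ \arg ( s - \rhocobar ) < \theta^{ ( 1 ) } $ in the $ L^{ ( 2 ) } $ sum; the most restrictive real-shift term, $ s - \frac{ 1 }{ 2 } - \taucoz $, is exactly what the left inequality is calibrated to control. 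Checking these inequalities termwise legitimizes the splitting.

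Finally I would record convergence of the reindexed series. For $ \Re ( w ) > 1 $ the modulus of each zero-summand is comparable to $ | \Im ( \rho ) |^{ - \Re ( w ) } $, so by the Riemann--von Mangoldt counting of the zeros of $ L ( s , \chi_1 ) $ (whose number up to height $ T $ grows like $ T \log T $) the series converge absolutely, and the uniform lower bound for $ \Re ( s - \rho ) $ on compact subsets of the stated region gives local uniformity in $ ( w , s ) $; the sum over $ n $ and the finitely many $ \mu $-terms are harmless. Assembling the prefactors with the reindexed sums yields $ ( \ref{Lem5.2-1} ) $ and $ ( \ref{Lem5.2-2} ) $.
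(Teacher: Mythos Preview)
Your proposal is correct and follows essentially the same route as the paper: substitute $z = -i\bigl(s - \tfrac{1}{2}\bigr)$ into Lemma~\ref{Lem5.1}, rewrite $z + \tauco = -i(s - \rhocobar)$ via $\rhocobar = 1 - \rhoco$ (which the paper uses implicitly in one line), factor out $e^{\pi i w/2}$, then verify the argument of $s - \rhocobar$ lands in $(-\tfrac{\pi}{2}, \tfrac{\pi}{2})$ using $\Re(s - \rhocobar) > \varepsilon^{(1)} > 0$, and conclude convergence from the zero-counting estimate $\sharp\{\rhoco : \Im(\rhoco) \in (T, T+1]\} = O(\log T)$. Your branch-compatibility discussion is more elaborate than the paper's, but the logical content is the same.
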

\begin{proof}[\underline{\textbf{Proof of Lemma \ref{Lem5.2}}}]
Putting $ z = - i ( s - \frac{ 1 }{ 2 } ) $ in Lemma \ref{Lem5.1}, we obtain the conditions concerning $ ( w , s ) $ and have
\begin{align*}
L_{ \theta^{ ( 1 ) } }^{ ( 1 ) } \left( w , - i \left( s - \frac{ 1 }{ 2 } \right) , \chi_1 \right) & = \sum_{ \Re ( \tauco ) > 0 } \frac{ 1 }{ ( - i ( s - \frac{ 1 }{ 2 } ) + \tauco )^w } \\
& : \arg \left( - i \left( s - \frac{ 1 }{ 2 } \right) + \tauco \right) \in \left( \theta^{ ( 1 ) } - \frac{ \pi }{ 2 } , \theta^{ ( 1 ) } + \frac{ \pi }{ 2 } \right) \\
& = \sum_{ \Re ( \tauco ) > 0 } \frac{ e^{ \frac{ \pi i w }{ 2 } } }{ ( s - \frac{ 1 }{ 2 } + i \tauco )^w } \\
& : \arg \left( s - \frac{ 1 }{ 2 } + i \tauco \right) \in ( \theta^{ ( 1 ) } , \theta^{ ( 1 ) } + \pi ) \\
& = e^{ \frac{ \pi i w }{ 2 } } \sum_{ \Im ( \rhocobar ) < 0 } \frac{ 1 }{ ( s - \rhocobar )^w } \\
& : \arg ( s - \rhocobar ) \in ( \theta^{ ( 1 ) } , \theta^{ ( 1 ) } + \pi ).
\end{align*}
Now, since $ \Re ( s - \rhocobar ) > \varepsilon^{ ( 1 ) } > 0 $ is derived from $ \Re ( s ) > 1 + \varepsilon^{ ( 1 ) } $ and $ 0 < \Re ( \rhocobar ) < 1 $, we find
\begin{align*}
\arg ( s - \rhocobar ) \in \left( \theta^{ ( 1 ) } , \frac{ \pi }{ 2 } \right) \subset \left( - \frac{ \pi }{ 2 } , \frac{ \pi }{ 2 } \right).
\end{align*}

In the same way, we obtain $ ( \ref{Lem5.2-2} ) $.

The absolute and locally uniform convergences of the serieses in (\ref{Lem5.2-1}) and (\ref{Lem5.2-2}) in $ \Re ( s ) > 1 $ and $ \Re ( w ) > 1 $ are easily derived from
\begin{align*}
\sharp \{ \rhoco \, | \,  \Im( \rhoco ) \in ( T , T + 1 ] \} = O ( \log T ).
\end{align*}
The desired convergency follows immediately from $ \Re ( s ) > 1 $ and $ \Re ( w ) > 1 $ including the given $ ( w , s ) $-region. 
\end{proof}
\begin{lem} \label{Lem5.3}
If $ ( \frac{ 1 }{ 2 } + \tauoz ) \tan \theta^{ ( 1 ) } < \Re ( s ) \tan \theta^{ ( 1 ) } - \Im ( s ) < \tan \theta^{ ( 1 ) } $, $ \Re ( s ) > 1 + \varepsilon^{ ( 1 ) } $ and $ \ \Re ( w ) > 1 $ then we have
\begin{align}
R_{ \theta^{ ( 1 ) } } \left( w , - i \left( s - \frac{ 1 }{ 2 } \right) , \chi_1 \right) = - \frac{ e^{ \frac{ \pi i w }{ 2 } } }{ \Gamma ( w ) } \sum_{ p , m } \bar{ \chi_1 } ( p^m ) p^{ - m s } ( m \log p )^{ w - 1 } \log p. \label{Lem5.3-1}
\end{align}
The series converges absolutely and uniformly on any compact subset of $ \{ ( w , s ) \in \mathbb{ C }^2 \ | \ \Re ( s ) > 1 \} $.
\end{lem}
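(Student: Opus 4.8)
The plan is to evaluate the sum of residues defining $R_{\theta^{(1)}}\!\left(w,-i\left(s-\frac{1}{2}\right),\chi_1\right)$ directly, using the explicit expression for $l_{\chi_1}(t)$ from Theorem \ref{Thm3.2}. For $r=1$ this quantity equals $\frac{2\pi i}{\Gamma(w)}\lim_{N\to\infty}\sum_{p^m<N+1/2}\operatorname*{Res}_{t=im\log p}e^{-zt}l_{\chi_1}(t)t^{w-1}$ with $z=-i\left(s-\frac{1}{2}\right)$, and by Theorem \ref{Thm3.2} (iv) the only singularities of $l_{\chi_1}$ on the positive imaginary axis are simple poles at $t=im\log p$. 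First I would isolate the residue of $l_{\chi_1}$ itself there: among all terms of the expression (\ref{Thm3.2-1}), only the first term $-\frac{it}{2\pi}e^{-it/2}\sum_{p,m}\frac{\bar{\chi_1}(p^m)p^{-m}}{m(t-im\log p)}$ is singular at $t=im\log p$, so the residue is the regular factor evaluated at that point. Using $e^{-it/2}\big|_{t=im\log p}=p^{m/2}$ and $-\frac{it}{2\pi}\big|_{t=im\log p}=\frac{m\log p}{2\pi}$, this residue equals $\frac{\log p}{2\pi}\bar{\chi_1}(p^m)p^{-m/2}$.

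Next I would assemble the residue of the full integrand. Since $e^{-zt}t^{w-1}$ is holomorphic and nonvanishing near $t=im\log p$, the residue of $e^{-zt}l_{\chi_1}(t)t^{w-1}$ is $e^{-z\,im\log p}(im\log p)^{w-1}$ times the residue above. Putting $z=-i\left(s-\frac{1}{2}\right)$ gives $e^{-z\,im\log p}=p^{-m(s-1/2)}=p^{-ms}p^{m/2}$, which cancels the factor $p^{-m/2}$; and, with the branch $\arg t\in\left(-\frac{\pi}{2},\frac{3\pi}{2}\right)$ used in Lemma \ref{Lem5.2}, one has $\arg(im\log p)=\frac{\pi}{2}$, so $(im\log p)^{w-1}=(m\log p)^{w-1}e^{i\pi(w-1)/2}$. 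Collecting the prefactors $\frac{2\pi i}{\Gamma(w)}$, $\frac{1}{2\pi}$ and $e^{i\pi(w-1)/2}$ and simplifying the phase (via $i\,e^{-i\pi/2}=1$) produces the constant $e^{\pi i w/2}/\Gamma(w)$ multiplying $\sum_{p,m}\bar{\chi_1}(p^m)p^{-ms}(m\log p)^{w-1}\log p$, which is the right-hand side of (\ref{Lem5.3-1}).

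Finally I would justify that the truncated sum converges to this series, so the limit in the definition of $R_{\theta^{(1)}}$ is harmless. For $\Re(s)>1$ write $\Re(s)=1+\delta$ with $\delta>0$; then each general term is bounded in absolute value by a constant multiple of $p^{-m(1+\delta)}(m\log p)^{\Re(w)-1}\log p$, and Lemma \ref{Lem2.7} (iii) shows the resulting series converges, uniformly on compact subsets of $\{(w,s)\in\mathbb{C}^2\mid\Re(s)>1\}$. Hence the ordered partial sums over $p^m<N+1/2$ tend to the absolutely convergent double series, yielding (\ref{Lem5.3-1}). The only delicate point is the careful bookkeeping of the phases and the branch of $t^{w-1}$ needed to pin down the precise constant $e^{\pi i w/2}/\Gamma(w)$; the residue extraction and the convergence estimate are otherwise routine.
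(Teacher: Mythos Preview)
Your approach is identical to the paper's: both isolate the singular term of $l_{\chi_1}$ at $t=im\log p$ from Theorem~\ref{Thm3.2}~(ii), evaluate the residue directly, multiply by $\frac{2\pi i}{\Gamma(w)}$, and then cite Lemma~\ref{Lem2.7}~(iii) for absolute and locally uniform convergence on $\{\Re(s)>1\}$. The only slip is the sign you obtain: your collection of prefactors yields $+e^{\pi i w/2}/\Gamma(w)$, whereas the lemma asserts $-e^{\pi i w/2}/\Gamma(w)$. This discrepancy traces back to a typographical sign error in the displayed formula~(\ref{Thm3.2-1}): its first term should read $+\tfrac{it}{2\pi}e^{-it/2}\sum\cdots$ (this is the sign the paper itself uses in the proofs of Lemma~\ref{Lem3.3}~(iii) and of the present lemma, and it is what the derivation via Theorem~\ref{Thm3.1} actually gives). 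With that corrected sign, your residue of $l_{\chi_1}$ at $t=im\log p$ becomes $-\tfrac{\log p}{2\pi}\bar{\chi_1}(p^m)p^{-m/2}$ and the rest of your computation delivers exactly the stated right-hand side.
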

\begin{proof}[\underline{\textbf{Proof of Lemma \ref{Lem5.3}}}]
By Theorem \ref{Thm3.2} (ii) and (iv), we find that the residue in $ R_{ \theta^{ ( 1 ) } } \left( w , - i \left( s - \frac{ 1 }{ 2 } \right) , \chi_1 \right) $ is equal to
\begin{align*}
\underset{ t = i m \log p }{ \mathrm{ Res } } e^{ i ( s - \frac{ 1 }{ 2 } ) t } l_{ \chi_1 } ( t ) t^{ w - 1 } & = \underset{ t = i m \log p }{ \mathrm{ Res } } e^{ i ( s - \frac{ 1 }{ 2 } ) t } \left( \frac{ i t }{ 2 \pi } e^{ - \frac{ i t }{ 2 } } \frac{ \bar{ \chi_1 } ( p^m ) p^{ - m } }{ m ( t - i m \log p ) } \right) t^{ w - 1 } \\
& = - \frac{ i^{ w - 1 } }{ 2 \pi } \bar{ \chi_1 } ( p^m ) p^{ - m s } ( m \log p )^{ w - 1 } \log p.
\end{align*}
From this $ ( \ref{Lem5.3-1} ) $ follows.

From Lemma \ref{Lem2.7} (iii) it follows that the series in $ ( \ref{Lem5.3-1} ) $ converges absolutely and uniformly on any compact subset of $ \{ ( w , s ) \in \mathbb{ C }^2 \, | \, \Re ( s ) > 1 \} $.
\end{proof}
By using the above three lemmas we derive the desired equation.
\begin{thm} \label{Thm5.4}
If $ ( \frac{ 1 }{ 2 } + \tauoz ) \tan \theta^{ ( 1 ) } < \Re ( s ) \tan \theta^{ ( 1 ) } - \Im ( s ) < \tan \theta^{ ( 1 ) }, \, \Re ( s ) > 1 + \varepsilon^{ ( 1 ) } $ and $ \Re ( w ) > 1 $, we have
\begin{align}
\left. \begin{array}{@{}l}
\dsp{ \sum_{ \Im ( \rhoco ) \not = 0 } \frac{ 1 }{ ( s - \rhoco )^w } + \sum_{ n = 1 }^\infty \frac{ 1 }{ \left( s + 2n - \frac{ 3 + \chi_1 ( -1 ) }{ 2 } \right)^w } } \\[+15pt]
+ \dsp{ \frac{ \mutauoz }{ \left( s - \frac{ 1 }{ 2 } - \taucoz \right)^w } + \frac{ \mutauoz }{ \left( s - \frac{ 1 }{ 2 } + \taucoz \right)^w } + \frac{ \muoz }{ \left( s - \frac{ 1 }{ 2 } \right)^w } } \\[+20pt]
= \dsp{ - \frac{ 1 }{ \Gamma ( w ) } \sum_{ p , m } \chi_1 ( p^m ) p^{ - m s } ( m \log p )^{ w - 1 } \log p }.
\end{array} \right\} \label{Thm5.4-1}
\end{align}
\end{thm}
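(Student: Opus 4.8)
The plan is to read off (\ref{Thm5.4-1}) directly from the ``key equation'' of Theorem \ref{Thm4.1} specialized to $ r = 1 $, after substituting $ z = - i \left( s - \frac{ 1 }{ 2 } \right) $ and inserting the explicit evaluations supplied by Lemmas \ref{Lem5.2} and \ref{Lem5.3}. First I would check that, under the hypotheses of Theorem \ref{Thm5.4}, the point $ \left( w , - i \left( s - \frac{ 1 }{ 2 } \right) \right) $ lies in the region where Theorem \ref{Thm4.1} applies; this is precisely the translation already carried out in passing from Lemma \ref{Lem5.1} to Lemma \ref{Lem5.2}, since $ \Re ( z ) = \Im ( s ) $ and $ \Im ( z ) = \frac{ 1 }{ 2 } - \Re ( s ) $ convert the defining inequalities of $ D_{ \theta^{ ( 1 ) } , \tauoz } $ together with $ \Im ( z ) < - ( \frac{ 1 }{ 2 } + \varepsilon^{ ( 1 ) } ) $ into the stated $ ( w , s ) $-conditions. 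With the hypotheses verified, Theorem \ref{Thm4.1} gives $ L_{ \theta^{ ( 1 ) } }^{ ( 1 ) } + L_{ \theta^{ ( 1 ) } }^{ ( 2 ) } = R_{ \theta^{ ( 1 ) } } $ at this argument.

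Next I would substitute the closed forms. Lemma \ref{Lem5.2} expresses $ L_{ \theta^{ ( 1 ) } }^{ ( 1 ) } $ as $ e^{ \frac{ \pi i w }{ 2 } } $ times the sum over zeros $ \rhocobar $ with $ \Im ( \rhocobar ) < 0 $, and $ L_{ \theta^{ ( 1 ) } }^{ ( 2 ) } $ as $ e^{ \frac{ \pi i w }{ 2 } } $ times the sum over $ \rhocobar $ with $ \Im ( \rhocobar ) > 0 $ plus the trivial-zero series and the three real-zero terms; Lemma \ref{Lem5.3} expresses $ R_{ \theta^{ ( 1 ) } } $ as $ - \frac{ e^{ \pi i w / 2 } }{ \Gamma ( w ) } \sum_{ p , m } \bar{ \chi_1 } ( p^m ) p^{ - m s } ( m \log p )^{ w - 1 } \log p $. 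The common nonzero factor $ e^{ \pi i w / 2 } $ cancels throughout. The two partial zero-sums, one over $ \Im ( \rhocobar ) < 0 $ and one over $ \Im ( \rhocobar ) > 0 $, merge into the single absolutely convergent sum $ \sum_{ \Im ( \rhocobar ) \neq 0 } ( s - \rhocobar )^{ - w } $; the rearrangement is legitimate because both pieces converge absolutely in the given region, by the density estimate $ \sharp \{ \rho \mid \Im ( \rho ) \in ( T , T + 1 ] \} = O ( \log T ) $ quoted in the proof of Lemma \ref{Lem5.2}. This produces exactly (\ref{Thm5.4-1}) but with $ \chi_1 $ replaced by $ \bar{ \chi_1 } $ on both sides.

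Finally I would remove this discrepancy by the observation that $ \chi_1 $ is an arbitrary primitive character, so the identity just obtained holds verbatim with $ \bar{ \chi_1 } $ in the role of $ \chi_1 $. Applying it in that form and using $ \overline{ \bar{ \chi_1 } } = \chi_1 $, so that the zeros now counted are the imaginary zeros $ \rhoco $ of $ L ( s , \chi_1 ) $ and $ \overline{ \bar{ \chi_1 } } ( p^m ) = \chi_1 ( p^m ) $, yields (\ref{Thm5.4-1}) in the stated form. Here one must note that the data attached to the real zeros and to the trivial-zero shift are conjugation-invariant: $ \chi_1 ( - 1 ) = \bar{ \chi_1 } ( - 1 ) \in \{ \pm 1 \} $, and the possible real zeros $ \frac{ 1 }{ 2 } \pm \taucoz $ together with their orders $ \mutauoz $ and $ \muoz $ are unchanged under $ \chi_1 \mapsto \bar{ \chi_1 } $ because $ L ( \frac{ 1 }{ 2 } \pm \taucoz , \bar{ \chi_1 } ) = \overline{ L ( \frac{ 1 }{ 2 } \pm \taucoz , \chi_1 ) } $ for real $ \taucoz $. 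I expect the only real subtlety---the ``main obstacle''---to be precisely this bookkeeping: tracking which sums carry the $ \bar{ \chi_1 } $ label, justifying the merger of the two half-plane zero-sums, and confirming the conjugation-invariance of the real-zero contributions so that the relabeling is clean; the remainder is the formal cancellation of $ e^{ \pi i w / 2 } $.
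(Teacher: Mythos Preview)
Your proposal is correct and follows essentially the same route as the paper's own proof: specialize Theorem \ref{Thm4.1} to $r=1$ with $z=-i(s-\tfrac12)$, insert the evaluations of Lemmas \ref{Lem5.2} and \ref{Lem5.3}, cancel $e^{\pi i w/2}$, and then pass from $\bar{\chi_1}$ to $\chi_1$. The paper simply states the last step as ``replacing $\bar{\chi_1}$ with $\chi_1$''; your explicit verification that the real-zero data and $\chi_1(-1)$ are conjugation-invariant is a welcome elaboration of that step rather than a departure from it.
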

\begin{proof}[\underline{\textbf{Proof of Theorem \ref{Thm5.4}}}]
We put $ r = 1 $ and $ z = - i \left( s - \frac{ 1 }{ 2 } \right) $ in Theorem \ref{Thm4.1} and then by applying Lemma \ref{Lem5.2} and \ref{Lem5.3} we have
\begin{align}
\left. \begin{array}{@{}l}
\dsp{ \sum_{ \Im ( \rhocobar ) \not = 0 } \frac{ 1 }{ ( s - \rhocobar )^w } + \sum_{ n = 1 }^\infty \frac{ 1 }{ \left( s + 2n - \frac{ 3 + \chi_1 ( -1 ) }{ 2 } \right)^w } } \\[+15pt]
+ \dsp{ \frac{ \mutauoz }{ \left( s - \frac{ 1 }{ 2 } - \taucoz \right)^w } + \frac{ \mutauoz }{ \left( s - \frac{ 1 }{ 2 } + \taucoz \right)^w } + \frac{ \muoz }{ \left( s - \frac{ 1 }{ 2 } \right)^w } } \\[+20pt]
= \dsp{ - \frac{ 1 }{ \Gamma ( w ) } \sum_{ p , m } \bar{ \chi_1 } ( p^m ) p^{ - m s } ( m \log p )^{ w - 1 } \log p },
\end{array} \right\} \label{PofThm5.4-1}
\end{align}
under the conditions that
\begin{align*}
& \left( \tauoz + \frac{ 1 }{ 2 } \right) \tan \theta^{ ( 1 ) } < \Re ( s ) \tan \theta^{ ( 1 ) } - \Im ( s ) < \tan \theta^{ ( 1 ) }, \\
 & \Re ( s ) > 1 + \varepsilon^{ ( 1 ) } \mbox{ and } \Re ( w ) > 1.
\end{align*}
Then, replacing $ \bar{ \chi_1 } $ with $ \chi_1 $ in $ ( \ref{PofThm5.4-1} ) $, we obtain $ ( \ref{Thm5.4-1} ) $.
\end{proof}
\subsection{Proof of Theorem 1.1}
\begin{proof}
The left-hand side of $ ( \ref{Thm5.4-1} ) $ is a meromorphic function of $ w $ on the whole $ \mathbb{ C } $ by Corollary \ref{Cor4.3}. Hence, by using the definition of the zeta regularized product we have
\begin{align}
& \exp \left( - \underset{ w = 0 }{ \mathrm{ Res } } \left( \frac{ \mbox{the left-hand side of } ( \ref{Thm5.4-1} ) }{ w^2 } \right) \right) \notag \\
\begin{split}
& = \Rprod_{ \Im ( \rhoco ) \not = 0 } ( s - \rhoco ) \Rprod_{ n = 1 }^\infty \left( s + 2 n - \frac{ 3 + \chi_1 ( - 1 ) }{ 2 } \right) \\
& \quad \times \left( s - \frac{ 1 }{ 2 } - \taucoz \right)^{ \mutauoz } \left( s - \frac{ 1 }{ 2 } + \taucoz \right)^{ \mutauoz } \left( s - \frac{ 1 }{ 2 } \right)^{ \muoz }.
\end{split} \label{PofThm1.1-1}
\end{align}

On the other hand, since $ \frac{ 1 }{ \Gamma ( w ) } = w + O ( w^2 ) \ ( w \to 0 ) $, we have
\begin{align*}
\exp \left( - \underset{ w = 0 }{ \mathrm{ Res } } \left( \frac{ \mbox{the right-hand side of } ( \ref{Thm5.4-1} ) }{ w^2 } \right) \right) & = \exp \left( \sum_{ p , m } \frac{ \chi_1 ( p^m ) p^{ - m s } }{ m } \right) \\
& = \prod_{ p } ( 1 - \chi_1 ( p ) p^{ - s } )^{ - 1 }.
\end{align*}
By the property of the zeta regularized products, $ ( \ref{PofThm1.1-1} ) $ is a meromorphic function on the whole $ \mathbb{ C } $. Hence $ ( \ref{Thm1.1-1} ) $ holds.
\end{proof}
\section{\textbf{The Euler product expression of $ \bm{ ( L_{ \chi_1 } \otimes_{ \mathbb{ F }_1 } L_{ \chi_2 } ) ( s ) } $}} \label{Sec6}
In a similar way as section 5, we show Theorem \ref{Thm1.3}.
\subsection{The ``key equation'' for $ r = 2 $}
\begin{lem} \label{Lem6.1}
If $ ( 2 \tautz + 1 ) \tan \theta^{ ( 2 ) } < \Re ( s ) \tan \theta^{ ( 2 ) } - \Im ( s ) < 2 \tan \theta^{ ( 2 ) } $, $ \Re ( s ) > 2 ( 1 + \varepsilon^{ ( 2 ) } ) $ and $ \ \Re ( w ) > 2 $ then we have
\begin{align*}
& L_{ \theta^{ ( 2 ) } }^{ ( 1 ) } ( w , -i ( s - 1 ) , \{ \chi_j \}_{ j = 1 }^2 ) = e^{ \frac{ \pi i w }{ 2 } } \sum_{ \Im ( \rhocobar ) , \Im ( \rhoctbar ) < 0 } \frac{ 1 }{ ( s - \rhocobar - \rhoctbar )^w }, \\
& L_{ \theta^{ ( 2 ) } }^{ ( 2 ) } ( w , - i ( s - 1 ) , \{ \chi_j \}_{ j = 1 }^2 ) \\ 
& = - e^{ \frac{ \pi i w }{ 2 } } \left( \sum_{ \Im ( \rhocobar ) , \Im ( \rhoctbar ) > 0 } \frac{ 1 }{ ( s - \rhocobar - \rhoctbar )^w } \right. \\
& \quad + \sum_{ ( a , b ) \in \{ ( 1 , 2 ) , ( 2 , 1 ) \} } \left( \sum_{ \Im ( \rhocabar ) > 0 } \sum_{ n = 1 }^\infty \frac{ 1 }{ \left( s - \rhocabar + 2n - \frac{ 3 + \chi_b ( -1 ) }{ 2 } \right)^w } \right. \\
& \quad + \sum_{ \Im ( \rhocbbar ) > 0 } \frac{ \mutauaz }{ \left( s - \rhocbbar - \frac{ 1 }{ 2 } - \taucaz \right)^w } + \sum_{ \Im ( \rhocbbar ) > 0 } \frac{ \mutauaz }{ \left( s - \rhocbbar - \frac{ 1 }{ 2 } + \taucaz \right)^w } \\
& \quad + \sum_{ \Im ( \rhocbbar ) > 0 } \frac{ \muaz }{  \left( s - \rhocbbar - \frac{ 1 }{ 2 } \right)^w } + \sum_{ n = 1 }^\infty \frac{ \mutauaz }{ \left( s + 2n - 2 - \frac{ \chi_b ( -1 ) }{ 2 } - \taucaz \right)^w } \\
& \quad + \left. \sum_{ n = 1 }^\infty \frac{ \mutauaz }{ \left( s + 2n - 2 - \frac{ \chi_b ( -1 ) }{ 2 } + \taucaz \right)^w } + \sum_{ n = 1 }^\infty \frac{ \muaz }{ \left( s + 2n - 2 - \frac{ \chi_b ( -1 ) }{ 2 } \right)^w } \right) \\
& \quad + \sum_{ n_1 = 1 }^\infty \sum_{ n_2 = 1 }^\infty \frac{ 1 }{ \left( s + 2 n_1 + 2 n_2 - 3 - \frac{ \chi_1 ( -1 ) + \chi_2 ( -1 ) }{ 2 } \right)^w } \\
& \quad + \frac{ \mutauoz \mutautz }{ \left( s - 1 - \taucoz - \tauctz \right)^w } + \frac{ \mutauoz \mutautz }{ \left( s - 1 - \taucoz + \tauctz \right)^w } \\
& \quad + \frac{ \mutauoz \mutautz }{ \left( s - 1 + \taucoz - \tauctz \right)^w } + \frac{ \mutauoz \mutautz }{ \left( s - 1 + \taucoz + \tauctz \right)^w } \\
& \quad \left. + \sum_{ ( a , b ) \in \{ ( 1 , 2 ) , ( 2 , 1 ) \} } \left( \frac{ \mutauaz \mubz }{ \left( s - 1 - \taucaz \right)^w } + \frac{ \mutauaz \mubz }{ \left( s - 1 + \taucaz \right)^w } \right) + \frac{ \muoz \mutz }{ ( s - 1 )^w } \right).
\end{align*}
The serieses which appear here converge absolutely, locally and uniformly in the given $ ( w , s ) $-region above. 
\end{lem}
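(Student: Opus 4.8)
The plan is to follow the $ r = 1 $ computation of Lemma \ref{Lem5.1} and Lemma \ref{Lem5.2} almost verbatim, the one new ingredient being that a product of two factors must now be multiplied out. First I would handle $ L_{ \theta^{ ( 2 ) } }^{ ( 1 ) } $. Writing out the defining series, $ l_{ \chi_1 } ( t ) l_{ \chi_2 } ( t ) = \sum_{ \Re ( \tauco ) > 0 } \sum_{ \Re ( \tauct ) > 0 } e^{ - ( \tauco + \tauct ) t } $, so each summand of $ e^{ - z t } l_{ \chi_1 } ( t ) l_{ \chi_2 } ( t ) $ is $ e^{ - ( z + \tauco + \tauct ) t } $. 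Applying Lemma \ref{Lem2.8} with $ \psi = - \theta^{ ( 2 ) } $ and $ \nu = z + \tauco + \tauct $ term by term gives $ L_{ \theta^{ ( 2 ) } }^{ ( 1 ) } ( w , z , \{ \chi_j \}_{ j = 1 }^2 ) = \sum_{ \Re ( \tauco ) > 0 } \sum_{ \Re ( \tauct ) > 0 } ( z + \tauco + \tauct )^{ - w } $; the hypothesis that $ \nu $ lies in the sector required by Lemma \ref{Lem2.8} is checked exactly as in Lemma \ref{Lem5.2}, using $ ( w , z ) \in D_{ \theta^{ ( 2 ) } , \tautz } $ together with $ \Re ( \tauco ) , \Re ( \tauct ) > \varepsilon^{ ( 2 ) } > \tan \theta^{ ( 2 ) } $. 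Substituting $ z = - i ( s - 1 ) $ and using $ i \tauco = \rhoco - \frac{ 1 }{ 2 } $, $ i \tauct = \rhoct - \frac{ 1 }{ 2 } $ together with the functional-equation symmetry $ \rhocobar = 1 - \rhoco $, $ \rhoctbar = 1 - \rhoct $, I would rewrite $ z + \tauco + \tauct = - i ( s - \rhocobar - \rhoctbar ) $; this produces the phase $ e^{ \pi i w / 2 } $ and the condition $ \Re ( \tauco ) , \Re ( \tauct ) > 0 $ becomes $ \Im ( \rhocobar ) , \Im ( \rhoctbar ) < 0 $, yielding the first displayed formula. That $ s - \rhocobar - \rhoctbar $ lies in the principal branch follows from $ \Re ( s ) > 2 ( 1 + \varepsilon^{ ( 2 ) } ) $, which forces $ \Re ( s - \rhocobar - \rhoctbar ) > 0 $.

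The treatment of $ L_{ \theta^{ ( 2 ) } }^{ ( 2 ) } $ is the same, except that the integrand now carries the product of the two bracketed factors of $ ( \ref{L2Def} ) $. Each bracket is a sum of five families of exponentials: the zero-sum $ l_{ \bar{ \chi_j } } ( t ) $ with frequencies $ \taucjbar $ ($ \Re > 0 $); the trivial-zero sum $ \sum_{ n \ge 1 } e^{ - ( 2 n - 1 - \chi_j ( - 1 ) / 2 ) i t } $; the two real-zero exponentials $ \mutaujz e^{ \pm i \taucjz t } $; and the constant $ \mujz $. Multiplying the two brackets produces $ 5 \times 5 = 25 $ families of terms, each of the form $ C \, e^{ - ( a_1 + a_2 ) t } $, and since $ e^{ z t } e^{ - ( a_1 + a_2 ) t } = e^{ - ( a_1 + a_2 - z ) t } $ I would apply Lemma \ref{Lem2.8} to each with $ \nu = a_1 + a_2 - z $, carrying the global prefactor $ ( - 1 )^{ r - 1 } e^{ \pi i w } = - e^{ \pi i w } $. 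Substituting $ z = - i ( s - 1 ) $ and factoring out $ i $ sends $ - e^{ \pi i w } ( a_1 + a_2 - z )^{ - w } $ to $ - e^{ \pi i w / 2 } ( \cdot )^{ - w } $, and the $ 25 $ families match the listed terms as follows, using $ i \taucjbar = \rhocjbar - \frac{ 1 }{ 2 } $ for whichever factors are genuine zeros: zero$ \, \times \, $zero gives the leading double sum over $ \Im ( \rhocobar ) , \Im ( \rhoctbar ) > 0 $; zero$ \, \times \, $trivial and trivial$ \, \times \, $zero combine into the $ ( a , b ) $-symmetrised single sums with shift $ 2 n - \frac{ 3 + \chi_b ( - 1 ) }{ 2 } $; zero$ \, \times \, $(real-zero or constant) give the sums over $ \Im ( \rhocbbar ) > 0 $ weighted by $ \mutauaz $ or $ \muaz $; trivial$ \, \times \, $(real-zero or constant) give the single sums over $ n $; trivial$ \, \times \, $trivial gives the double sum over $ n_1 , n_2 $; and the purely discrete products (real-zero or constant)$ \, \times \, $(real-zero or constant) give the finitely many explicit terms $ ( s - 1 \mp \taucoz \mp \tauctz )^{ - w } $, $ ( s - 1 \mp \taucaz )^{ - w } $ and $ ( s - 1 )^{ - w } $ with multiplicity products $ \mutauoz \mutautz $, $ \mutauaz \mubz $ and $ \muoz \mutz $.

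Finally I would record the convergence statements. For the sums over zeros this is the $ r = 2 $ analogue of the estimate used in Lemma \ref{Lem5.2}, following from $ \sharp \{ \rhocj \mid \Im ( \rhocj ) \in ( T , T + 1 ] \} = O ( \log T ) $ combined with $ \Re ( w ) > 2 $ and $ \Re ( s ) > 2 ( 1 + \varepsilon^{ ( 2 ) } ) $; for the sums over $ n $, $ n_1 $ and $ n_2 $ absolute convergence is immediate since the bases grow linearly. The step I expect to be most delicate is the branch bookkeeping: for each of the $ 25 $ families one must verify that $ \arg ( a_1 + a_2 - z ) $ stays inside the sector $ ( \theta^{ ( 2 ) } - \frac{ \pi }{ 2 } , \theta^{ ( 2 ) } + \frac{ \pi }{ 2 } ) $ required by Lemma \ref{Lem2.8}, and that after factoring out $ i $ the resulting argument lands in $ ( - \frac{ \pi }{ 2 } , \frac{ \pi }{ 2 } ) $ so that each phase is exactly $ e^{ \pi i w / 2 } $; this is precisely where the shape of $ D_{ \theta^{ ( 2 ) } , \tautz } $ and the lower bound on $ \Re ( s ) $ enter, and it must be carried out uniformly, with most care for the mixed families pairing a genuine zero against a purely imaginary frequency. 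Beyond this systematic enumeration and matching, no new idea past Lemma \ref{Lem5.2} is needed.
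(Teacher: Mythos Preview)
Your proposal is correct and follows exactly the approach the paper indicates: the paper's own proof of Lemma \ref{Lem6.1} is the single sentence ``In a similar way as Lemma \ref{Lem5.1} and \ref{Lem5.2} we can prove these,'' and your write-up is precisely the detailed execution of that program --- expanding the product, applying Lemma \ref{Lem2.8} term by term, substituting $z = -i(s-1)$, and invoking the zero-counting bound for convergence. Your enumeration of the $25$ cross terms and your identification of the branch bookkeeping as the only point requiring care are accurate; nothing beyond the $r=1$ argument is needed.
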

\begin{proof}[\underline{\textbf{Proof of Lemma \ref{Lem6.1}}}]
In a similar way as Lemma \ref{Lem5.1} and \ref{Lem5.2} we can prove these.
\end{proof}
\begin{lem} \label{Lem6.2}
If $ ( 2 \tautz + 1 ) \tan \theta^{ ( 2 ) } < \Re ( s ) \tan \theta^{ ( 2 ) } - \Im ( s ) < 2 \tan \theta^{ ( 2 ) } $, $ \Re ( s ) > 2 ( 1 + \varepsilon^{ ( 2 ) } ) $ and $ \Re ( w ) > 2 $ then we have
\begin{align}
R_{ \theta^{ ( 2 ) } } ( w , - i ( s - 1 ) , \{ \chi_j \}_{ j = 1 }^2 ) = \frac{ e^{ \frac{ \pi i w }{ 2 } } }{ \Gamma ( w ) } \sum_{ k = 1 }^{ 10 } E_k ( w , s , \{ \bar{ \chi_j } \}_{ j = 1 }^2 ). \label{Lem6.2-1}
\end{align}
\end{lem}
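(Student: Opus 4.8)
The plan is to evaluate $ R_{ \theta^{ ( 2 ) } } ( w , - i ( s - 1 ) , \{ \chi_j \}_{ j = 1 }^2 ) $ directly from its definition as a sum of residues of $ e^{ i ( s - 1 ) t } l_{ \chi_1 } ( t ) l_{ \chi_2 } ( t ) t^{ w - 1 } $ at the points $ t_0 \coloneqq i m \log p $, and then to match the outcome, summand by summand, with the ten series $ E_k $. First I would determine the local behaviour at $ t_0 $. By Theorem \ref{Thm3.2} (iv) each $ l_{ \chi_j } ( t ) $ has a simple pole there, and since $ ( p , m ) \mapsto p^m $ is injective the two poles coincide, so $ l_{ \chi_1 } l_{ \chi_2 } $ has a double pole while $ \phi ( t ) \coloneqq e^{ i ( s - 1 ) t } t^{ w - 1 } $ stays holomorphic. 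Writing $ l_{ \chi_j } ( t ) = A_j ( t - t_0 )^{ - 1 } + B_j + O ( t - t_0 ) $ gives
\begin{align*}
\underset{ t = t_0 }{ \mathrm{ Res } } \, \phi ( t ) l_{ \chi_1 } ( t ) l_{ \chi_2 } ( t ) = A_1 A_2 \phi^\prime ( t_0 ) + ( A_1 B_2 + A_2 B_1 ) \phi ( t_0 ) .
\end{align*}
The only term of the explicit formula (\ref{Thm3.2-1}) singular at $ t_0 $ is the single $ ( p , m ) $-summand of the first sum, from which I read off $ A_j = \frac{ \log p }{ 2 \pi } \bar{ \chi_j } ( p^m ) p^{ - m / 2 } $; then $ B_j $ is the value at $ t_0 $ of all of (\ref{Thm3.2-1}) with that one summand deleted, its own regular part contributing $ h_j^\prime ( t_0 ) $ where $ h_j ( t ) \coloneqq - \frac{ i t }{ 2 \pi } e^{ - i t / 2 } \bar{ \chi_j } ( p^m ) p^{ - m } / m $.

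Next I would split the residue into a diagonal part, in which both factors come from their respective first sums, and an off-diagonal remainder. Setting $ g ( t ) \coloneqq - \frac{ i t }{ 2 \pi } e^{ - i t / 2 } p^{ - m } / m $, the diagonal part is $ A_1 A_2 \phi^\prime ( t_0 ) + ( A_1 h_2^\prime ( t_0 ) + A_2 h_1^\prime ( t_0 ) ) \phi ( t_0 ) = \bar{ \chi_1 } ( p^m ) \bar{ \chi_2 } ( p^m ) \frac{ d }{ d t } [ g ( t )^2 \phi ( t ) ] |_{ t = t_0 } $, and a short computation using $ i^w = e^{ \pi i w / 2 } $ shows that applying $ \frac{ 2 \pi i }{ \Gamma ( w ) } \sum_{ p , m } $ to it yields exactly $ \frac{ e^{ \pi i w / 2 } }{ \Gamma ( w ) } E_1 ( w , s , \{ \bar{ \chi_j } \}_{ j = 1 }^2 ) $.

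For the off-diagonal part $ ( A_1 \tilde B_2 + A_2 \tilde B_1 ) \phi ( t_0 ) $, where $ \tilde B_b $ collects the remaining terms of (\ref{Thm3.2-1}) for $ l_{ \chi_b } $ at $ t_0 $, the two orderings $ ( a , b ) \in \{ ( 1 , 2 ) , ( 2 , 1 ) \} $ reproduce the symmetrised sums of $ E_2 , \dots , E_{ 10 } $, and I would match them one at a time: the leftover part of the first sum (carrying the condition $ q^n \neq p^m $) gives $ E_2 $; the second sum gives $ E_3 $; the $ t - m \pi $ sum gives $ E_4 $; the term $ - \frac{ i }{ 2 } e^{ - \chi_b ( - 1 ) i t / 2 } / \sin t $ gives $ E_5 $; the contour integral of $ \log L ( u , \bar{ \chi_b } ) $ gives $ E_6 $ after the contour $ S_b $ is deformed to $ S^{ ( 2 ) } $; the constant, $ \gamma $, Gauss-sum and $ H ( t ) $ terms give $ E_7 $; and the three $ \mu $-terms give $ E_8 , E_9 , E_{ 10 } $. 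In every case the powers of $ p $ collapse as $ p^{ - m / 2 } p^{ - ( s - 1 ) m } = p^{ - m ( s - 1 / 2 ) } $ and the factor $ i \cdot i^{ w - 1 } = e^{ \pi i w / 2 } $ supplies the prefactor, the surplus $ 2 \pi $ cancelling the $ ( 2 \pi )^{ - 1 } $ in $ A_a $.

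The hard part is not this identification but the justification of the rearrangement. The residues are summed in the prescribed order $ p^m < N + \frac{ 1 }{ 2 } $ and must be redistributed among the ten series, each of which must be shown to converge in the stated region. For $ E_1 $ and $ E_3 , \dots , E_{ 10 } $ this follows from Lemma \ref{Lem2.7}, but $ E_2 $ carries the denominator $ m \log p - n \log q $ with $ p^m \neq q^n $, which can be arbitrarily small, so both its convergence and its legitimate extraction from the ordered sum rest on the spacing estimate of Lemma \ref{Lem2.7} (i) together with the elementary bound $ | p^m - q^n | \geq 1 $, much as in the proof of Corollary \ref{Cor3.4}. A secondary technical point is the contour deformation from $ S_b $ (built from $ \varepsilon_b $) to the common contour $ S^{ ( 2 ) } $ (built from $ \varepsilon^{ ( 2 ) } $) inside $ E_6 $, legitimate by the holomorphy of $ \log L ( u , \bar{ \chi_b } ) $ in the region swept out.
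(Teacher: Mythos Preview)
Your proposal is correct and follows essentially the same route as the paper. The paper's proof expands $l_{\chi_1}(t)\,l_{\chi_2}(t)$ via Theorem \ref{Thm3.2} (ii), isolating the square of the singular $(p,m)$-summand together with the cross terms against the remaining constituents of $l_{\chi_b}$, and then simply says ``applying this \dots\ leads to (\ref{Lem6.2-1})''; you carry out exactly this computation but make the Laurent-expansion bookkeeping ($A_j$, $B_j$, the diagonal $\frac{d}{dt}[g^2\phi]$ identity) and the term-by-term matching with $E_1,\dots,E_{10}$ explicit. Your remarks on the $N\to\infty$ rearrangement and on the convergence of $E_2$ are points the paper defers to the separate Lemma \ref{Lem6.3}, and your observation about the passage from the contour $S_b$ to $S^{(2)}$ in $E_6$ is a detail the paper leaves implicit.
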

\begin{proof}[\underline{\textbf{Proof of Lemma \ref{Lem6.2}}}]
Let $ p $ and $ m $ be any fixed prime number and positive integer respectively. By Theorem \ref{Thm3.2} (ii) we have
\begin{align*}
& l_{ \chi_1 } ( t ) \cdot l_{ \chi_2 } ( t ) \\
& = \bar{ \chi_1 } ( p^m ) \bar{ \chi_2 } ( p^m ) \left( \frac{ i t e^{ - \frac{ i t }{ 2 } } p^{ - m } }{ 2 \pi m ( t - i m \log p ) } \right)^2 \\
& \quad + \sum_{ ( a , b ) \in \{ ( 1 , 2 ) , ( 2 , 1 ) \} } \frac{ i t e^{ - \frac{ i t }{ 2 } } \bar{ \chi_a } ( p^m ) p^{ - m } }{ 2 \pi m ( t - i m \log p ) } \Bigg( - \frac{ i t }{ 2 \pi } e^{ - \frac{ i t }{ 2 } } \sum_{ \scriptsize{ \begin{array}{c}
q , n \\
p^m \not = q^n
\end{array} } } \frac{ \bar{ \chi_b } ( q^n ) q^{ - n } }{ n ( t - i n \log q ) } \\
& \quad - \frac{ e^{ i \left( \alpha + \frac{ 1 }{ 2 } \right) t } }{ 2 \pi } \left( it \sum_{ q , n } \frac{ \chi_b ( q^n ) q^{ - n ( 1 + \alpha ) } }{ n ( t + i n \log q ) } - i t \sum_{ n = 1 }^\infty \frac{ \chi_b ( - 1 ) e^{ - i \alpha m \pi } }{ n ( t - n \pi ) } \right. \\
& \quad + i \log \left( \frac{ \chi_b ( - 1 ) \Gamma ( 1 + \alpha ) N_b^\alpha G ( \bar{ \chi_b } ) }{ ( 2 \pi )^{ 1 + \alpha } } \right) - \frac{ ( 1 + \alpha ) \pi }{ 2 } \\
& \quad \left. + \frac{ 1 }{ t } \left( \gamma + \log \left( \frac{ 2 \pi }{ N_b } \right) + \frac{ \pi i }{ 2 } \right) - \frac{ 1 }{ t } \int_0^\infty \frac{ 1 }{ e^u - 1 } \cdot \frac{ u - i t ( 1 - e^{ - \alpha u } ) }{ u - i t } du \right) \\
& \quad - \frac{ t }{ 2 \pi } e^{ \frac{ i t }{ 2 } } \int_{ S^{ ( 2 ) } ( \pi \to 0 ) } e^{ - i s t } \log L ( s , \bar{ \chi_b } ) ds - \frac{ i e^{ - \frac{ \chi_b ( - 1 ) }{ 2 } it } }{ 2 \sin t } \\
& \quad - \mutaubz ( e^{ i \mutaubz t } + e^{ - i \mutaubz t } ) - \mubz \Bigg) \\
& \quad + ( \mbox{the holomorphic parts at } t = i m \log p ).
\end{align*}
Applying this to
\begin{align*}
R_{ \theta^{ ( 2 ) } } ( w , - i ( s - 1 ) , \{ \chi_j \}_{ j = 1 }^2 ) = \frac{ 2 \pi i }{ \Gamma ( w ) } \sum_{ p , m } \underset{ t = i m \log p }{ \mathrm{ Res } } ( e^{ i ( s - 1 ) t } l_{ \chi _1 } ( t ) \cdot l_{ \chi_2 } ( t ) t^{ w - 1 } )
\end{align*}
leads to $ ( \ref{Lem6.2-1} ) $.
\end{proof}
In the following lemma we show the convergencies of $ E_k ( w , s , \{ \chi_j \}_{ j = 1 }^2 ) $ which can be proved in almost the same way as Akatsuka's method used in \cite[Theorem 1.2]{Aka2}
\begin{lem} \label{Lem6.3}
For $ k \in \{ 1 , 2 , \cdots , 10 \} $, $ E_k ( w , s , \{ \chi_j \}_{ j = 1 }^2 ) $ converges absolutely and uniformly on any compact subset of $ \{ ( w , z ) \in \mathbb{ C }^2 \ | \ \Re ( s ) > \beta_k \} $, where
\begin{align*}
\beta_k = \begin{cases}
1 & ( k = 1 ), \\
2 & ( k = 2 , 6 ), \\
1 - \alpha & ( k = 3 , 4 , 7 ), \\[+10pt]
\max \left\{ \frac{ 1 - \chi_1 ( -1 ) }{ 2 } , \frac{ 1 - \chi_2 ( - 1 ) }{ 2 } \right\} & ( k = 5 ), \\[+10pt]
\frac{ 3 }{ 2 } + \tautz & ( k = 8 ), \\[+10pt]
\frac{ 3 }{ 2 } - \tautz & ( k = 9 ), \\[+10pt]
\frac{ 3 }{ 2 } & ( k = 10 ).
\end{cases}
\end{align*}
\end{lem}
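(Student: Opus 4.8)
The plan is to estimate each of the ten series separately, bounding the general summand in absolute value and comparing it with one of the convergent series supplied by Lemma~\ref{Lem2.7}; because every character satisfies $|\chi_j(p^m)|\le 1$, the characters contribute only bounded factors and never obstruct convergence, so the resulting bounds are automatically uniform on compact subsets of each region $\{\Re(s)>\beta_k\}$. First I would dispose of the plain single prime-power sums $E_1$, $E_8$, $E_9$, $E_{10}$ and the elementary pieces of $E_7$, each of which has the shape $\sum_{p,m}(\mathrm{bounded})\,p^{-m\sigma_0}(m\log p)^A\log p$ with effective abscissa $\sigma_0$ equal to $\Re(s)$, $\Re(s)-\tfrac12-\taucaz$, $\Re(s)-\tfrac12+\taucaz$, $\Re(s)-\tfrac12$ and $\Re(s)+\alpha$ respectively. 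Whenever $\Re(s)>\beta_k$ one has $\sigma_0>1$, so picking $\delta>0$ with $1+\delta<\sigma_0$ and invoking Lemma~\ref{Lem2.7}~(iii) yields absolute, locally uniform convergence; reading off the binding value of $\sigma_0$ over $(a,b)$ reproduces $\beta_8=\tfrac32+\taucaz$, $\beta_9=\tfrac32-\taucaz$ and $\beta_{10}=\tfrac32$. The series $E_5$ is of the same kind once one notes that $1/\sin(im\log p)=1/(i\sinh(m\log p))$ is bounded by $C\,p^{-m}$ for all $p\ge 2$, $m\ge 1$; this raises the effective exponent by one, so the relevant abscissa is $\Re(s)-\tfrac{1-\chi_b(-1)}2+1>1$, i.e. $\Re(s)>\tfrac{1-\chi_b(-1)}2$, again handled by Lemma~\ref{Lem2.7}~(iii).

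Next come the double sums $E_3$ and $E_2$. In $E_3$ the denominator $m\log p+n\log q\ge\log 2$ stays bounded away from zero because both terms are positive, so the series factors as a product of two sums of the type already treated, each convergent for $\Re(s)+\alpha>1$; this gives $\beta_3=1-\alpha$. The sum $E_2$ is the one genuinely delicate step, and I expect it to be the main obstacle: the denominator $m\log p-n\log q$ can be arbitrarily small near the diagonal $p^m\approx q^n$. Here the restriction $p^m\ne q^n$ together with Lemma~\ref{Lem2.7}~(i) is decisive. Assuming for instance $p^m>q^n$, integrality forces $p^m-q^n\ge 1$, whence $m\log p-n\log q=\log(p^m/q^n)\ge (p^m-q^n)/p^m\ge p^{-m}$, and symmetrically when $q^n>p^m$; thus $|m\log p-n\log q|^{-1}\le\max\{p^m,q^n\}$. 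This growth is absorbed by the decay $p^{-m(s-1)}q^{-n}$ as soon as $\Re(s)>2$, giving $\beta_2=2$.

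For $E_4$ the denominator $im\log p-n\pi$ has modulus at least $n\pi$, so the $n$-summation is controlled by $\sum_n n^{-2}<\infty$ while the $(p,m)$-summation is handled by Lemma~\ref{Lem2.7}~(ii)--(iii) once $\Re(s)+\alpha>1$, so $\beta_4=1-\alpha$. The extra integral in $E_7$ is seen to be finite and to grow at most logarithmically in $m\log p$ by inspecting its behaviour at $u=0$, where $1/(e^u-1)\sim 1/u$ cancels against the numerator, and at $u=\infty$, where $1/(e^u-1)$ decays exponentially; hence it does not affect the abscissa $\beta_7=1-\alpha$. Finally $E_6$ is a contour integral over the \emph{compact} arc $S^{(2)}$: the choice $\varepsilon_j<\min\{\taucjm,\taucjbarm\}$ keeps $S^{(2)}$ clear of the zeros of $L(\,\cdot\,,\chi_b)$, so $\log L(u,\chi_b)$ is bounded there, while $\sup_{u\in S^{(2)}}\Re(u)=1$ forces $\Re(s)>2$ for the inner prime sum $\sum_{p,m}p^{-m(s-u)}\log p$ to converge uniformly in $u$; integrating over the compact contour then gives $\beta_6=2$. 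Apart from the small-denominator phenomenon in $E_2$, where Lemma~\ref{Lem2.7}~(i) is indispensable, every estimate is a routine, character-insensitive adaptation of Akatsuka's argument in \cite{Aka2}.
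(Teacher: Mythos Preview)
Your treatment of $E_1$, $E_3$--$E_{10}$ is correct and essentially matches the paper's (your handling of $E_3$, using only $m\log p+n\log q\ge\log 2$, is in fact a bit more economical than the case split in the paper). The genuine gap is in $E_2$, which you rightly flag as the delicate case but do not actually control: the estimate $|m\log p-n\log q|^{-1}\le\max\{p^m,q^n\}$ is too crude to yield $\beta_2=2$.

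In the region $q^n>p^m$ your bound reads $|m\log p-n\log q|^{-1}\le q^n$, which exactly cancels the factor $q^{-n}$ in the summand and leaves
\[
\frac{p^{-m(\Re(s)-1)}(m\log p)^{\Re(w)}\log p}{n},
\]
a quantity with no dependence on $q$; summing over the infinitely many primes $q$ with $q^n>p^m$ already diverges for every fixed $p,m,n$. Even in the region $q^n<p^m$ your bound $\le p^m$ gives, after summing the factor $q^{-n}/n$ over $q^n<p^m$ (which is $O(\log\log p^m)$),
\[
\sum_{p,m}p^{-m(\Re(s)-2)}(m\log p)^{\Re(w)}(\log p)\,O(\log\log p^m),
\]
and $\sum_{p,m}p^{-m\sigma}(m\log p)^A\log p$ diverges for every $\sigma\le 1$; so this piece converges only once $\Re(s)>3$, not $\Re(s)>2$.

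The paper's fix is a three-way split $q^n<p^m$, $p^m<q^n<p^{2m}$, $q^n\ge p^{2m}$. In the last range one has $n\log q-m\log p\ge\tfrac12 n\log q$, which restores a factor $1/(n\log q)$ and makes the $(q,n)$-sum converge via Lemma~\ref{Lem2.7}~(ii). In the first two ranges one keeps the \emph{full} Lemma~\ref{Lem2.7}~(i) bound $|m\log p-n\log q|^{-1}\le\max\{p^m,q^n\}/|p^m-q^n|$ rather than discarding the denominator $|p^m-q^n|$, and then the partial-fraction identity
\[
\frac{p^m}{l(p^m-l)}=\frac{1}{l}+\frac{1}{p^m-l}
\]
(summed over integers $l<p^m$) converts the inner sum into $O(m\log p)$ rather than $O(p^m)$. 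That refinement is exactly what brings $\beta_2$ down to $2$.
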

\begin{proof}[\underline{\textbf{Proof of Lemma \ref{Lem6.3}}}]
The desired results follow from Lemma \ref{Lem2.7} (iii) immediately except for $ E_2 ( w , s , \{ \chi_j \}_{ j = 1 }^2 ) $, $ \ E_3 ( w , s , \{ \chi_j \}_{ j = 1 }^2 ) $ and $ \ E_4 ( w , s , \{ \chi_j \}_{ j = 1 }^2 ) $.

Concerning $ E_4 ( w , s , \{ \chi_j \}_{ j = 1 }^2 ) $, we can easily prove its absolute and locally uniform convergence by Lemma \ref{Lem2.7} (iii).

We consider $ E_3 ( w , s , \{ \chi_j \}_{ j = 1 }^2 ) $. Let $ ( w , s ) \in \mathbb{ C }^2 $ satisfy $ \Re ( s ) > 1 - \alpha + \delta $ and $ A \le \Re ( w ) \le B $ for any fixed real numbers $ \varepsilon $, $ A $ and $ B $ with $ \delta > 0 $ and $ A < B $. Then, for any prime numbers $ p , q $ and any $ m , n \in \mathbb{ Z }_{ \ge 1 } $ we have
\begin{align*}
& \left| \frac{ \chi_a ( p^m ) \bar{ \chi_b } ( q^n ) p^{ - m( s + \alpha ) } q^{ - n } ( m \log p )^w \log p }{ n ( m \log p + n \log q ) } \right| \\
& \le \begin{cases}
\dsp{ \frac{ 2^{ - ( 1 + \delta ) } q^{ - n } ( \log 2 )^{ A + 1 } }{ n^2 \log q } } & ( p = 2 , m = 1 ), \\[+15pt]
\dsp{ \frac{ p^{ - m ( 1 + \delta ) } q^{ - n } ( m \log p )^B \log p }{ n^2 \log q } } & ( \mbox{otherwise} ),
\end{cases}
\end{align*}
where $ ( a , b ) \in \{ ( 1 , 2 ) , ( 2 , 1 ) \} $. From Lemma \ref{Lem2.7} (ii) we have
\begin{align*}
\sum_{ q , n } \frac{ 2^{ - ( 1 + \delta ) } q^{ - n } ( \log 2 )^{ A + 1 } }{ n^2 \log q } < \infty.
\end{align*}
From Lemma \ref{Lem2.7} (ii), (iii) we have
\begin{align*}
& \sum_{ \scriptsize{ \begin{array}{c}
p , m , q , n \\
p^m \ge 3
\end{array} } } \frac{ p^{ - m ( 1 + \varepsilon ) } q^{ - n } ( m \log p )^B \log p }{ n^2 \log q } \\
& \le \left( \sum_{ p , m } p^{ - m ( 1 + \varepsilon ) } ( m \log p )^B \log p \right) \left( \sum_{ q , n } \frac{ q^{ - n } }{ n^2 \log q } \right) \\
& < \infty.
\end{align*}
Hence, we find that $ E_3 ( w , s , \{ \chi_j \}_{ j = 1 }^2 ) $ converges absolutely and uniformly on any compact subset of $ \{ ( w , s ) \in \mathbb{ C }^2 \ | \ \Re ( s ) >1 - \alpha \} $.

We consider $ E_2 ( w , s , \{ \chi_j \}_{ j = 1 }^2 ) $. Let $ ( w , s ) \in \mathbb{ C }^2 $ satisfy $ \Re ( s ) > 2 + \delta $ and $ A \le \Re ( w ) \le B $ for any fixed real numbers $ \delta $, $ A $ and $ B $ with $ \delta > 0 $ and $ A < B $. Then, for any prime numbers $ p , q $ and any $ m , n \in \mathbb{ Z }_{ \ge 1 } $ we have
\begin{align*}
& \left| \frac{ \chi_a ( p^m ) \chi_b ( q^n ) p^{ - m ( s - 1 ) } q^{ - n } ( m \log p )^w \log p }{ n ( m \log p - n \log q ) } \right| \\
& \le \begin{cases}
\dsp{ \frac{ 2^{ - ( 1 + \delta ) } q^{ - n } ( \log 2 )^{ A + 1 } }{ n ( n \log q - \log 2 ) } } & ( p = 2 , m = 1 ), \\[+15pt]
\dsp{ \frac{ p^{ - m ( 1 + \delta ) } q^{ - n } ( m \log p )^B \log p }{ n | m \log p - n \log q | } } & ( \mbox{otherwise} ),
\end{cases}
\end{align*}
where $ ( a , b ) \in \{ ( 1 , 2 ) , ( 2 , 1 ) \} $. In the case of $ ( p , m ) = ( 2 , 1 ) $, from $ \log x - \log 2 \ge \left( 1 - \frac{ \log 2 }{ \log 3 } \right) \log x $ for any $ x \in \mathbb{ R }_{ \ge 3 } $ and Lemma \ref{Lem2.7} (ii) it follows that
\begin{align*}
\sum_{ \scriptsize{ \begin{array}{c}
q , n \\
q^n \ge 3
\end{array} } } \frac{ 2^{ - ( 1 + \delta ) } q^{ - n } ( \log 2 )^{ A + 1 } }{ n ( n \log q - \log 2 ) } \le \frac{ 2^{ - ( 1 + \delta ) } ( \log 2 )^{ A + 1 } }{ 1 - \frac{ \log 2 }{ \log 3 } } \dsp{ \sum_{ q , n } \frac{ q^{ - n } }{ n^2 \log q } } < \infty.
\end{align*}
In the case of $ ( p , m ) \not = ( 2 , 1 ) $, we have
\begin{align}
& \sum_{ \scriptsize{ \begin{array}{c}
p , m , q , n \\
p^m \ge 3 \\
q^n \not = p^m
\end{array} } } \frac{ p^{ - m ( 1 + \delta ) } q^{ - n } ( m \log p )^B \log p }{ n | m \log p - n \log q | } \notag \\
& \le \sum_{ \scriptsize{ \begin{array}{c}
p , m , q , n \\
q^n \not = p^m
\end{array} } } \frac{ p^{ - m ( 1 + \delta ) } q^{ - n } ( m \log p )^B \log p }{ n | m \log p - n \log q | } \notag \\
& = \sum_{ \scriptsize{ \begin{array}{c}
p , m , q , n \\
q^n < p^m
\end{array} } } + \sum_{ \scriptsize{ \begin{array}{c}
p , m , q , n \\
p^m < q^n < p^{ 2 m }
\end{array} } } + \sum_{ \scriptsize{ \begin{array}{c}
p , m , q , n \\
q^n \ge p^{ 2 m }
\end{array} } }. \label{PofLem6.3-1}
\end{align}
Concerning the third term of $ ( \ref{PofLem6.3-1} ) $, we have $ n \log q - m \log p \ge \frac{ n \log q }{ 2 } $ because $ 2 m \log p \le n \log q $. Therefore, from Lemma \ref{Lem2.7} (ii), (iii) we have
\begin{align}
( \mbox{the third term of } ( \ref{PofLem6.3-1} ) ) \le 2 \left( \sum_{ p , m } p^{ - m ( 1 + \delta ) } ( m \log p )^B \log p \right) \left( \sum_{ q , n } \frac{ q^{ - n } }{ n^2 \log q } \right) < \infty. \label{PofLem6.3-2}
\end{align}
Concerning the second term of $ ( \ref{PofLem6.3-1} ) $, from Lemma \ref{Lem2.7} (i) we have
\begin{align*}
\sum_{ \scriptsize{ \begin{array}{c}
q , n \\
p^m < q^n <p^{ 2 m }
\end{array} } } \frac{ q^{ - n } }{ n | m \log p - n \log q | } & \le \sum_{ \scriptsize{ \begin{array}{c}
q , n \\
p^m < q^n < p^{ 2 m }
\end{array} } } q^{ - n } \frac{ q^n }{ q^n - p^m } \\
& \le \sum_{ l = 1 }^{ p^{ 2 m } - p^m - 1 } \frac{ 1 }{ ( p^m + l ) - p^m } \ll m \log p.
\end{align*}
Hence, from Lemma \ref{Lem2.7} (iii) we find
\begin{align}
( \mbox{the second term of }( \ref{PofLem6.3-1} ) ) \ll \sum_{ p , m } p^{ m ( 1 + \delta ) } ( m \log p )^{ B + 1 } \log p < \infty. \label{PofLem6.3-3}
\end{align}
Concerning the first term of $ ( \ref{PofLem6.3-1} ) $, from Lemma \ref{Lem2.7} (i) we have
\begin{align*}
& \sum_{ \scriptsize{ \begin{array}{c}
q , n \\
q^n < p^m
\end{array} } } \frac{ q^{ - n } }{ n | m \log p - n \log q | } \\
& \le \sum_{ \scriptsize{ \begin{array}{c}
q , n \\
q^n < p^m
\end{array} } } q^{ - n } \frac{ p^m }{ p^m - q^n } \le p^m \sum_{ l = 1 }^{ p^m - 1 } \frac{ 1 }{ l ( p^m - l ) } = \sum_{ l = 1 }^{ p^m - 1 } \frac{ 1 }{ l } + \sum_{ l  = 1 }^{ p^m - 1 } \frac{ 1 }{ p^m - l } \\
& \ll m \log p.
\end{align*}
Hence, from Lemma \ref{Lem2.7} (iii) we find
\begin{align}
( \mbox{the first term of } \ref{PofLem6.3-1} ) \ll \dsp{ \sum_{ p , m } p^{ - m ( 1 + \delta ) } ( m \log p )^{ B + 1 } \log p } < \infty. \label{PofLem6.3-4}
\end{align}
From $ ( \ref{PofLem6.3-2} ) $, $ ( \ref{PofLem6.3-3} ) $ and $ ( \ref{PofLem6.3-4} ) $ it follows that $ ( \ref{PofLem6.3-1} ) $ converges. This completes the proof.
\end{proof}
From Lemma \ref{Lem6.1}, Lemma \ref{Lem6.2} and Lemma \ref{Lem6.3} we derive the ``key equation'' for $ r = 2 $.
\begin{thm} \label{Thm6.4}
If $ ( 2 \tautz + 1 ) \tan \theta^{ ( 2 ) } < \Re ( s ) \tan \theta^{ ( 2 ) } - \Im ( s ) < 2 \tan \theta^{ ( 2 ) } $, $ \Re ( s ) > 2 ( 1 + \varepsilon^{ ( 2 ) } ) $ and $ \Re ( w ) > 2 $ then the following equation holds :
\begin{align*}
& - \sum_{ \Im ( \rhoco ) , \Im ( \rhoct ) < 0 } \frac{ 1 }{ ( s - \rhoco - \rhoct )^w } + \sum_{ \Im ( \rhoco ) , \Im ( \rhoct ) > 0 } \frac{ 1 }{ ( s - \rhoco - \rhoct )^w } \\
& + \sum_{ ( a , b ) \in \{ ( 1 , 2 ) , ( 2 , 1 ) \} } \left( \sum_{ \Im ( \rhoca ) > 0 } \sum_{ n = 1 }^\infty \frac{ 1 }{ \left( s - \rhoca + 2n - \frac{ 3 + \chi_b ( -1 ) }{ 2 } \right)^w } \right. \\
& + \sum_{ \Im ( \rhocb ) > 0 } \frac{ \mutauaz }{ \left( s - \rhocb - \frac{ 1 }{ 2 } - \taucaz \right)^w } + \sum_{ \Im ( \rhocb ) > 0 } \frac{ \mutauaz }{ \left( s - \rhocb - \frac{ 1 }{ 2 } + \taucaz \right)^w }  \\
& + \sum_{ \Im ( \rhocb ) > 0 } \frac{ \muaz }{ \left( s - \rhocb - \frac{ 1 }{ 2 } \right)^w } + \sum_{ n = 1 }^\infty \frac{ \mutauaz }{ \left( s + 2 n - 2 - \frac{ \chi_b ( -1 ) }{ 2 } - \taucaz \right)^w } \\
& + \left. \sum_{ n = 1 }^\infty \frac{ \mutauaz }{ \left( s + 2n - 2 - \frac{ \chi_b ( -1 ) }{ 2 } + \taucaz \right)^w } + \sum_{ n = 1 }^\infty \frac{ \muaz }{ \left( s + 2n - 2 - \frac{ \chi_b ( -1 ) }{ 2 } \right)^w } \right) \\
&+ \sum_{ n_1 = 1 }^\infty \sum_{ n_2 = 1 }^\infty \frac{ 1 }{ \left( s + 2 n_1 + 2 n_2 - 3 - \frac{ \chi_1 ( -1 ) + \chi_2 ( - 1 )  }{ 2 } \right)^w } \\
& + \frac{ \mutauoz \mutautz }{ \left( s - 1 - \taucoz - \tauctz \right)^w } + \frac{ \mutauoz \mutautz }{ \left( s - 1 - \taucoz + \tauctz \right)^w } \\
& + \frac{ \mutauoz \mutautz }{ \left( s - 1 + \taucoz - \tauctz \right)^w } + \frac{ \mutauoz \mutautz }{ \left( s - 1 + \taucoz + \tauctz \right)^w } \\
& + \sum_{ ( a , b ) \in \{ ( 1 , 2 ) , ( 2 , 1 ) \} } \left( \frac{ \mutauaz \mubz }{ \left( s - 1 - \taucaz \right)^w } + \frac{ \mutauaz \mubz }{ \left( s - 1 + \taucaz \right)^w } \right) + \frac{ \muoz \mutz }{ ( s - 1 )^w } \\
& = - \frac{ 1 }{ \Gamma ( w ) } \sum_{ k = 1 }^{ 10 } E_k ( w , s , \{ \chi_j \}_{ j = 1 }^2 ).
\end{align*}
\end{thm}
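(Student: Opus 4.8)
The plan is to specialize the ``key equation'' of Theorem~\ref{Thm4.1} to $r=2$ with $z=-i(s-1)$ and then read off the two sides from the closed forms already prepared in Lemmas~\ref{Lem6.1} and \ref{Lem6.2}. First I would check that the hypotheses of Theorem~\ref{Thm4.1} transcribe exactly into those of the present statement. Writing $s=\sigma+iu$ one computes $z=-i(s-1)=u-i(\sigma-1)$, so $\Re(z)=\Im(s)$ and $\Im(z)=1-\Re(s)$; the requirement $\Im(z)<-(\tfrac12+\varepsilon^{(2)})\cdot 2$ becomes $\Re(s)>2(1+\varepsilon^{(2)})$, and the membership $(w,z)\in D_{\theta^{(2)},\tautz}$ unwinds to $(2\tautz+1)\tan\theta^{(2)}<\Re(s)\tan\theta^{(2)}-\Im(s)<2\tan\theta^{(2)}$, matching the hypotheses verbatim. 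With $\Re(w)>2$ the theorem then applies and yields
\begin{align*}
L_{\theta^{(2)}}^{(1)}(w,-i(s-1),\{\chi_j\}_{j=1}^2)+L_{\theta^{(2)}}^{(2)}(w,-i(s-1),\{\chi_j\}_{j=1}^2)=R_{\theta^{(2)}}(w,-i(s-1),\{\chi_j\}_{j=1}^2).
\end{align*}

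Next I would substitute the explicit expressions. By Lemma~\ref{Lem6.1} the left-hand side equals $e^{\pi iw/2}$ times the difference of the ``$\Im<0$'' double sum (from $L^{(1)}$) and the long ``$\Im>0$'' expression (from $L^{(2)}$, which carries an overall minus sign), while by Lemma~\ref{Lem6.2} the right-hand side equals $\frac{e^{\pi iw/2}}{\Gamma(w)}\sum_{k=1}^{10}E_k(w,s,\{\bar{\chi_j}\}_{j=1}^2)$. The absolute and locally uniform convergence asserted in Lemma~\ref{Lem6.1} for the zero-sums, together with the convergence of the $E_k$ established in Lemma~\ref{Lem6.3} on a region containing the one in the statement, lets me treat every series as a genuine analytic function there, so the common nonzero factor $e^{\pi iw/2}$ cancels cleanly. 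Multiplying the resulting identity through by $-1$ reshapes the left-hand side into exactly the combination displayed in the theorem, but written in terms of $\bar{\chi_1},\bar{\chi_2}$ (hence $\rho_{\bar{\chi_j}}$, $\mu_{\bar{\chi_j}}$, $\tau_{\bar{\chi_j}}^{(0)}$), with right-hand side $-\frac{1}{\Gamma(w)}\sum_{k=1}^{10}E_k(w,s,\{\bar{\chi_j}\}_{j=1}^2)$.

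Finally, since every hypothesis and every auxiliary lemma holds for an arbitrary pair of primitive characters, I would apply the identity just obtained with $\bar{\chi_1},\bar{\chi_2}$ in place of $\chi_1,\chi_2$; because $\overline{\bar{\chi_j}}=\chi_j$, this sends $\rho_{\bar{\chi_j}}\mapsto\rho_{\chi_j}$, $\mu_{\bar{\chi_j}}\mapsto\mu_{\chi_j}$, $\tau_{\bar{\chi_j}}^{(0)}\mapsto\tau_{\chi_j}^{(0)}$ and $E_k(w,s,\{\bar{\chi_j}\})\mapsto E_k(w,s,\{\chi_j\})$ throughout, producing the asserted equation exactly. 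This is the same relabeling used at the end of the proof of Theorem~\ref{Thm5.4}. The argument is thus purely organizational, and I expect no genuine analytic obstacle: all the hard estimates have already been absorbed into Lemma~\ref{Lem6.3}. The only points needing care are the bookkeeping of the overall sign and of the $e^{\pi iw/2}$ cancellation, and confirming that the common domain of validity of Lemmas~\ref{Lem6.1}--\ref{Lem6.3} coincides with the region in the statement, which the computation of the first paragraph verifies.
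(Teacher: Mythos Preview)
Your proposal is correct and follows essentially the same route as the paper: set $r=2$, $z=-i(s-1)$ in Theorem~\ref{Thm4.1}, insert the expressions from Lemmas~\ref{Lem6.1} and \ref{Lem6.2}, cancel $e^{\pi i w/2}$, and then relabel $\bar{\chi_j}\mapsto\chi_j$. You have simply made explicit the hypothesis check and the sign/factor bookkeeping that the paper compresses into a single sentence.
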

\begin{proof}[\underline{\textbf{Proof of Theorem \ref{Thm6.4}}}]
We put $ r = 2 $ and $ z = - i ( s - 1 ) $ in Theorem \ref{Thm4.1} and then by applying Lemma \ref{Lem6.1} and Lemma \ref{Lem6.2} and replacing $ \bar{ \chi } $ with $ \chi $ we obtain the desired result.
\end{proof}
\subsection{Proof of Theorem \ref{Thm1.3}}
\begin{proof}
The left-hand side of the formula in Theorem \ref{Thm6.4} is a meromorphic function of $ w $ on the whole $ \mathbb{ C } $ by Corollary \ref{Cor4.3}. Hence, by using the definition of zeta regularized products we have
\begin{align*}
& \exp \left( - \underset{ w = 0 }{ \mathrm{ Res } } \left( \frac{ \mbox{the left-hand side of the formula in Theorem } \ref{Thm6.4} }{ w^2 } \right) \right) \\
& = \Rprod_{ \Im ( \rhoco ) , \Im ( \rhoct ) < 0 } (( s - \rhoco - \rhoct ))^{ - 1 } \Rprod_{ \Im ( \rhoco ) , \Im ( \rhoct ) > 0 } (( s - \rhoco - \rhoct )) \\
& \quad \times \prod_{ ( a , b ) \in \{ ( 1 , 2 ) , ( 2 , 1 ) \} } \left( \Rprod_{ \Im ( \rhoca ) > 0 , n \ge 1 } \left( \left( s - \rhoca + 2n - \frac{ 3 + \chi_b ( - 1 ) }{ 2 } \right) \right) \right. \\
& \phantom{ \quad \quad \times \prod_{ ( a , b ) \in \{ ( 1 , 2 ) , ( 2 , 1 ) \} } } \times \Rprod_{ \Im ( \rhoca ) > 0 } \left( \left( s - \rhoca - \frac{ 1 }{ 2 } - \taucbz \right) \right)^{ \mutaubz } \\
& \phantom{ \quad \quad \times \prod_{ ( a , b ) \in \{ ( 1 , 2 ) , ( 2 , 1 ) \} } } \times \Rprod_{ \Im ( \rhoca ) > 0 } \left( \left( s - \rhoca - \frac{ 1 }{ 2 } + \taucbz \right) \right)^{ \mutaubz } \\
& \phantom{ \quad \quad \times \prod_{ ( a , b ) \in \{ ( 1 , 2 ) , ( 2 , 1 ) \} } } \times \Rprod_{ \Im ( \rhoca ) > 0 } \left( \left( s - \rhoca - \frac{ 1 }{ 2 } \right) \right)^{ \mubz } \\
& \phantom{ \quad \quad \times \prod_{ ( a , b ) \in \{ ( 1 , 2 ) , ( 2 , 1 ) \} } } \times \Rprod_{ n \ge 1 } \left( \left( s + 2n - 2 - \frac{ \chi_a ( - 1 ) }{ 2 } - \taucbz \right) \right)^{ \mutaubz } \\
& \phantom{ \quad \quad \times \prod_{ ( a , b ) \in \{ ( 1 , 2 ) , ( 2 , 1 ) \} } } \times \Rprod_{ n \ge 1 } \left( \left( s + 2n - 2 - \frac{ \chi_a ( - 1 ) }{ 2 } + \taucbz \right) \right)^{ \mutaubz } \\
& \phantom{ \quad \quad \times \prod_{ ( a , b ) \in \{ ( 1 , 2 ) , ( 2 , 1 ) \} } } \times \left. \Rprod_{ n \ge 1 } \left( \left( s + 2n - 2 - \frac{ \chi_a ( - 1 ) }{ 2 } \right) \right)^{ \mubz } \right) \\
& \quad \times \Rprod_{ n_1 , n_2 \ge 1 } \left( \left( s + 2 n_1 + 2 n_2 - 3 - \frac{ \chi_1 ( -1 ) + \chi_2 (-1) }{ 2 } \right) \right) \\
& \quad \times \left( ( s - 1 - \taucoz - \tauctz ) ( s - 1 - \taucoz + \tauctz ) \right. \\
& \quad \quad \times \left. ( s - 1 + \taucoz - \tauctz ) ( s - 1 + \taucoz + \tauctz ) \right)^{ \mutauoz \mutautz } \\
& \quad \times \prod_{ ( a , b ) \in \{ ( 1 , 2 ) , ( 2 , 1 ) \} } \left( ( s - 1 - \taucaz )( s - 1 + \taucaz ) \right)^{ \mutauaz \mubz } \\
& \quad \times ( s - 1 )^{ \muoz \mutz } \\
& = ( L_{ \chi_1 } \underset{ \mathbb{ F }_1 }{ \otimes } L_{ \chi_2 } ) ( s ).
\end{align*}

On the other hand, by Theorem \ref{Thm6.4} and noting that $ \frac{ 1 }{ \Gamma ( w ) } = w + O ( w^2 ) \ ( w \to 0 ) $, we have
\begin{align*}
& \exp \left( - \underset{ w = 0 }{ \mathrm{ Res } } \left( \frac{ \mbox{the right-hand side of the formula in Theorem } \ref{Thm6.4} }{ w^2 } \right) \right) \\
& = \exp \left( \sum_{ k = 1 }^{ 10 } E_k ( 0 , s , \{ \chi_j \}_{ j = 1 }^2 ) \right)
\end{align*}
for $ \Re ( s ) > 2 $. This completes the proof.
\end{proof}

\end{document}